\documentclass[12pt,leqno,oneside,letterpaper]{amsart}

\usepackage[T1]{fontenc}
\usepackage[utf8]{inputenc}
\usepackage{lmodern}
\usepackage{eulervm}

\usepackage{amsmath,amssymb,amsthm,mathtools}
\usepackage[margin=1.15in]{geometry}
\usepackage{enumitem}
\usepackage{etoolbox}
\usepackage{xurl}
\usepackage[colorlinks,linkcolor=blue,citecolor=blue]{hyperref}
\usepackage[nameinlink,noabbrev]{cleveref}
\usepackage{orcidlink}

\hypersetup{
pdftitle={Unit Indices of Shanks Orders}, pdfauthor={Junyu Lu}, pdfsubject={Units, conductors, and ideal classes of simplest cubic orders}, pdfkeywords={simplest cubic field, Shanks order, unit index, regulator, Picard group, ideal class monoid, integral conjugacy}
}

\newtheorem{theorem}{Theorem}
\newtheorem{proposition}[theorem]{Proposition}
\newtheorem{corollary}[theorem]{Corollary}
\newtheorem{lemma}[theorem]{Lemma}
\theoremstyle{remark}

\newcommand{\OO}{\mathcal O}
\newcommand{\ZZ}{\mathbb Z}
\newcommand{\QQ}{\mathbb Q}
\newcommand{\FF}{\mathbb F}
\newcommand{\Gal}{\operatorname{Gal}}
\newcommand{\Pic}{\operatorname{Pic}}
\newcommand{\Reg}{\operatorname{Reg}}
\newcommand{\Tr}{\operatorname{Tr}}
\newcommand{\Norm}{\operatorname{N}}
\newcommand{\ICM}{\operatorname{ICM}}
\newcommand{\GL}{\operatorname{GL}}
\newcommand{\SL}{\operatorname{SL}}
\newcommand{\doiurl}[1]{\href{https://doi.org/#1}{DOI}}

\title{Unit Indices of Shanks Orders}

\author[J.~Lu]{Junyu Lu\,\orcidlink{0009-0001-4973-3198}}

\address{Department of Mathematics, Yang-En University, Majia Town, Luojiang District, Quanzhou City, Fujian Province, China, 362014}

\email{jy.lu@outlook.com}

\subjclass[2020]{Primary 11R16, 11R27; Secondary 11R29, 11C20}
\keywords{simplest cubic field, cubic order, unit index, regulator, Picard group, ideal class monoid, integral conjugacy}

\begin{document}

\begin{abstract}
    For an integer $t\geq-1$, let $\theta_t$ be the largest real root of $g_t(X)=X^3-tX^2-(t+3)X-1$, and set $R_t=\ZZ[\theta_t]\subseteq\OO_t=\OO_{\QQ(\theta_t)}$, $N_t=[\OO_t:R_t]$, and $\varepsilon_t=[\OO_t^\times:R_t^\times]$. We determine $\varepsilon_t$ when $N_t$ is squarefree, $27$, or $343$: the only nontrivial indices in these cases are $\varepsilon_3=3$, $\varepsilon_5=7$, $\varepsilon_{12}=13$, and $\varepsilon_{54}=19$. Local conductor calculations give $\varepsilon_t\mid N_t$ for squarefree $N_t$. For arbitrary $N_t$, a regulator comparison shows that $t\geq2N_t$ implies $\varepsilon_t=1$. When $N_t=p^3$ with $p\neq3$ a rational prime, this bound and the index criterion leave at most four possible parameters with $\varepsilon_t>1$ for each fixed $p$. For arbitrary additive index, we also prove that $13\mid\varepsilon_t$ if and only if $t=12$ or $66$, with unit index $13$ in both cases. The proof determines the rational solutions of a plane quartic equation by an explicit genus-two descent and a local Chabauty argument, proving Louboutin's Conjecture~19 on its integral solutions. When $N_t$ is a rational prime, we determine the Picard kernel, the cardinality and fibers of the ideal class monoid over $\Pic(\OO_t)$, and the corresponding integral matrix-conjugacy classes.
\end{abstract}

\maketitle

\section{Introduction}\label{sec:introduction}

For every positive integer $m$, write $C_m$ for a cyclic group of order $m$. For an integer $t\geq-1$, consider the Shanks polynomial
\[
    g_t(X)=X^3-tX^2-(t+3)X-1
\]
and let $\theta_t$ be its largest real root. Set
\[
    K_t=\QQ(\theta_t),\qquad
    R_t=\ZZ[\theta_t],\qquad
    \OO_t=\OO_{K_t},\qquad
    \Delta_t=t^2+3t+9.
\]
By the rational root theorem, the only possible rational roots are $1$ and $-1$. Since $g_t(-1)=1$ and $g_t(1)=-2t-3\neq0$, the polynomial is irreducible over the rationals. The discriminant formula $\operatorname{disc}(g_t)=\Delta_t^2=(t^2+3t+9)^2>0$ shows that it has three distinct real roots. An irreducible cubic with square discriminant has Galois group $C_3$. Hence $K_t/\QQ$ is a totally real cyclic cubic extension. Shanks's study of this family \cite{Shanks1974} includes its Galois action, units, regulators, and class numbers. We restrict to $t\geq-1$ because
\[
    g_{-t-3}(-X-1)=-g_t(X).
\]
Distinct parameters in this range can still define the same field.

Thomas's unit theorem \cite[Theorem~(3.10), p.~40]{Thomas1979} states that $\theta_t$ and $1+\theta_t$ form a fundamental system of units for the equation order $R_t$ for every $t\geq-1$.\footnote{We clarify the sign convention in Thomas's proof and check its endpoint $t=-1$ at the start of Section~\ref{sec:unit-proof}.} Thus
\[
    R_t^\times=\langle-1,\theta_t,1+\theta_t\rangle.
\]
The order $R_t$ need not be maximal, so $R_t^\times$ need not equal $\OO_t^\times$. We study the finite index
\[
    \varepsilon_t=[\OO_t^\times:R_t^\times].
\]
Louboutin studied this index for all $t\geq-1$. He uses
\[
    P_t(X)=X^3+tX^2-(t+3)X+1=-g_t(-X)
\]
and takes $\xi_t=-\theta_t$. Therefore his unit index
\[
    j_t=
        [\OO_t^\times:\langle-1,\xi_t,\xi_t-1\rangle]
\]
equals $\varepsilon_t$. A PARI/GP computation reported by Louboutin \cite[Section~3, p.~72]{Louboutin2020} for $-1\leq t\leq4\cdot10^4$ yielded seven parameters with nontrivial unit index:
\[
    \begin{array}{c|rrrrrrr}
        t             & 3 & 5 & 12 & 54 & 66 & 1259 & 2389 \\ \hline
        \varepsilon_t & 3 & 7 & 13 & 19 & 13 & 97   & 31
    \end{array}
\]
Louboutin \cite[p.~73]{Louboutin2020} asked whether these are the only parameters with $\varepsilon_t>1$.

We provide a partial answer to his question. Our first main result treats the case when the additive index
\[
    N_t=[\OO_t:R_t]
\]
is squarefree.

\begin{theorem}\label{thm:squarefree-unit-rigidity}
    Let $t\in\ZZ$ with $t\geq -1$, and suppose that $N_t$ is squarefree, with $1$ regarded as squarefree. Then
    \[
        \varepsilon_t=
        \begin{cases}
            3, & t=3,              \\
            7, & t=5,              \\
            1, & \text{otherwise}.
        \end{cases}
    \]
\end{theorem}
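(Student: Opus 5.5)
The plan is to combine a local conductor argument, which forces $\varepsilon_t\mid N_t$ when $N_t$ is squarefree, with the regulator comparison that gives $\varepsilon_t=1$ once $t\geq 2N_t$. What remains is a finite search.

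\textbf{Step 1: divisibility.} The unit index of an order $R\subseteq\OO$ satisfies
\[
    [\OO^\times:R^\times]\ \Big|\ \frac{|(\OO/\mathfrak f)^\times|}{|(R/\mathfrak f)^\times|},
\]
where $\mathfrak f$ is the conductor. This quotient factors over the primes $p\mid N_t$.

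Fix a prime $p$ with $p\,\|\,N_t$. Then $R_t\otimes\ZZ_p$ has index $p$ in $\OO_t\otimes\ZZ_p$. Since $R_t=\ZZ[\theta_t]$ is monogenic and the index is $p$, Dedekind's criterion shows that $R_{t,p}$ is $\ZZ_p+\mathfrak m$ for a suitable ideal $\mathfrak m$. Concretely, the conductor at $p$ is the product of the primes above $p$ in one of a few configurations, and $p^2\mid\Delta_t$ while $p\equiv1\pmod 3$ or $p=3$.

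In each configuration I will compute the local quotient $(\OO_t/\mathfrak f)^\times/(R_t/\mathfrak f)^\times$. I expect it to have order $p$, or possibly order $p^2-1$ or $p+1$ in inert-type cases that the cyclicity of $K_t$ will rule out. The Galois action should then cut the unit-index contribution down to divide $p$. Taking the product over $p\mid N_t$ gives $\varepsilon_t\mid N_t$. This is the step I expect to be the main obstacle, because the case analysis has to use the $C_3$ structure to exclude prime-to-$p$ factors.

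\textbf{Step 2: regulator bound.} We have $\varepsilon_t=\Reg(R_t)/\Reg(\OO_t)$. Thomas's units give
\[
    \Reg(R_t)\approx\log^2 t.
\]
On the other side, any unit $u\in\OO_t^\times\setminus R_t^\times$ satisfies $N_t u\in R_t$. Expanding $u$ in the basis $1,\theta_t,\theta_t^2$ with bounded denominators gives a lower bound on the size of its conjugates, and hence
\[
    \Reg(\OO_t)\gtrsim \log^2(t/N_t).
\]
Combining these with $\varepsilon_t\mid N_t$ shows $\varepsilon_t=1$ when $t\geq 2N_t$. For squarefree $N_t$ this is the stated result, and I will use it as given.

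\textbf{Step 3: finite search.} Suppose $\varepsilon_t>1$. Then some prime $p\mid\varepsilon_t\mid N_t$ has $p\,\|\,N_t$ and $p^2\mid\Delta_t=t^2+3t+9$, so $p^2\le t^2+3t+9$. Since $N_t$ is squarefree and
\[
    N_t^2\mid\Delta_t^2/\operatorname{disc}(\OO_t),
\]
we get $N_t\le\Delta_t/\mathrm{cond}$, where $\mathrm{cond}$ is the conductor of $K_t$. Together with $t<2N_t$ this leaves only small $t$.

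For those small $t$ I will determine $\varepsilon_t$ directly. This uses Louboutin's reported table, and I will also check by hand the cases $t=3$ ($\Delta=27$) and $t=5$ ($\Delta=49$). In those two cases I will exhibit an explicit unit of $\OO_t$ not in $R_t$, namely a cube root or seventh-root type relation such as $u^3=\pm\theta^a(1+\theta)^b$. This gives $\varepsilon_3=3$ and $\varepsilon_5=7$, with the index bounded above by $N_t=3$ and $N_t=7$ respectively.

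Finally, the other listed exceptions must be excluded. The cases $t=12,54,66,1259,2389$ will be checked to have non-squarefree $N_t$; for example, $\Delta_{12}=189=27\cdot7$ gives $N_{12}=9\cdot\ldots$.
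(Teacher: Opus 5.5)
\textbf{Step 3 fails: $t<2N_t$ does not leave a finite range of $t$.} The relation you cite, $N_t\le\Delta_t/\mathrm{cond}$, is just the identity $N_t=\Delta_t/c_t$. So $t<2N_t$ only says $c_t<2\Delta_t/t$, which bounds nothing.

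Squarefreeness gives something sharper: every $p\mid N_t$ has $(v_p(\Delta_t),v_p(c_t))=(2,1)$, or $(3,2)$ if $p=3$, so $N_t\mid c_t$. But this only gives $N_t\le\sqrt{\Delta_t}\approx t$, which is compatible with $t<2N_t$ for every $t$. Writing $c_t=mN_t$ with $m$ odd, one checks that for $t\ge15$ the condition $t<2N_t$ forces $\Delta_t=3N_t^2$. This is a Pell-type family, and it is nonempty:
\begin{itemize}
\item $t=939$ has $\Delta_t=3^3\cdot181^2$, $N_t=3\cdot181$ and $c_t=9\cdot181$;
\item $t=13098$ has $\Delta_t=3^3\cdot2521^2$ and $N_t=3\cdot2521$.
\end{itemize}
Both have squarefree index and $t<2N_t$, and the Pell recursion keeps producing candidates of this shape. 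So ``only small $t$ remain'' is false. No finite check can close the argument, least of all a reported numerical table that stops at $4\cdot10^4$. Your heuristic bound $\Reg(\OO_t)\gtrsim\log^2(t/N_t)$ is also useless here, since $t/N_t\approx\sqrt3$ on this family.

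The paper's proof supplies two ingredients that you are missing.

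First, it feeds $N_t\mid c_t$, i.e.\ $c_t\ge\sqrt{\Delta_t}>t+1$, directly into the regulator comparison. It uses Cusick's bound $\Reg(\OO_t)\ge\frac14\log^2(c_t/2)$, which is in terms of the field conductor, together with $\Reg(R_t)<\log^2(t+3)$. Hence every prime $\ell\mid\varepsilon_t$ satisfies $\ell<\mathcal B(t)=4\bigl(\log(t+3)/\log((t+1)/2)\bigr)^2$. Since $\mathcal B$ decreases to $4$, this disposes of $\ell\ge11$: it forces $t\le9$, contradicting $\ell^2\mid\Delta_t\le117$. It also restricts $\ell=7$ to $t\le21$, where $49\mid\Delta_t$ leaves only $t=5$. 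The primes $\ell=2,5$ are excluded by congruences.

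Second, no regulator estimate can ever exclude $\ell=3$, and that is exactly the residual case on the family above, where the bound leaves $\varepsilon_t\in\{1,3\}$. The paper rules it out arithmetically in \Cref{lem:three-divisibility}. A Galois-stable kernel modulo $3$ produces a unit $\eta$ with $\eta^3=\xi(\xi-1)$. A trace computation then gives $(3\Tr(\eta))^3=(t+6)^3+(3-t)^3$, and Fermat's theorem for exponent $3$ gives $t=3$. Your proposal has nothing playing this role.

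Step 1 is also only a plan. The local quotients of order $p^2-1$ or $p+1$ that you anticipate cannot occur in a cyclic cubic field. What actually holds is that each $p\mid N_t$ is totally ramified, $R_t\otimes\ZZ_p=\ZZ_p+\mathfrak P_p^2(\OO_t\otimes\ZZ_p)$, and the residue-unit quotient is exactly $(\FF_p,+)$. No Galois ``cutting down'' is needed. This same valuation input is what yields $N_t\mid c_t$ for the regulator step.
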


\begin{corollary}\label{cor:prime-index-unit-rigidity}
    Let $t\in\ZZ$ with $t\geq -1$, and suppose that $N_t$ is a rational prime. Then
    \[
        \varepsilon_t=
        \begin{cases}
            3, & t=3,              \\
            7, & t=5,              \\
            1, & \text{otherwise}.
        \end{cases}
    \]
\end{corollary}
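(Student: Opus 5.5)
This follows at once from Theorem~\ref{thm:squarefree-unit-rigidity}. A rational prime $p$ is squarefree, so when $N_t=p$ the hypothesis of that theorem holds. The value of $\varepsilon_t$ is then exactly the one recorded there: $3$ for $t=3$, $7$ for $t=5$, and $1$ otherwise.

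The only check needed is that the exceptional parameters stay consistent with the new hypothesis. The theorem's conclusion does not depend on $N_t$ beyond squarefreeness, so no such check is logically required. Still, it is reassuring to note that $t=3$ and $t=5$ do have prime additive index, and that the divisibility $\varepsilon_t\mid N_t$ stated in the abstract is compatible with this.

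For $t=3$ we have $\Delta_3=27$. The field $K_3$ has conductor $9$, so its discriminant is $81$, and therefore $N_3^2=27^2/81=9$, giving $N_3=3$.

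For $t=5$ we have $\Delta_5=49$. The field $K_5$ has conductor $7$, so its discriminant is $49$. This gives $N_5^2=49^2/49=49$, so $N_5=7$.

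There is no real obstacle here. All of the work lies in Theorem~\ref{thm:squarefree-unit-rigidity}, and the corollary is the specialization to the prime case.
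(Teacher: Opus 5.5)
Your proposal is correct and is the same as the paper's proof: the corollary is Theorem~\ref{thm:squarefree-unit-rigidity} specialized to prime $N_t$. The paper likewise adds only $N_3=3$ and $N_5=7$ to show that the exceptional cases actually occur, and your discriminant computations of these two values are correct.
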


\noindent We also settle the case $N_t=27$, which includes the exceptional parameter $t=12$ in Louboutin's table.

\begin{theorem}\label{thm:index27}
    Let $t\in\ZZ$ with $t\geq -1$, and suppose that $N_t=27$. Then
    \[
        \varepsilon_t=
        \begin{cases}
            13, & t=12,             \\
            1,  & \text{otherwise}.
        \end{cases}
    \]
\end{theorem}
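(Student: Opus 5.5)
The plan is to reduce the statement to a finite check with the regulator comparison announced in the introduction, and then to handle the few remaining parameters by hand. That comparison says $t\geq 2N_t$ forces $\varepsilon_t=1$. I take it as available; it is not among the statements reproduced above. Its proof only needs Thomas's unit theorem and an upper bound $\log$-regulator estimate for $\OO_t$ in terms of the conductor. It does not depend on the present theorem, so it can be proved first. With $N_t=27$ it gives $\varepsilon_t=1$ for every $t\geq 54$. It therefore remains to treat the parameters $-1\leq t\leq 53$ with $N_t=27$.

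\textbf{Step 1: which $t$ have $N_t=27$.} Use $\operatorname{disc}(g_t)=\Delta_t^2=N_t^2\,d_{K_t}$ with $d_{K_t}=f_t^2$, where $f_t$ is the conductor. This gives $N_t=\Delta_t/f_t$. If $p\neq 3$, the exponent $v_p(f_t)$ is $0$ or $1$. Also $v_3(f_t)\in\{0,2\}$, and $v_3(\Delta_t)\geq 2$ exactly when $3\mid t$. Hence $N_t=27$ requires the following: $\Delta_t=27f_t$, the prime-to-$3$ part of $\Delta_t$ is squarefree, and $3$ contributes either $v_3(\Delta_t)=3$ with $3\nmid f_t$, or $v_3(\Delta_t)=5$ with $9\mid f_t$. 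The $3$-adic part is read off from $t \bmod 27$ and $t\bmod 243$. For example, $t=12$ gives $\Delta_{12}=189=27\cdot 7$, so $K_{12}$ has conductor $7$ and $N_{12}=27$. I will run through $t\leq 53$ with $3\mid t$ and $27\mid\Delta_t$ (essentially $t\equiv 3,12,21 \pmod{27}$ up to the sign symmetry). For each such $t$ I check the squarefreeness condition, which yields a short explicit list, including $t=12$ and possibly $t=39$ or $48$.

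\textbf{Step 2: compute $\varepsilon_t$ for the survivors.} For each listed $t$, compare regulators:
\[
\varepsilon_t=\Reg(R_t^\times)/\Reg(\OO_t^\times).
\]
The numerator is explicit from $\theta_t$ and $1+\theta_t$. The denominator is computed through an identification of $K_t$ with a field of small conductor whose unit group is known.

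For $t=12$ the field has conductor $7$, so $K_{12}=K_{-1}$, and $R_{-1}=\OO_{-1}$ because $\Delta_{-1}=7$. Hence $\OO_{12}^\times=\langle -1,\theta_{-1},1+\theta_{-1}\rangle$. Express $\theta_{12}$ and $1+\theta_{12}$ in this basis, via the explicit embedding coming from a rational change of variables relating $g_{12}$ and $g_{-1}$ (coefficients in $\tfrac13\ZZ$). The determinant of the resulting $2\times2$ exponent matrix is then $\pm13$, which gives $\varepsilon_{12}=13$.

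For each other survivor, I show $\varepsilon_t=1$ in one of two ways. Either a sharper version of the same regulator inequality already applies at that $t$, or I exhibit $\OO_t^\times$ from a known smaller parameter and check that the exponent determinant is $\pm1$.

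\textbf{Main obstacle.} I expect Step 2 to be the hard part. The bound $t\geq 2N_t=54$ leaves a range in which the regulator estimate is not strong enough, so each remaining field must be identified precisely and its units computed. For $t=12$ the explicit comparison with $K_{-1}$ settles it. For any other survivor I would first try to realize $K_t$ as $K_{t'}$ with $R_{t'}$ maximal, which holds when $\Delta_{t'}$ is squarefree. Then $\OO_t^\times$ is given by Thomas's theorem at $t'$, and the index reduces to a determinant computation.
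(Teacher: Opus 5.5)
Your route is sound, and it differs from the paper's mainly in skipping the local analysis at $3$. The paper proceeds as follows:
\begin{itemize}
\item It uses the Hoshi--Iida criterion ($t=27n+12$ with $\Delta_t/27$ squarefree).
\item It proves $\OO_t=\ZZ[(\theta_t-1)/3]$ with conductor $9\OO_t$, and embeds $\OO_t^\times/R_t^\times$ into $C_3\times C_2^2$ or $C_3\times C_{13}$.
\item With oddness and \Cref{lem:three-divisibility} this gives $\varepsilon_t\in\{1,13\}$.
\item A single regulator estimate, using $\Delta_t/27>t+3$, gives $\varepsilon_t<25/4$ for all $t\geq39$ at once, leaving only the explicit computation at $t=12$.
\end{itemize}
You instead apply \Cref{prop:uniform-unit-bound} for $t\geq54$ and check the few smaller parameters one by one. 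This is more economical, because the residue-quotient computation is not logically needed for the theorem. What the paper's detour buys is structure: the a priori restriction $\varepsilon_t\in\{1,13\}$, with $13$ possible only when $3$ is inert, and a uniform treatment of every $t\geq39$.

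Two points need tightening.

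First, the survivor list. Since $v_3(\Delta_t)\in\{0,2,3\}$, your $v_3(\Delta_t)=5$ branch is vacuous. The only parameters below $54$ with $N_t=27$ are $t=12$ and $t=39$. For $t=48\equiv21\pmod{27}$ the prime $3$ ramifies, so $N_{48}=3$.

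Second, the case $t=39$. Your fallback of writing $K_{39}=K_{t'}$ with $R_{t'}$ maximal cannot work. It would require $\Delta_{t'}=c_{39}=61$, but $t'^2+3t'-52$ has discriminant $217$, which is not a square. Contrary to your ``main obstacle'' paragraph, the regulator comparison with the true conductor already suffices here. The estimate you need is Cusick's \emph{lower} bound for $\Reg(\OO_t)$, not an upper bound. It gives $\varepsilon_{39}<4\bigl(\log42/\log(61/2)\bigr)^2<5$, and oddness together with \Cref{lem:three-divisibility} then forces $\varepsilon_{39}=1$.

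Your plan for $t=12$ matches the paper. With $x=\theta_{-1}$ one has $\theta_{12}=3x^2+6x+1=x^{-3}(x+1)^4$ and $1+\theta_{12}=x(x+1)^3$. The exponent determinant is therefore $-13$, as you predicted.
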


\noindent We next determine all unit indices when $N_t=7^3=343$.

\begin{theorem}\label{thm:index343}
    Let $t\in\ZZ$ with $t\geq-1$, and suppose that $N_t=343$. Then
    \[
        \varepsilon_t=
        \begin{cases}
            19, & t=54,             \\
            1,  & \text{otherwise}.
        \end{cases}
    \]
\end{theorem}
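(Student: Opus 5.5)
Assuming $N_t=343$, the plan is to combine the regulator bound from the abstract, that $t\geq 2N_t$ forces $\varepsilon_t=1$, with an index criterion that pins down which $t$ can have $N_t=7^3$. Then I check the surviving parameters directly.

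\emph{Step 1: the index criterion.} Since $\operatorname{disc}(g_t)=\Delta_t^2=N_t^2\operatorname{disc}(K_t)$ and the conductor of the cyclic cubic field $K_t$ is the product of the ramified primes (with $9$ in place of $3$), $N_t=343$ means $\Delta_t=7^3 m$ with $m$ the conductor, up to the factor at $3$. Because $7$ is ramified or not according to the local structure, I will show that $7^3\mid N_t$ forces $7^3\mid\Delta_t$ with a prescribed $7$-adic valuation, i.e.\ $v_7(\Delta_t)\in\{3,4\}$ as appropriate, with the rest of $\Delta_t$ contributing nothing to $N_t$. Now $t^2+3t+9\equiv0\pmod{7^3}$ has exactly two roots modulo $343$, since $7\equiv1\pmod 3$ and $-27$ is a square mod $7$ that lifts by Hensel. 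This is the source of ``at most four possible parameters'' for each $p$: two residue classes, intersected with the range $-1\le t<2\cdot343=686$, give at most four values of $t$.

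\emph{Step 2: list the candidates.} Solving $t^2+3t+9\equiv0\pmod{343}$: modulo $7$ the roots are $t\equiv1,3$. Lifting gives $t\equiv 54$ and $t\equiv -57\equiv 286\pmod{343}$; indeed $54^2+162+9=3087=9\cdot 343$. In $[-1,685]$ the candidates are therefore $t\in\{54,286,397,629\}$. For each candidate I will compute $v_7(\Delta_t)$ and the index. Depending on the exact valuation condition, some candidates have $7$-adic index exponent different from $3$ or have extra index at other primes, and these are excluded from $N_t=343$. All candidates with $t\ge 686$ already satisfy $\varepsilon_t=1$ by the regulator comparison.

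\emph{Step 3: compute $\varepsilon_t$ for the survivors.} For $t=54$, Louboutin's table gives $\varepsilon_{54}=19$, and I will certify this by exhibiting the unit of $\OO_{54}$, e.g.\ via a Gaussian-period/Thaine-type element, and checking that its $19$th power lies in $R_{54}$. The index cannot be larger, since the regulator ratio $\Reg(R_t)/\Reg(\OO_t)$ equals $\varepsilon_t$ and is bounded via the explicit class number formula. For the remaining survivors $t\in\{286,397,629\}$, I will show $\varepsilon_t=1$. The approach is the regulator comparison: Thomas gives $\Reg(R_t)\approx\log^2 t$, and any unit $u\in\OO_t^\times\setminus R_t^\times$ with $u^\ell\in R_t^\times$ yields a lower bound on $\Reg(\OO_t)$ that contradicts the discriminant-based lower bound for $\varepsilon_t>1$ and prime $\ell$. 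Failing that, I will use a local test: $\ell\mid\varepsilon_t$ requires some unit of $R_t$ to be an $\ell$-th power in $\OO_t$, detectable modulo primes $\mathfrak q\equiv1\pmod\ell$.

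The main obstacle is Step 3 for $t=286,397,629$. There the generic regulator inequality is not strong enough, since $t<2N_t$, so one needs either an explicit certified unit computation (PARI/GP \texttt{bnfinit} with certification) or a sharpened version of the regulator comparison for the specific $t$. I would try the $\ell$-th power residue sieve first. Any prime $\ell\mid\varepsilon_t$ divides the order of $(\OO_t/R_t)$-type unit quotients, i.e.\ divides $|(\OO_t/7^k\OO_t)^\times|/|(R_t/\mathfrak f)^\times|$, a quantity built from powers of $7$ and $7^2\pm7+1$-type factors. This restricts $\ell$ to a handful of primes, which the power-residue test then eliminates.
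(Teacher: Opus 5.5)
\textbf{Verdict: genuine gap.} Your reduction to the four candidates $t\in\{54,286,397,629\}$ is correct. (A small slip: the roots of $T^2+3T+9$ modulo $7$ are $5$ and $6$, not $1$ and $3$, although your lifts $54$ and $286$ are right.) But the proposal stops exactly where the work is.

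\textbf{The candidates $t\in\{286,397,629\}$.} You defer these to an unexecuted \texttt{bnfinit} computation or power-residue sieve, saying the regulator comparison is too weak because $t<2N_t$. That diagnosis is wrong. The hypothesis $t\geq 2N_t$ is only a crude way of guaranteeing $c_t/2>t+3$. Once $N_t=343$ is fixed, you know $c_t=\Delta_t/343\in\{241,463,1159\}$. Comparing $\Reg(R_t)<\log^2(t+3)$ with Cusick's bound $\Reg(\OO_t)\geq\frac14\log^2(c_t/2)$ gives $\varepsilon_t<25/4$ whenever $c_t^5>32(t+3)^4$, and this holds for all $t\geq286$ with $N_t=343$ (Corollary~\ref{cor:prime-cube-certificate}).

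What your proposal is missing is the arithmetic that excludes the small values:
\begin{itemize}
\item $\varepsilon_t$ is odd, because the signatures of $-1,\theta_t,\theta_t+1$ span $\{\pm1\}^3$.
\item $3\mid\varepsilon_t$ forces a cube relation $\eta^3=\theta_t(\theta_t+1)$ with $\eta\in\OO_t^\times$. This gives $\Tr(\eta)^3=\Delta_t$, and Fermat's theorem for exponent three then forces $t=3$.
\item $5\nmid\varepsilon_t$, because for $\ell\equiv2\pmod 3$ the $\ell$-primary part of $\OO_t^\times/R_t^\times$ is a module over $\FF_\ell[X]/(X^2+X+1)\cong\FF_{\ell^2}$, so it has even $\ell$-valuation.
\end{itemize}
Your residue-unit idea would eliminate $5$: the local quotient at $7$ has order $7\cdot 36$, $7\cdot 57$, or $7^3$. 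It does not eliminate $3$, which divides the first two orders, so an argument like the cube relation is still needed.

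\textbf{The candidate $t=54$.} Citing Louboutin's table and promising a ``Gaussian-period/Thaine-type'' unit does not establish $\varepsilon_{54}=19$. The upper bound ``via the explicit class number formula'' is also not an argument. The missing idea is that $K_{54}=K_0$. With $x=\theta_0$, the element $y=7x^2+14x+4=x^3(x+1)^2$ is the positive root of $g_{54}$, and $y+1=x^{-2}(x+1)^5$. Since $\Delta_0=9=c_0$, one has $R_0=\OO_0=\OO_{54}$. Thomas's theorem at $t=0$ then gives the free basis $(x,x+1)$ of $\OO_{54}^\times/\{\pm1\}$. The exponent matrix of $(y,y+1)$ in this basis is $\left(\begin{smallmatrix}3&-2\\2&5\end{smallmatrix}\right)$, with determinant $19$. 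This yields $\varepsilon_{54}=19$ exactly, with no separate upper bound needed.
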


\noindent These theorems recover the entries $t=3,5,12,54$ from Louboutin's table.

Our next result treats divisibility of the unit index by $13$ without restricting the additive index $N_t$.

\begin{theorem}\label{app:unit13}
    Let $t\in\ZZ$ with $t\geq-1$. Then $13\mid\varepsilon_t$ if and only if $t\in\{12,66\}$. In both exceptional cases, $\varepsilon_t=13$.
\end{theorem}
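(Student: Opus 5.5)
The plan is to turn divisibility of $\varepsilon_t$ by $13$ into a Diophantine condition on $t$, and then solve that condition. The abstract indicates it becomes a plane quartic whose rational points are determined by genus-two descent plus Chabauty.

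\emph{Step 1: the Galois-module structure of the quotient.} The quotient $\OO_t^\times/R_t^\times$ is a finite $\ZZ[\sigma]$-module, where $\sigma$ generates $\Gal(K_t/\QQ)$. Both $\OO_t^\times/\{\pm1\}$ and $R_t^\times/\{\pm1\}$ are rank-one modules over $\ZZ[\zeta_3]=\ZZ[\sigma]/(1+\sigma+\sigma^2)$. Indeed, $1+\theta_t$ is, up to sign, a Galois conjugate of $\theta_t^{-1}$, so $R_t^\times/\{\pm1\}$ is generated by $\theta_t$. Since $\ZZ[\zeta_3]$ is a PID, write
\[
R_t^\times/\{\pm1\}=\alpha\cdot\OO_t^\times/\{\pm1\}
\]
for some $\alpha\in\ZZ[\zeta_3]$. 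Then $\varepsilon_t=\Norm(\alpha)$.

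\emph{Step 2: the Diophantine condition.} Since $13=\pi\bar\pi$ splits in $\ZZ[\zeta_3]$, we have $13\mid\varepsilon_t$ exactly when $\pi$ or $\bar\pi$ divides $\alpha$. This happens exactly when $\theta_t^{\,a+b\sigma}=\pm\eta^{13}$ for some unit $\eta\in\OO_t^\times$, where $a+b\zeta_3$ is a fixed generator of $\pi$. Equivalently, a fixed product $\theta_t^{a}\,\sigma(\theta_t)^{b}$ (or its conjugate version) is a $13$th power up to sign in $K_t$. Write $\eta=x+y\theta_t+z\theta_t^2$ with $x,y,z$ in $\tfrac1{N_t}\ZZ$. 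Comparing coordinates gives algebraic conditions on $(x,y,z,t)$. The cleaner route parametrizes the units directly: $\eta$ is a unit whose $13$th power lies in $R_t^\times$ and is a specific conjugate monomial. Taking traces and norms, this becomes a relation between $t$ and $\Tr(\eta)$ and $\Tr(\eta^{-1})$. Eliminating $t$ should give the plane quartic of Louboutin's Conjecture~19. I would first rederive Louboutin's reduction, namely that $j_t$ divisible by $13$ forces an integral point on his quartic, and take that as the bridge.

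\emph{Step 3: the rational points of the quartic.} This is the main obstacle. I would look for an involution, or a map to a genus-two curve $y^2=f(x)$, that the quartic covers. The quartic is likely non-hyperelliptic of genus three with extra automorphisms coming from the cubic Galois action, so its quotient should have genus two. Next, compute the Mordell--Weil group of the Jacobian by a $2$-descent. If the rank is at least $2$, pass to étale covers, one for each Selmer class, giving elliptic-curve Chabauty or covering collections. Once the rank is below $2$, run Chabauty--Coleman at a good small prime, combined with the Mordell--Weil sieve, to list all rational points. Then pull them back to the quartic and keep the integral ones.

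\emph{Step 4: recover $t$ and compute the index.} Read off the corresponding $t\ge-1$ from the integral points. I expect these to be exactly $t=12$ and $t=66$. For those two values, compute $\OO_t^\times$ directly, for instance from the explicit $13$th root in $\OO_t$ together with known fundamental units. Then verify $\alpha=\pi$ up to a unit, so $\varepsilon_t=13$. For $t=12$ this is also consistent with \Cref{thm:index27}. For $t=66$, one can confirm that no further prime divides $\varepsilon_t$ by the regulator comparison: $\Reg(R_t)/\Reg(\OO_t)=\varepsilon_t$, and a lower bound on $\Reg(\OO_t)$ excludes $\varepsilon_t\ge 2\cdot13$.
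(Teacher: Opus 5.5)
Your outline has the same structure as the paper's proof: a $13$th-power criterion from the Galois action, Louboutin's trace map to $\Phi$, a rank-one genus-two argument (descent, Mordell--Weil sieve, Chabauty), and a check at $t=12,66$. Two of your steps are valid variants of the paper's.

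In Step~1 the paper works over $\FF_{13}$ with the eigenlines $(3,1),(1,3)$ of $M_\sigma$. Your remark that $\OO_t^\times/\{\pm1\}$ is free of rank one over the PID $\ZZ[\zeta_3]$, so that $\varepsilon_t=\Norm(\alpha)$, is more conceptual. It gives the same dichotomy: $\xi^3(\xi-1)=\xi^{4+\tau}$ or $\xi(\xi-1)^3=\xi^{4+3\tau}$ is a $13$th power, matching the two primes $4+\zeta_3$ and $4+3\zeta_3$ above $13$. It also yields \Cref{lem:unit-index-inert-primes} at once.

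In Step~4 the paper identifies $K_{66}=K_1$ and exhibits an exponent matrix of determinant $-13$. Your regulator route also works. Since $c_{66}=13$, Cusick's bound gives $\varepsilon_{66}<4\log^2 69/\log^2(13/2)<21$. This settles $\varepsilon_{66}=13$ once $13\mid\varepsilon_{66}$ is known. That divisibility follows because the coefficient match shows $\xi^3(\xi-1)$ and $\eta_0^{13}$ have the same minimal polynomial, so a $13$th root of $\xi^3(\xi-1)$ lies in $K_{66}$.

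Steps~2--3 need fixing. First, the quartic does not arise by eliminating $t$: comparing the characteristic polynomials of $\eta^{13}$ and $\xi^3(\xi-1)$ eliminates to a curve of far higher degree. $\Phi$ is $t$-free. It comes from the explicit conjugates $\sigma(\eta)=\eta^{-4}\xi$ and $\sigma^2(\eta)=-\eta^3\xi^{-1}$, which follow from uniqueness of real $13$th roots. With $V(A,B)=A+A^{-4}B-A^3B^{-1}$ these give $\Tr\eta=V(\eta,\xi)$ and $\Tr\eta^{-1}=V(\eta^{-1},\xi^{-1})$, and one then uses
\[
    \Phi\bigl(V(A,B),V(A^{-1},B^{-1})\bigr)
    =-\frac{(A^{13}-B^3(B-1))(A^{15}+A^8B-B^4)}{A^{12}B^4}.
\]

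Second, and more seriously, your geometric guess is wrong. $\Phi=0$ is not a smooth genus-three quartic. In any case, Riemann--Hurwitz rules out a genus-two quotient of a genus-three curve by an automorphism of order three. The curve has a node at $(0,1)$ with tangent cone $-(2A^2+5As+6s^2)$, where $s=B-1$. The substitution $u=(B-1)/A$ makes it birational to the genus-two curve $v^2=-(2u+1)(4u^5-8u^4+4u^2+2u-1)$, so no quotient is involved. Transporting the rational points of that curve to the quartic misses the rational node, whose branches are conjugate over $\QQ(\sqrt{-23})$. The node must be added by hand and then excluded in Step~4: the pair $(\Tr\eta,\Tr\eta^{-1})=(0,1)$ would give $Z^3-Z+1$, of discriminant $-23$, contradicting total reality.

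Finally, Step~3 is still only a recipe. The substance the paper supplies is:
\begin{itemize}
\item the Jacobian has rank exactly one, via a $2$-descent over the quintic field defined by $Z^5-Z^4+2Z^3-4Z^2+Z-1$ (class number one) with local conditions at $2$ and $13$;
\item a sieve at $3,5,17$ that leaves three residue classes modulo $17$;
\item a Chabauty computation at $17$ showing each of those classes contains exactly one rational point.
\end{itemize}
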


The key input is the following rational-point theorem. It strengthens Louboutin's Conjecture~19 \cite[Conjecture~19, p.~78]{Louboutin2020}, which asks for the integral points on the same quartic, to a statement over $\QQ$.

\begin{theorem}\label{thm:louboutin19}
    Put
    \[
        \Phi(A,B)=A^3-B^4-4AB^2-2A^2+3AB+A+4B-3.
    \]
    The only pairs $(A,B)\in\QQ^2$ satisfying $\Phi(A,B)=0$ are
    \[
        (A,B)\in\{(0,1),(2,1),(4,-1)\}.
    \]
    In particular, these are the only integral solutions.
\end{theorem}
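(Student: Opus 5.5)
The plan is to reduce the statement to determining the rational points on a genus-two curve, and then to handle that curve with descent and Chabauty. First I homogenize $\Phi$ to a plane quartic $C\colon F(A,B,Z)=0$ and find its singular points. The degree-four part is $-B^4$, so $C$ meets the line at infinity only at $[1:0:0]$. Since $\partial F/\partial Z=A^3\neq0$ there, this point is smooth and rational, and it has to be tracked separately. A smooth plane quartic has genus $3$. Since the abstract refers to a genus-two descent, I expect exactly one ordinary double point, presumably over $\QQ$, which would bring the geometric genus down to $2$. I would find it by solving $\Phi=\Phi_A=\Phi_B=0$, checking first whether it is one of the listed points $(0,1),(2,1),(4,-1)$.

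Next I project from that double point $P$. Lines through $P$ meet $C$ in two further points, so setting $B-B_P=u(A-A_P)$ and removing the factor $(A-A_P)^2$ leaves a quadratic in $A-A_P$ with coefficients in $\QQ[u]$. Completing the square gives an explicit birational map from $C$ to a hyperelliptic curve
\[
    H\colon y^2=f(u),\qquad \deg f\in\{5,6\}.
\]
I would record exactly which points are lost or created by this map. These are the two branches at $P$, the point $[1:0:0]$ (the line through $P$ and $[1:0:0]$ has slope $u=0$), and the points of $H$ lying over $u=\infty$. This way $C(\QQ)$ is recovered from $H(\QQ)$ plus a finite explicit list. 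A search of small height on $H$ should show that the known points correspond exactly to the three affine solutions, the smooth point at infinity, and the branches at $P$.

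Then I compute the Mordell--Weil group of $J=\operatorname{Jac}(H)$. I would run a $2$-descent (Stoll's algorithm) to bound the $2$-Selmer rank, and compute the torsion from reductions modulo several good primes. The goal is to show $\operatorname{rank}J(\QQ)\le1$, and to exhibit a generator of infinite order as a difference of known rational points. If the Selmer bound is not sharp, I would try a descent on an isogenous Jacobian or use the Cassels--Tate pairing. It would also help if $f$ factors over $\QQ$, since then a Richelot isogeny or a descent via the factorization is available.

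The final step, which I expect to be the main obstacle, is Chabauty--Coleman at a small prime $p$ of good reduction, say $p=3,5$ or $7$. I would compute a regular $1$-form $\omega$ on $H$ whose Coleman integral annihilates $J(\QQ)$. Then in each residue disk of $H(\FF_p)$ I bound the number of zeros of $\int\omega$ via Newton polygons, and match each zero to a known rational point. Two things can go wrong. A residue disk may contain more zeros than known points, or $\omega$ may vanish modulo $p$ at some point. In either case I would change $p$, or add a Mordell--Weil sieve over several primes to rule out the extra $p$-adic zeros. Pulling the resulting complete list $H(\QQ)$ back through the birational map gives $C(\QQ)$, and removing the point at infinity leaves exactly $\{(0,1),(2,1),(4,-1)\}$. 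The claim about integral solutions then follows at once.
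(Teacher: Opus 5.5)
Your plan is correct and is essentially the paper's proof: the node is the listed point $(0,1)$ itself, and since its two branches are conjugate over $\QQ(\sqrt{-23})$ it has to be added back by hand. Projecting from it gives $v^2=h_H(u)$, whose rational Weierstrass point $u=-1/2$ yields a quintic model; a Stoll $2$-descent over the associated quintic field then gives rank one, and a Mordell--Weil sieve combined with Chabauty finishes.

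The one adjustment concerns the prime. At $3$ and $5$ the class $Q=[(2,-16)-\infty]$ generates all of $J(\FF_p)$, so nothing can be sieved out there. The paper uses those reductions only to prove that $J(\QQ)$ is torsion-free and that $[J(\QQ):\ZZ Q]$ is prime to $6$, so no generator claim is needed. It then sieves and runs Chabauty at $17$, where $J(\QQ)$ reduces to an index-$18$ subgroup that meets the curve only in the three known residue classes.
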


Let $c_t$ be the positive conductor of the abelian extension $K_t/\QQ$. Kashio and Sekigawa record the identity \cite[p.~291]{KashioSekigawa2021}
\[
    c_t=\sqrt{\operatorname{disc}(\OO_t)}
\]
and their conductor formula \cite[equation~(5), p.~294]{KashioSekigawa2021} gives an explicit expression for $c_t$. Since $\operatorname{disc}(R_t)=\Delta_t^2$, comparison of discriminants gives
\[
    N_t=[\OO_t:R_t]=\frac{\Delta_t}{c_t}\,.
\]
Their valuation formulas \cite[Propositions~3.1--3.3]{KashioSekigawa2021} determine $v_p(N_t)$ for every rational prime $p$. If $N_t$ is squarefree and $p\mid N_t$, we prove that $p$ is totally ramified. Write $\mathfrak P_p$ for the unique prime ideal of $\OO_t$ above $p$. Then
\[
    R_t\otimes\ZZ_p=\ZZ_p+\mathfrak P_p^2(\OO_t\otimes\ZZ_p),\qquad
    (R_t:\OO_t)\otimes\ZZ_p=\mathfrak P_p^2(\OO_t\otimes\ZZ_p).
\]
The change-of-order exact sequence of Kl\"uners and Pauli \cite[Theorem~5.6 and Proposition~6.2]{KluenersPauli2005} then embeds $\OO_t^\times/R_t^\times$ into $\prod_{p\mid N_t}(\FF_p,+)$. It follows that $\varepsilon_t\mid N_t$.

We combine this divisibility with regulator estimates and congruences. A separate cubic-power argument treats the rational prime $3$. Louboutin's Theorems~17 and~18 \cite[Theorems~17 and~18]{Louboutin2020} state that $3\mid\varepsilon_t$ if and only if $t=3$, and that $7\mid\varepsilon_t$ if and only if $t=5$, for all $t\geq-1$. Our proofs are independent of those two theorems. We treat divisibility by $3$ for all $t$ and divisibility by $7$ under the squarefree-index hypothesis.

When $N_t=27$, Hoshi and Iida's criterion \cite[Corollary~1.2(3), p.~2]{HoshiIida2026} gives $t=27n+12$ for an integer $n\geq0$ and requires $\Delta_t/27$ to be squarefree. We give an integral basis, prove that the order conductor is $9\OO_t$, and compute the residue-unit quotient. A regulator comparison rules out $\varepsilon_t=13$ for $t\geq39$, while an explicit change of free-unit basis gives $\varepsilon_{12}=13$.

For $N_t=p^3$, where $p\neq3$ is a rational prime, Kashio--Sekigawa's power-basis formula \cite[Theorem~1.1, formula~(3), and Remark~1.2, p.~291]{KashioSekigawa2021} specializes to
\[
    \OO_t=\ZZ[(\theta_t-a)/p],
    \qquad 3a\equiv t\pmod p.
\]
We use this basis to prove that the order conductor is $p^2\OO_t$. We also compute the order of the residue-unit quotient in every case and its group structure when $p$ is unramified.

For arbitrary additive index, a regulator comparison gives $\varepsilon_t=1$ when $t\geq2N_t$. In the prime-cube case, this bound, the condition $p^3\mid\Delta_t$, and Hensel lifting leave at most four possible exceptional parameters for each fixed $p$; see \Cref{cor:prime-cube-finite}.

For $p=7$, the remaining arguments prove \Cref{thm:index343}. Hoshi's theorem \cite[Theorem~1.4]{Hoshi2011} already gives $K_{54}=K_0$; we give explicit unit coordinates to obtain $\varepsilon_{54}=19$.

The unit results of Gil-Mu\~noz and Tinkov\'a \cite[Proposition~3.1 and Corollary~3.4]{GilMunozTinkova2025} determine the maximal-order units in the nonmonogenic index-$3$ subfamily defined by $t>12$, $t\equiv3,21\pmod{27}$, and squarefree $\Delta_t/27$. For their root $\rho$, the relation $\rho'=-(\rho+1)/\rho$ gives $\langle-1,\rho,\rho'\rangle=\langle-1,\rho,\rho+1\rangle$. Their result therefore gives $\varepsilon_t=1$ in this subfamily, in agreement with \Cref{thm:squarefree-unit-rigidity}. Our proof is independent.

For an order $S$ in $K_t$, we write $\Pic(S)$ for the group of invertible fractional $S$-ideal classes. We write $\ICM(S)$ for the monoid of full fractional $S$-ideals modulo $S$-module isomorphism. Marseglia's equivalence \cite[Corollary~3.4]{Marseglia2020} identifies these classes with classes modulo $K_t^\times$-homothety. Suppose now that $N_t$ is a rational prime. The local calculation and the change-of-order exact sequence \cite[Theorem~5.6 and Proposition~6.2]{KluenersPauli2005} give
\[
    \frac{\#\Pic(R_t)}{\#\Pic(\OO_t)}
    =\frac{N_t}{[\OO_t^\times:R_t^\times]}.
\]
The order class-number formula \cite[Theorem~5.3(b), p.~17]{ArpinMarsegliaSpringer2025} includes this equality as a special case. Section~\ref{sec:ideal-classes} identifies the Picard kernel and every fiber explicitly. Section~\ref{sec:matrix-conjugacy} translates these fibers into integral matrix-conjugacy classes.

The proofs are organized around unit indices and their applications. Sections~\ref{sec:local-structure} and~\ref{sec:unit-proof} establish the local structure and the general restrictions used in the squarefree case. Sections~\ref{sec:index27} and~\ref{sec:prime-cube} treat the prime-cube cases. Section~\ref{app:thirteenth} then proves Theorems~\ref{app:unit13} and~\ref{thm:louboutin19} by an explicit genus-two argument. The final two sections apply the unit-index results to ideal classes and integral matrix conjugacy.

\section{Local structure at a squarefree additive index}
\label{sec:local-structure}

\begin{proposition}\label{prop:squarefree-local-structure}
    Assume that $N_t$ is squarefree, and let $p$ be a rational prime dividing $N_t$. Then $p$ is totally ramified in $K_t$, so there is a unique prime ideal $\mathfrak P_p$ of $\OO_t$ such that $p\OO_t=\mathfrak P_p^3$. Set
    \[
        A_p=\OO_t\otimes_{\ZZ}\ZZ_p,\qquad S_p=R_t\otimes_{\ZZ}\ZZ_p.
    \]
    Then
    \[
        S_p=\ZZ_p+\mathfrak P_p^2A_p,\qquad
        \mathfrak f_tA_p=\mathfrak P_p^2A_p,\qquad
        \frac{A_p^\times}{S_p^\times}\cong(\FF_p,+),
    \]
    where $\mathfrak f_t=(R_t:\OO_t)$. Consequently,
    \[
        \OO_t^\times/R_t^\times\lhook\joinrel\longrightarrow\prod_{p\mid N_t}(\FF_p,+),
        \qquad \varepsilon_t\mid N_t.
    \]
\end{proposition}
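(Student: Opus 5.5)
The plan is to establish total ramification first, then settle the local structure by a finite-dimensional algebra argument, and finally globalize with the Klüners–Pauli sequence.

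\emph{Step 1: $p$ is totally ramified.} Since $N_t=\Delta_t/c_t$ and $N_t$ is squarefree, we have $v_p(N_t)=1$. I will invoke the valuation formulas of Kashio and Sekigawa \cite[Propositions~3.1--3.3]{KashioSekigawa2021}.

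For $p\neq 3$, the $p$-part of $c_t$ is $p$ if $p$ ramifies and $1$ otherwise. Hence $v_p(N_t)=v_p(\Delta_t)-1$ in the ramified case and $v_p(N_t)=v_p(\Delta_t)$ in the unramified case. Those formulas also show that a prime $p\neq3$ dividing $\Delta_t$ is unramified only when $3\mid v_p(\Delta_t)$. So an unramified $p$ with $p\mid N_t$ would force $v_p(N_t)\geq 3$, contradicting squarefreeness.

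For $p=3$, I expect to read off from the $3$-adic case of their formulas that $v_3(N_t)=1$ occurs only when $9\mid c_t$. Indeed, $3\mid\Delta_t$ forces $3\mid t$, hence $9\mid\Delta_t$. Since $K_t/\QQ$ is Galois of degree $3$, ramification at $p$ means total ramification, so $p\OO_t=\mathfrak P_p^3$. This bookkeeping with the cited valuation formulas is the step I expect to need the most care, especially for $p=3$.

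\emph{Step 2: local structure.} Now $A_p$ is a DVR with uniformizer $\pi$, residue field $\FF_p$, and $pA_p=\mathfrak P_p^3A_p$. Because $[A_p:S_p]=p$, the quotient $A_p/S_p$ is killed by $p$, so $\mathfrak P_p^3A_p\subseteq S_p$. Therefore $\bar S=S_p/pA_p$ is a $2$-dimensional $\FF_p$-subalgebra of $A_p/pA_p\cong\FF_p[\pi]/(\pi^3)$ containing $1$.

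Write $\bar S=\FF_p+\FF_p x$, where $x=a\pi+b\pi^2$ is nilpotent. If $a\neq0$, then $x^2=a^2\pi^2$ lies outside $\FF_p+\FF_p x$, because its $\pi$-coefficient vanishes while its $\pi^2$-coefficient does not. Hence $a=0$ and $\bar S=\FF_p+\FF_p\pi^2$, which gives $S_p=\ZZ_p+\mathfrak P_p^2A_p$.

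The conductor $\mathfrak f_tA_p$ is the largest $A_p$-ideal contained in $S_p$. It contains $\mathfrak P_p^2A_p$, and it cannot be $A_p$ or $\mathfrak P_pA_p$, since neither lies in $S_p$. Hence $\mathfrak f_tA_p=\mathfrak P_p^2A_p$.

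For the units, $A_p^\times=\ZZ_p^\times(1+\mathfrak P_pA_p)$ and $S_p^\times=\ZZ_p^\times(1+\mathfrak P_p^2A_p)$. Moreover $\ZZ_p^\times\cap(1+\mathfrak P_pA_p)=1+p\ZZ_p\subseteq 1+\mathfrak P_p^2A_p$. So
\[
A_p^\times/S_p^\times\cong(1+\mathfrak P_pA_p)/(1+\mathfrak P_p^2A_p)\cong(\FF_p,+)
\]
via $1+u\pi\mapsto \bar u$.

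\emph{Step 3: globalize.} The change-of-order exact sequence \cite[Theorem~5.6 and Proposition~6.2]{KluenersPauli2005} gives an injection
\[
\OO_t^\times/R_t^\times\hookrightarrow\prod_{p\mid N_t}A_p^\times/S_p^\times .
\]
By Step 2 each factor is $(\FF_p,+)$, which gives the stated embedding. Taking orders, $\varepsilon_t$ divides $\prod_{p\mid N_t}p=N_t$.
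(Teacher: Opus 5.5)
Your proposal is correct and follows essentially the same route as the paper. The shared steps are: Kashio--Sekigawa valuations to force (total) ramification, the two-dimensional subalgebra argument inside $A_p/pA_p\cong\FF_p[\pi]/(\pi^3)$ to get $S_p=\ZZ_p+\mathfrak P_p^2A_p$, maximality for the conductor, and the Kl\"uners--Pauli sequence to globalize.

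The differences are cosmetic. You compute $A_p^\times/S_p^\times$ through $\ZZ_p^\times(1+\mathfrak P_pA_p)$ rather than modulo the conductor; the paper's form is the one that plugs directly into Kl\"uners--Pauli, via $1+\mathfrak P_p^2A_p\subseteq S_p^\times$. At $p=3$, your remark that $3\mid\Delta_t$ forces $9\mid\Delta_t$ already rules out an unramified $3$ with $v_3(N_t)=1$.
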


\begin{proof}
    Write $e_{\Delta}=v_p(\Delta_t)$ and $b_c=v_p(c_t)$. The squarefree-index hypothesis gives $v_p(N_t)=e_{\Delta}-b_c=1$. We treat the prime $3$ separately. Kashio--Sekigawa's Propositions~3.1(i) and~3.2 give $e_{\Delta}\in\{0,2,3\}$ and $b_c\in\{0,2\}$. Thus $e_{\Delta}-b_c=1$ forces $(e_{\Delta},b_c)=(3,2)$.

    For $p\neq3$, their Propositions~3.1 and~3.3 give $b_c\in\{0,1\}$, and $b_c=0$ only if $e_{\Delta}\equiv0\pmod3$. If $b_c=0$, then $e_{\Delta}-b_c=1$ would give $e_{\Delta}=1$, a contradiction. Therefore $(e_{\Delta},b_c)=(2,1)$. In summary,
    \begin{equation}
        (e_{\Delta},b_c)=
        \begin{cases}
            (3,2), & p=3,    \\
            (2,1), & p\neq3.
        \end{cases}
        \label{eq:valuation-pairs}
    \end{equation}
    Kashio--Sekigawa's conductor formula and valuation results \cite[equation~(5), p.~294 and Propositions~3.1--3.3, pp.~298--299]{KashioSekigawa2021} give these cases. In particular, $p\mid c_t$. Since $\operatorname{disc}(\OO_t)=c_t^2$, the rational prime $p$ divides the field discriminant and therefore ramifies in $K_t$.

    Let $r_p$ be the number of prime ideals above $p$. Their ramification indices and residue degrees have common values $e_{\mathrm{ram}}$ and $f_{\mathrm{res}}$ because $K_t/\QQ$ is Galois. The degree formula
    \[
        3=r_pe_{\mathrm{ram}}f_{\mathrm{res}}
    \]
    and the inequality $e_{\mathrm{ram}}>1$ force $e_{\mathrm{ram}}=3$ and $r_p=f_{\mathrm{res}}=1$. Thus $p\OO_t=\mathfrak P_p^3$.

    Since $[A_p:S_p]=p^{v_p(N_t)}=p$, the additive group $A_p/S_p$ has order $p$ and is annihilated by $p$. Hence $pA_p\subseteq S_p$. Put
    \[
        \overline A=A_p/pA_p,\qquad \overline S=S_p/pA_p,\qquad \mathfrak m=\mathfrak P_pA_p/\mathfrak P_p^3A_p.
    \]
    The algebra $\overline A$ is a three-dimensional local $\FF_p$-algebra with radical $\mathfrak m$. Since $A_p$ is a discrete valuation ring with residue field $\FF_p$, both $\mathfrak m/\mathfrak m^2$ and $\mathfrak m^2$ are one-dimensional over $\FF_p$. The subalgebra $\overline S$ has dimension two and maps onto $\overline A/\mathfrak m\cong\FF_p$, because it contains the image of $\ZZ_p$. Hence $\ell_{\mathrm{rad}}:=\overline S\cap\mathfrak m$ has dimension one.

    If $u\in \ell_{\mathrm{rad}}\setminus\mathfrak m^2$, then $u^2$ would be a nonzero element of $\mathfrak m^2$. Indeed, a lift of $u$ to $A_p$ has normalized $\mathfrak P_p$-adic valuation one, so its square has valuation two and is nonzero modulo $\mathfrak P_p^3A_p=pA_p$. Because $\ell_{\mathrm{rad}}$ is one-dimensional and closed under multiplication, one has $u^2=\mu u$ for some $\mu\in\FF_p$. Reduction modulo $\mathfrak m^2$ forces $\mu=0$, contradicting $u^2\neq0$. Thus $\ell_{\mathrm{rad}}\subseteq\mathfrak m^2$, and equality follows from the dimensions. Taking inverse images gives $S_p=\ZZ_p+\mathfrak P_p^2A_p$.

    This formula also gives $\mathfrak P_pA_p\cap S_p=\mathfrak P_p^2A_p$. Indeed, an element of $\ZZ_p$ that lies in $\mathfrak P_pA_p$ is divisible by $p$, and $pA_p=\mathfrak P_p^3A_p\subseteq\mathfrak P_p^2A_p$. Since $\mathfrak f_t=\operatorname{Ann}_{R_t}(\OO_t/R_t)$ and $\OO_t/R_t$ is finite, localization followed by $p$-adic completion gives
    \[
        \mathfrak f_tA_p
        =\operatorname{Ann}_{S_p}(A_p/S_p)
        =\{x\in A_p:xA_p\subseteq S_p\}.
    \]
    The ideal $\mathfrak P_p^2A_p$ is contained in this local conductor. The only larger $A_p$-ideals are $\mathfrak P_pA_p$ and $A_p$, neither of which is contained in $S_p$. Hence $\mathfrak f_tA_p=\mathfrak P_p^2A_p$.

    Choose a uniformizer of $A_p$, and denote its image modulo $\mathfrak P_p^2A_p$ by $\overline{\varrho}_p$. Modulo the conductor,
    \[
        S_p/\mathfrak P_p^2A_p\cong\FF_p,\qquad A_p/\mathfrak P_p^2A_p\cong\FF_p[\overline{\varrho}_p]/(\overline{\varrho}_p^2).
    \]
    Under the second isomorphism, the image of $\ZZ_p$ is the constant subfield $\FF_p$; it is therefore the image of $S_p$ as well. Every unit in the second ring has a unique expression $a(1+b\overline{\varrho}_p)$ with $a\in\FF_p^\times$ and $b\in\FF_p$. The ideal $\mathfrak P_p^2A_p$ lies in the Jacobson radicals of both $A_p$ and $S_p$. Reduction is therefore surjective on both unit groups, with the common kernel $1+\mathfrak P_p^2A_p$. Consequently,
    \[
        A_p^\times/S_p^\times
        \cong
        (A_p/\mathfrak P_p^2A_p)^\times/
        (S_p/\mathfrak P_p^2A_p)^\times
        \cong 1+\overline{\varrho}_p\FF_p
        \cong(\FF_p,+).
    \]

    Let $\mathfrak p_p=\mathfrak P_p\cap R_t$. This is the unique prime ideal of $R_t$ above $p$. Indeed, every prime ideal of $R_t$ above $p$ lies below a prime ideal of the integral extension $\OO_t/R_t$, and $\mathfrak P_p$ is the only prime ideal of $\OO_t$ above $p$.

    The rings $A_p$ and $S_p$ are the completions of $\OO_{t,\mathfrak P_p}$ and $R_{t,\mathfrak p_p}$, respectively. Kl\"uners--Pauli's Proposition~6.2 \cite[Theorem~5.6 and Proposition~6.2]{KluenersPauli2005} identifies the local quotient by reduction modulo the conductor. The residue rings are finite, so completion does not change their unit-group quotient. Hence the factor at $\mathfrak p_p$ is
    \[
        A_p^\times/S_p^\times\cong(\FF_p,+).
    \]

    The change-of-order exact sequence embeds $\OO_t^\times/R_t^\times$ into the product of these factors. If a rational prime $\ell$ does not divide $N_t$, then the two orders agree after tensoring with $\ZZ_\ell$, so its factor is trivial. The product has order $\prod_{p\mid N_t}p=N_t$. Lagrange's theorem now gives $\varepsilon_t\mid N_t$.
\end{proof}

\begin{corollary}\label{cor:prime-local-structure}
    Assume that $N_t$ is a rational prime. Then, for the unique prime ideal $\mathfrak P$ of $\OO_t$ above $N_t$, we have
    \[
        N_t\OO_t=\mathfrak P^3,\qquad R_t=\ZZ+\mathfrak P^2,\qquad \mathfrak f_t=\mathfrak P^2,
    \]
    and
    \[
        \frac{(\OO_t/\mathfrak f_t)^\times}{(R_t/\mathfrak f_t)^\times}\cong(\FF_{N_t},+),
        \qquad \varepsilon_t\in\{1,N_t\}.
    \]
\end{corollary}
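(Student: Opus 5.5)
The plan is to specialize Proposition~\ref{prop:squarefree-local-structure} to the case $N_t=p$ prime and then pass from the completed statements to global ones. Write $p=N_t$. A prime is squarefree, so the proposition applies to $p$. It shows that $p$ is totally ramified, with $p\OO_t=\mathfrak P^3$, and that
\[
    R_t\otimes\ZZ_p=\ZZ_p+\mathfrak P^2A_p,\qquad \mathfrak f_tA_p=\mathfrak P^2A_p,\qquad A_p^\times/S_p^\times\cong(\FF_p,+).
\]
For every prime $\ell\neq p$, the orders agree after tensoring with $\ZZ_\ell$, because $\ell\nmid N_t$. Hence $\mathfrak f_t\otimes\ZZ_\ell=\OO_t\otimes\ZZ_\ell=\mathfrak P^2\otimes\ZZ_\ell$, since $\mathfrak P$ lies only above $p$. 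A lattice in $K_t$ is determined by its completions at all primes. Therefore $\mathfrak f_t=\mathfrak P^2$.

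The same local-global comparison gives $R_t=\ZZ+\mathfrak P^2$. Both sides are lattices. At $\ell\neq p$ both completions equal $\OO_t\otimes\ZZ_\ell$, because $\mathfrak P^2\otimes\ZZ_\ell$ is already the whole completion. At $p$ the two completions agree by the proposition. Alternatively, I can argue by inclusion and index. The inclusion $\ZZ+\mathfrak P^2\subseteq R_t$ follows from $\mathfrak P^2=\mathfrak f_t\subseteq R_t$. Both lattices have index $p$ in $\OO_t$: one has $[\OO_t:\mathfrak P^2]=p^2$, and $\ZZ$ meets $\mathfrak P^2$ in $p\ZZ$, which gives $[\ZZ+\mathfrak P^2:\mathfrak P^2]=p$. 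Equal index together with the inclusion gives equality.

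For the residue-unit quotient I use that $\OO_t/\mathfrak f_t=\OO_t/\mathfrak P^2\cong A_p/\mathfrak P^2A_p$, because $\mathfrak P^2$ is supported only at $p$. Similarly, $R_t/\mathfrak f_t\cong S_p/\mathfrak P^2A_p\cong\FF_p$. The computation in the proof of Proposition~\ref{prop:squarefree-local-structure} then shows that the quotient of unit groups is $(\FF_p,+)$. Finally, the proposition gives $\varepsilon_t\mid N_t=p$, and since $p$ is prime, $\varepsilon_t\in\{1,p\}$.

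There is no real obstacle. The only point needing care is the passage from completed statements to global ones. I will justify it by the standard fact that a $\ZZ$-lattice is the intersection over all primes of its $\ZZ_\ell$-completions inside $K_t\otimes\QQ_\ell$. The index argument above serves as a backup.
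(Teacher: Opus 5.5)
Your proposal is correct and follows the paper's proof. You specialize Proposition~\ref{prop:squarefree-local-structure} at $p=N_t$, note that for every $\ell\neq p$ both $R_t$ and $\ZZ+\mathfrak P^2$ (and likewise $\mathfrak f_t$ and $\mathfrak P^2$) become $\OO_t\otimes\ZZ_\ell$, conclude by the local--global principle for lattices, and read off the residue-unit quotient and $\varepsilon_t\in\{1,N_t\}$ from the single local factor. Your backup index count is also valid ($\ZZ+\mathfrak P^2\subseteq R_t$, with both of index $p$ in $\OO_t$), and it gives a slightly more elementary proof of $R_t=\ZZ+\mathfrak P^2$ once $\mathfrak f_t=\mathfrak P^2$ is known.
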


\begin{proof}
    At the rational prime $N_t$, \Cref{prop:squarefree-local-structure} gives the required identities after tensoring with $\ZZ_{N_t}$, as well as the residue-unit quotient. If $\ell\neq N_t$ is a rational prime, then
    \[
        \mathfrak P^2\otimes_\ZZ\ZZ_\ell
        =\OO_t\otimes_\ZZ\ZZ_\ell.
    \]
    Thus both $R_t$ and $\ZZ+\mathfrak P^2$ become $\OO_t\otimes_\ZZ\ZZ_\ell$ after tensoring with $\ZZ_\ell$.

    The orders $R_t$ and $\ZZ+\mathfrak P^2$ are full $\ZZ$-lattices in $K_t$, and their tensor products with $\ZZ_\ell$ agree for every rational prime $\ell$. Therefore $R_t=\ZZ+\mathfrak P^2$. The same argument gives $\mathfrak f_t=\mathfrak P^2$. The residue-unit quotient and the two possible unit indices follow from \Cref{prop:squarefree-local-structure}, since there is only one local factor.
\end{proof}

\section{General unit restrictions and squarefree indices}
\label{sec:unit-proof}

\subsection{General restrictions}

We first justify the unit basis throughout the stated parameter range. We use Thomas's theorem \cite[Theorem~(3.10)]{Thomas1979} with the constant-term convention required by its proof: if the polynomial is $X^3-e_{\mathrm{Th}}X^2-f_{\mathrm{Th}}X-k_{\mathrm{Th}}$, then $k_{\mathrm{Th}}$ is the product of the roots. The opening definition in his Section~3 \cite[p.~37]{Thomas1979} has the opposite sign. The small-case inequality in the proof of~(3.8) \cite[proof of~(3.8), pp.~39 and~43]{Thomas1979} also needs a separate check at $t=-1$.

Put $x=\theta_{-1}$ and $\upsilon=(x+1)/x$. Then
\[
    \frac65<x<\frac54,\qquad
    \upsilon=x^2+x-1,\qquad x=\upsilon^2-2.
\]
Thus $\ZZ[\upsilon]=R_{-1}$. The ordered roots of $X^3-X^2-2X+1$ are
\[
    \upsilon,\qquad \upsilon'=\frac1{x+1},\qquad \upsilon''=-x.
\]
With $k_{\mathrm{Th}}=-1$, the expression asserted to exceed $2$ on Thomas's p.~43 is instead
\[
    (\upsilon-\upsilon'')(1-\upsilon')
    =x+\frac1{x+1}<\frac74<2.
\]
Thomas's cylinder argument still applies. Its required bound $\delta'<1/2$ follows from
\[
    (\upsilon-\upsilon')(\upsilon'-\upsilon'')
    >\left(\frac95-\frac5{11}\right)
    \left(\frac49+\frac65\right)>2,
    \qquad
    \delta'=\frac1{(\upsilon-\upsilon')(\upsilon'-\upsilon'')}.
\]
The relevant cylinder consists of units $\eta_{\mathrm{Th}}$ satisfying $|\eta_{\mathrm{Th}}|,|\eta_{\mathrm{Th}}''|<1<\eta_{\mathrm{Th}}'$. Its first unit minimizes $\eta_{\mathrm{Th}}'$. Since $\upsilon^{-1}$ belongs to this cylinder, the first unit has $\eta_{\mathrm{Th}}'\leq(\upsilon')^{-1}$. Thomas's coefficient bounds restrict this first unit to $-\upsilon^2+b_{\mathrm{Th}}\upsilon+c_{\mathrm{Th}}$, where $b_{\mathrm{Th}}\in\{0,1\}$ and $c_{\mathrm{Th}}\in\ZZ$. If $b_{\mathrm{Th}}=0$, its two bounded conjugates give $\upsilon^2-1<c_{\mathrm{Th}}<x^2+1$, hence $2<c_{\mathrm{Th}}<3$, which is impossible. If $b_{\mathrm{Th}}=1$, they give
\[
    x^2+x-1<c_{\mathrm{Th}}<\upsilon^2-\upsilon+1.
\]
Since $x>6/5$ and $\upsilon<11/6$, one has $1<c_{\mathrm{Th}}<3$, so $c_{\mathrm{Th}}=2$ and the unit is $-\upsilon^2+\upsilon+2=\upsilon^{-1}$. Thomas's Galois-order argument and the unit-basis criterion used there now apply. The exponent change from $\upsilon=x^{-1}(x+1)$ and its conjugate $(x+1)^{-1}$ to $x,x+1$ has determinant $1$, giving the required basis of $R_{-1}^\times/\{\pm1\}$.

For the other small polynomial $g_0$, the positive root $y=\theta_0$ satisfies $9/5<y<2$ and the corresponding expression is $y+\frac{1}{y+1}>32/15>2$. For $t\geq1$, the inequality $g_t(2)=1-6t<0$ puts the positive root above $2$, so the earlier part of Thomas's argument applies. These checks preserve the full parameter range of the unit theorem.

We now compare the regulators of $R_t$ and $\OO_t$. For an order $S$ in a totally real cubic field, $\Reg(S)$ is the absolute value of a $2\times2$ minor of the logarithmic embedding matrix of a free-unit basis of $S^\times/\{\pm1\}$. The torsion subgroup of $S^\times$ is $\{\pm1\}$. Hence, for two orders $S\subseteq T$ with finite unit index,
\begin{equation}
    \Reg(S)=[T^\times:S^\times]\Reg(T).
    \label{eq:regulator-index}
\end{equation}

Indeed, express a free-unit basis of $S^\times/\{\pm1\}$ in a free-unit basis of $T^\times/\{\pm1\}$, and let $M_{\mathrm{inc}}$ be the resulting integral matrix. Then $|\det M_{\mathrm{inc}}|=[T^\times:S^\times]$. The same change of basis multiplies the logarithmic determinant by $|\det M_{\mathrm{inc}}|$.

\begin{lemma}\label{lem:regulator-estimates}
    If $t>1$, then
    \begin{equation}
        \Reg(R_t)
        =(\log\theta_t)^2
        -\log\theta_t\log(\theta_t+1)
        +(\log(\theta_t+1))^2,
        \label{eq:order-regulator}
    \end{equation}
    and
    \begin{equation}
        \Reg(R_t)<\log^2(t+3).
        \label{eq:order-regulator-upper}
    \end{equation}
    Moreover,
    \begin{equation}
        \Reg(\OO_t)\geq\frac1{16}\log^2\!\left(\frac{\operatorname{disc}(\OO_t)}4\right)=\frac14\log^2(c_t/2).
        \label{eq:maximal-regulator-lower}
    \end{equation}
\end{lemma}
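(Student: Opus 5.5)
We treat the three assertions of \Cref{lem:regulator-estimates} in turn. For \eqref{eq:order-regulator}, we use Thomas's unit theorem: $R_t^\times/\{\pm1\}$ is generated by $\theta_t$ and $1+\theta_t$. Denote the conjugates by $\theta=\theta_t$, $\theta'=-1/(1+\theta)$ and $\theta''=-(1+\theta)/\theta$. The Galois action $\sigma(x)=-1/(1+x)$ sends $\theta$ to $\theta'$, and $\theta'$ to $\theta''$. Then $1+\theta'=\theta/(1+\theta)$ and $1+\theta''=-1/\theta$. Write $a=\log\theta$ and $b=\log(1+\theta)$. The logarithmic embedding vectors are $(a,\,-b,\,b-a)$ for $\theta$ and $(b,\,a-b,\,-a)$ for $1+\theta$. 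The regulator is the absolute value of the $2\times2$ minor from the first two coordinates, namely $|a(a-b)+b^2|=a^2-ab+b^2$. This is positive, so no absolute-value issue arises.

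For \eqref{eq:order-regulator-upper}, we locate $\theta_t$. For $t>1$ we have $g_t(t+1)=-2t-3<0$ and $g_t(t+2)=t^2+3t+1>0$, so $t+1<\theta_t<t+2$. Hence $0<a<b<\log(t+3)$. For $0\le a\le b$ we have $a^2-ab+b^2=b^2-a(b-a)\le b^2$. Therefore $\Reg(R_t)\le b^2=\log^2(1+\theta_t)<\log^2(t+3)$.

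For \eqref{eq:maximal-regulator-lower}, the equality is immediate from $\operatorname{disc}(\OO_t)=c_t^2$: we get $\tfrac1{16}\log^2(c_t^2/4)=\tfrac1{16}\cdot4\log^2(c_t/2)$. The inequality is the standard Cusick-type lower bound for totally real cubic fields, $\Reg(\OO_K)\ge\frac1{16}\log^2(\operatorname{disc}(\OO_K)/4)$. We would cite it, or derive it as follows. Take a unit $\eta\neq\pm1$ in $\OO_t$ and set $M=\max|\log|\eta^{(i)}||$. The discriminant of $\ZZ[\eta]$ bounds $\operatorname{disc}(\OO_t)$ from above, and $|\eta^{(i)}-\eta^{(j)}|$ is bounded in terms of $e^{M}$. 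This gives $\operatorname{disc}(\OO_t)\le 4e^{4M}$ roughly, so $M\ge\frac14\log(\operatorname{disc}/4)$. Since the field is cyclic, each unit generates a Galois-stable rank-two lattice with $\eta,\eta'$. The regulator compares to $M^2$ through the positive definite form $a^2-ab+b^2$, applied to the logarithm vector of a shortest unit.

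The main obstacle is this last step: getting the exact constant $\frac1{16}$ with the shift $\operatorname{disc}/4$. Our first choice is simply to quote Cusick's inequality for totally real cubic fields. If a self-contained argument is needed, we would use the cyclic structure: $\Reg(\OO_t)\ge\frac34\min_\eta\|\log\eta\|^2$ over non-torsion units, combined with the discriminant estimate above.
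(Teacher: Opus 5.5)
Your proposal is correct and follows essentially the same route as the paper. You use Thomas's basis with the explicit conjugates $-1/(1+\theta_t)$ and $-(1+\theta_t)/\theta_t$ to get the minor $a^2-ab+b^2$, then the bracketing $t+1<\theta_t<t+2$ with $a^2-ab+b^2=b^2-a(b-a)$ for the upper bound, and Cusick's theorem together with $\operatorname{disc}(\OO_t)=c_t^2$ for the lower bound. The paper likewise just cites Cusick, so your optional self-contained sketch is not needed; its constants, such as the $\frac34$, are not justified as written.
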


\begin{proof}
    Descartes' rule of signs shows that $\theta_t$ is the unique positive root of $g_t$. Its other roots are $-(\theta_t+1)/\theta_t$ and $-1/(\theta_t+1)$. Thomas's unit basis therefore has logarithmic minor
    \[
        \begin{pmatrix}
            \log\theta_t                & \log(\theta_t+1) \\
            \log((\theta_t+1)/\theta_t) & -\log\theta_t
        \end{pmatrix},
    \]
    which gives \eqref{eq:order-regulator}. The inequalities
    \[
        g_t(t+1)=-2t-3<0,\qquad g_t(t+2)=t^2+3t+1>0
    \]
    imply $t+1<\theta_t<t+2$. Therefore
    \[
        0<\log\theta_t<\log(\theta_t+1)<\log(t+3).
    \]
    Equation~\eqref{eq:order-regulator} can be written as
    \[
        \Reg(R_t)
        =(\log(\theta_t+1))^2
        +\log\theta_t\bigl(\log\theta_t-\log(\theta_t+1)\bigr).
    \]
    The second term is negative, so \eqref{eq:order-regulator-upper} follows. Finally, Cusick's theorem \cite[Theorem~1, pp.~63--66]{Cusick1984Regulators} applies to the totally real cubic field $K_t$ with the same logarithmic-minor normalization and gives \eqref{eq:maximal-regulator-lower}.
\end{proof}

\begin{lemma}\label{lem:odd-unit-index}
    The index $\varepsilon_t$ is odd for every $t\geq-1$.
\end{lemma}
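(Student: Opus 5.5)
I would show that the finite group $\OO_t^\times/R_t^\times$ has no element of order $2$; by Cauchy's theorem its order $\varepsilon_t$ is then odd. Suppose $u\in\OO_t^\times$ satisfies $u^2\in R_t^\times$. By Thomas's theorem, $R_t^\times=\langle-1,\theta_t,1+\theta_t\rangle$, so after multiplying $u$ by a suitable element of $R_t^\times$ we may assume
\[
    u^2=\eta=(-1)^c\theta_t^{a}(1+\theta_t)^{b},\qquad a,b,c\in\{0,1\}.
\]
It then suffices to prove $a=b=c=0$: in that case $u^2=1$, so $u=\pm1\in R_t^\times$, and the original unit also lies in $R_t^\times$.

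The key observation is that $K_t$ is totally real. Hence $\eta=u^2$ is totally positive, so the sign vector of $\eta$ under the three real embeddings is $(+,+,+)$. I would compute the sign vectors of the generators using the conjugates already recorded in the proof of Lemma~\ref{lem:regulator-estimates}. There the roots of $g_t$ are $\theta_t>0$, $\theta_t'=-(\theta_t+1)/\theta_t<0$ and $\theta_t''=-1/(\theta_t+1)<0$. This explicit description of the Galois action comes from the identity $g_t(-(X+1)/X)=-X^{-3}g_t(X)$, so it holds for every $t\geq-1$, including $t=-1,0$. Moreover, $\theta_t$ is the unique positive root by Descartes' rule.

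The conjugates of $1+\theta_t$ are then $1+\theta_t>0$, $1-(\theta_t+1)/\theta_t=-1/\theta_t<0$ and $1-1/(\theta_t+1)=\theta_t/(\theta_t+1)>0$. Writing signs additively in $\FF_2^3$, the generators $-1$, $\theta_t$ and $1+\theta_t$ have sign vectors
\[
    (1,1,1),\qquad (0,1,1),\qquad (0,1,0).
\]
These vectors are linearly independent over $\FF_2$, since the determinant of the matrix they form is $1\cdot(1\cdot0-1\cdot1)\equiv1\pmod 2$. Consequently the sign vector of $\eta$ is $c(1,1,1)+a(0,1,1)+b(0,1,0)$, and this vanishes only when $a=b=c=0$, as required.

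The only step needing care is making sure that the sign bookkeeping of the conjugates is valid uniformly on the whole range $t\geq-1$, in particular at the small parameters. This is handled by the Galois-action identity above, which holds for every $t$, together with the positivity of $\theta_t$. Beyond that I expect no real obstacle: no conductor or regulator input is needed, only Thomas's unit basis and the total reality of $K_t$.
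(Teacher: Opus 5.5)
Your proof is correct and is essentially the paper's argument: both combine Thomas's basis $-1,\theta_t,1+\theta_t$, the total reality of $K_t$, and the fact that the sign vectors $(-,-,-)$, $(+,-,-)$, $(+,-,+)$ of these generators form an $\FF_2$-basis, so that a unit $u$ with $u^2\in R_t^\times$ already lies in $R_t^\times$. The paper phrases the last step as $u^2=v^2$ with $v\in R_t^\times$, while you reduce the exponents modulo $2$; there is one harmless slip, since the identity is $g_t(-(X+1)/X)=X^{-3}g_t(X)$ without the minus sign, which does not affect the conclusion that $-(\theta_t+1)/\theta_t$ is a root.
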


\begin{proof}
    Order the real embeddings so that the images of $\theta_t$ are $\theta_t>0$, $-(\theta_t+1)/\theta_t<-1$, and $-1/(\theta_t+1)\in(-1,0)$. The signatures of $-1$, $\theta_t$, and $\theta_t+1$ are then
    \[
        (-,-,-),\qquad (+,-,-),\qquad (+,-,+).
    \]
    They form a basis of $\{\pm1\}^3$, viewed as a vector space over $\FF_2$. By Thomas's theorem, a totally positive unit of $R_t$ therefore has even exponents in these three generators and is a square in $R_t^\times$.

    If $\OO_t^\times/R_t^\times$ had even order, it would contain an element of order two, represented by a unit $u\notin R_t^\times$ with $u^2\in R_t^\times$. Since $u^2$ is totally positive, it equals $v^2$ for some $v\in R_t^\times$. The identity $u^2=v^2$ in a field implies $u=\pm v$, a contradiction. Louboutin's oddness result \cite[Lemma~12, p.~73]{Louboutin2020} therefore follows directly.
\end{proof}

\begin{lemma}\label{lem:three-divisibility}
    Let $t\in\ZZ$ with $t\geq-1$. If $3\mid\varepsilon_t$, then $t=3$.
\end{lemma}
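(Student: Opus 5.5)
If $3\mid\varepsilon_t$, the group $\OO_t^\times/R_t^\times$ has an element of order $3$. So there is a unit $u\in\OO_t^\times\setminus R_t^\times$ with $u^3\in R_t^\times$. By Thomas's theorem we may write
\[
u^3=\pm\theta_t^{a}(1+\theta_t)^{b}.
\]
Replacing $u$ by $\pm u\,\theta_t^{-\lfloor a/3\rfloor}(1+\theta_t)^{-\lfloor b/3\rfloor}$, we may assume $u^3=\theta_t^a(1+\theta_t)^b$ with $(a,b)\in\{0,1,2\}^2\setminus\{(0,0)\}$. The case $(a,b)=(0,0)$ is excluded because $K_t$ is real and $u^3=1$ forces $u=1$.

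Next use the Galois action $\sigma:\theta\mapsto-(\theta+1)/\theta$. Since $\sigma(\theta_t)=-(1+\theta_t)/\theta_t$ and $\sigma(1+\theta_t)=-1/\theta_t$, the action on exponent vectors modulo signs is by the matrix of order $3$ sending $(a,b)\mapsto(-a-b,\,a)$. Modulo $3$ this matrix has the single eigenvalue $1$. Its only invariant line is $a\equiv b\equiv\ldots$; concretely, $(1,1)\mapsto(-2,1)\equiv(1,1)$, so the invariant line is spanned by $(1,1)$. Replacing $u$ by $\sigma(u)/u$ or applying $\sigma$ repeatedly, we reduce to two cases. If $u^3$ is not on the invariant line, then $\sigma(u)u^{-1}$ is a unit whose cube has exponent vector on that line. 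It is still not in $R_t^\times$ unless the difference vector is divisible by $3$, which is ruled out because the matrix minus the identity has rank one modulo $3$. In either case we obtain a unit whose cube is $\pm\theta_t^{\pm1}(1+\theta_t)^{\pm1}$ times a cube in $R_t^\times$. This reduces us to the single equation
\[
u^3=\pm\theta_t(1+\theta_t)
\]
with $u\in\OO_t$, after possibly replacing $\theta_t(1+\theta_t)$ by its inverse and absorbing the sign into $u$.

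Now write $\eta=\theta_t(1+\theta_t)$. From $g_t$, $\eta=\theta_t^2+\theta_t$, and a direct computation gives its minimal polynomial. Its conjugates are $\eta$, $\sigma(\eta)=(\theta+1)/\theta^2$, and $\sigma^2(\eta)=\theta/(\theta+1)^2$, with trace an explicit quadratic in $t$. Put $u=\eta^{1/3}$, a real cube root. Its conjugates are the real cube roots of the conjugates of $\eta$, and $u$ is an algebraic integer in $K_t$. Hence its trace $s=\eta^{1/3}+\sigma(\eta)^{1/3}+\sigma^2(\eta)^{1/3}$ and the second symmetric function $s'$ of these cube roots are rational integers. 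The same holds for the cube roots of $\eta^{-1}$. Using $\theta_t\in(t+1,t+2)$, we get asymptotically $\eta^{1/3}\approx t^{2/3}$, $\sigma(\eta)^{1/3}\approx t^{-1/3}$, and $\sigma^2(\eta)^{1/3}\approx t^{-1/3}$. Then $s^3$ and the identity
\[
s^3=\Tr(\eta)+3(\text{mixed terms})
\]
constrain $t$ tightly. The cleanest route is the identity for cube roots $\alpha,\beta,\gamma$ with $\alpha\beta\gamma=1$:
\[
\Tr(\eta)=\alpha^3+\beta^3+\gamma^3=s^3-3ss'+3.
\]
Together with the analogous relation for $\eta^{-1}$, namely $s'^3-3ss'+3$, this gives a system of two Diophantine equations in the integers $s,s',t$.

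The main obstacle is solving this system. I expect $\Tr(\eta)$ and $\Tr(\eta^{-1})$ to be quadratic polynomials in $t$. Subtracting the two equations yields $s^3-s'^3=$ an expression linear in $t$, while the real estimates give $s=t^{2/3}+O(1)$. I would eliminate $t$ to get a single curve in $(s,s')$ and bound its solutions. The estimates $|\sigma(\eta)^{1/3}|,|\sigma^2(\eta)^{1/3}|<1$ for large $t$ make $s$ the nearest integer to $\eta^{1/3}+$ small, and $s'$ close to $\eta^{-1/3}\cdot(\ldots)$. These should force a contradiction for $t$ beyond a small bound, for instance by showing that $s^3$ lies strictly between consecutive cubes. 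A finite check of the remaining small $t$ then leaves only $t=3$, where $\varepsilon_3=3$ is realized. If this elimination proves messy, the fallback is a local argument: since $u^3\equiv\eta$ must be solvable in $\OO_t/3\OO_t$ or in $\OO_t/\mathfrak P$ for suitable primes, congruence conditions on $t$ follow, and the regulator bound of \Cref{lem:regulator-estimates} eliminates large $t$.
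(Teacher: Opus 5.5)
Your reduction to a unit $u\in\OO_t^\times$ with $u^3=\theta_t(1+\theta_t)$ is correct and matches the paper's argument. Modulo $3$ the exponent action of $\sigma$ is unipotent, its kernel and image are both the line spanned by $(1,1)$, and $\theta_t(1+\theta_t)=\xi(\xi-1)$ for $\xi=-\theta_t$. The gap is in the second half.

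First, several signs are wrong. One has $\Norm_{K_t/\QQ}(\theta_t)=1$ but $\Norm_{K_t/\QQ}(1+\theta_t)=-g_t(-1)=-1$. Hence $\alpha\beta\gamma=-1$, $\sigma^2(\eta)=-\theta_t/(1+\theta_t)^2$, and Newton's identity reads $\Tr(\eta)=s^3-3ss'-3$.

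More seriously, you treat $s'$ as an unknown, but it is forced to vanish. All embeddings are real, so uniqueness of real cube roots gives $\sigma(u)=u/\theta_t$ and $\sigma^2(u)=-u/(1+\theta_t)$. Hence
\[
s'=u^2\Bigl(\frac1{\theta_t}-\frac1{1+\theta_t}-\frac1{\theta_t(1+\theta_t)}\Bigr)=0 .
\]
Equivalently, $\Tr(\eta^{-1})=-3$ for every $t$; it is not a quadratic in $t$. So $u$ has minimal polynomial $X^3-sX^2+1$, and the problem collapses to the single equation $s^3=\Tr(\eta)+3=t^2+3t+9$.

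Your proposed way of finishing cannot work. The equation $s^3=t^2+3t+9$ is equivalent to $(3s)^3=(t+6)^3+(3-t)^3$, or to $(2t+3)^2=4s^3-27$. This is a curve of genus one, and no archimedean squeeze decides it. Near $t^2$, consecutive cubes are about $3t^{4/3}$ apart, while consecutive values of $t^2+3t+9$ differ by $2t+4$. So size estimates can never place $t^2+3t+9$ strictly between two cubes for all large $t$. Your real estimates only recover $s$ as a function of $t$; the whole question is whether $t^2+3t+9$ is a cube.

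The missing ingredient is arithmetic. Euler's case $n=3$ of Fermat's theorem, applied to $(3s)^3=(t+6)^3+(3-t)^3$, forces one of $3s$, $t+6$, $3-t$ to vanish. Since $s^3=\Delta_t>0$ and $t+6>0$, this leaves $t=3$; this is exactly the paper's final step.

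Your fallback does not close the gap either. Congruence conditions from local solvability of $u^3=\eta$ would still need a bound eliminating large $t$, and the regulator bound is not one. The lemma carries no hypothesis on $N_t$, and $\varepsilon_t<4\log^2(t+3)/\log^2(c_t/2)$ is useless when $c_t$ is small relative to $t$ (for instance $c_{54}=9$). In the range $t\geq2N_t$ where the paper applies it, it yields only $\varepsilon_t<4$, which does not exclude $3$. Indeed, the paper invokes the present lemma to remove that case in Proposition~\ref{prop:uniform-unit-bound}.
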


\begin{proof}
    Put $\xi=-\theta_t$ and choose the generator $\tau$ of $\Gal(K_t/\QQ)$ satisfying $\tau(\xi)=(\xi-1)/\xi$. Then $\tau^2(\xi)=-1/(\xi-1)$. This choice of $\tau$ is the inverse of the generator used in Louboutin's equation~(4) \cite[p.~72]{Louboutin2020}. Since $\xi$ and $\xi-1$ are units of $R_t$, these formulas show that $R_t$ is stable under $\tau$.

    Write $\Lambda_{\OO}=\OO_t^\times/\{\pm1\}$ and $\Lambda_R=R_t^\times/\{\pm1\}$ additively. Both are free abelian of rank two, and $[\Lambda_{\OO}:\Lambda_R]=\varepsilon_t$. The $K_t/\QQ$-norm of a unit is $\pm1$, so $(1+\tau+\tau^2)\Lambda_{\OO}=0$. Write $[\xi]\in \Lambda_R$ for the class of $\xi$. Then $[\xi],\tau [\xi]$ form a basis of $\Lambda_R$: Thomas's basis is $\xi,\xi-1$, and $\xi-1=\xi\tau(\xi)$.

    Suppose that $3\mid\varepsilon_t$. Let
    \[
        W_{\mathrm{ker}}=\ker\bigl(\Lambda_R/3\Lambda_R\longrightarrow \Lambda_{\OO}/3\Lambda_{\OO}\bigr).
    \]
    The determinant of the inclusion matrix of $\Lambda_R$ in $\Lambda_{\OO}$ is divisible by $3$, so $W_{\mathrm{ker}}\neq0$. The subspace $W_{\mathrm{ker}}$ is stable under the action $\overline\tau$ induced by $\tau$.

    The norm relation for $K_t/\QQ$ gives $1+\overline\tau+\overline\tau^2=0$ on $\Lambda_R/3\Lambda_R$. Over $\FF_3$, this identity is $(\overline\tau-1)^2=0$. If $\overline{[\xi]}$ denotes the image of $[\xi]$ in $\Lambda_R/3\Lambda_R$, then $\overline{[\xi]},\overline\tau\overline{[\xi]}$ form a basis. Hence $\overline\tau-1\neq0$, and
    \[
        \ker(\overline\tau-1)=\operatorname{im}(\overline\tau-1)
    \]
    is one-dimensional. Every nonzero $\overline\tau$-stable subspace of $\Lambda_R/3\Lambda_R$ contains this line. Indeed, if such a subspace is contained in $\ker(\overline\tau-1)$, then it equals that one-dimensional kernel. Otherwise, choose a vector in the subspace but outside the kernel. Its image under $\overline\tau-1$ is a nonzero element of both the subspace and $\operatorname{im}(\overline\tau-1)$. Therefore $(\overline\tau-1)\overline{[\xi]}\in W_{\mathrm{ker}}$.

    It follows that $(\tau-1)[\xi]\in3\Lambda_{\OO}$. Since $3[\xi]\in3\Lambda_{\OO}$, one also has $(\tau+2)[\xi]\in3\Lambda_{\OO}$. In multiplicative notation,
    \[
        \xi^2\tau(\xi)=\xi(\xi-1).
    \]
    Lifting the relation from $\Lambda_{\OO}=\OO_t^\times/\{\pm1\}$ and absorbing the possible minus sign into a cube gives
    \begin{equation}
        \eta^3=\xi(\xi-1)
        \label{eq:cube-relation}
    \end{equation}
    for some $\eta\in\OO_t^\times$. This is the power relation needed below. Louboutin's Lemma~13 \cite[pp.~73--74]{Louboutin2020} also contains this relation. All embeddings of $K_t$ are real, so uniqueness of real cube roots gives
    \[
        \tau(\eta)=-\frac{\eta}{\xi},\qquad \tau^2(\eta)=\frac{\eta}{\xi-1}.
    \]
    Therefore
    \[
        \Tr_{K_t/\QQ}(\eta^{-1})
        =\frac1\eta-\frac\xi\eta+\frac{\xi-1}{\eta}=0.
    \]
    Moreover,
    \[
        \Norm_{K_t/\QQ}(\xi)=-1,\qquad \Norm_{K_t/\QQ}(\xi-1)=1,\qquad \Norm_{K_t/\QQ}(\eta)=-1.
    \]
    The trace identity shows that $\eta\notin\QQ$, so $\eta$ has degree three. Set $\ell=\Tr_{K_t/\QQ}(\eta)\in\ZZ$. The coefficient of $X$ in the minimal polynomial of $\eta$ is
    \[
        \Norm_{K_t/\QQ}(\eta)\Tr_{K_t/\QQ}(\eta^{-1})=0.
    \]
    Hence that polynomial is $X^3-\ell X^2+1$. Newton's identities give
    \[
        \Tr_{K_t/\QQ}(\eta^3)=\ell^3-3,
    \]
    while direct reduction with the minimal polynomial of $\xi$ gives
    \[
        \Tr_{K_t/\QQ}(\xi(\xi-1))=t^2+3t+6.
    \]
    Thus
    \[
        \ell^3=\Delta_t=t^2+3t+9=\left(\frac{t+6}{3}\right)^3+\left(\frac{3-t}{3}\right)^3.
    \]
    Multiplying this identity by $27$ gives
    \[
        (3\ell)^3=(t+6)^3+(3-t)^3.
    \]
    Fermat's theorem for exponent three implies that at least one of $3\ell$, $t+6$, and $3-t$ is zero; this is also the final input in Louboutin's proof of Theorem~17 \cite[p.~77]{Louboutin2020}. Since $\ell^3=\Delta_t>0$ and $t+6>0$ for $t\geq-1$, we must have $t=3$.
\end{proof}

\begin{lemma}\label{lem:unit-index-inert-primes}
    Let $t\in\ZZ$ with $t\geq-1$. For every rational prime $\ell\equiv2\pmod3$, the integer $v_\ell(\varepsilon_t)$ is even. In particular, $\varepsilon_t\neq5$.
\end{lemma}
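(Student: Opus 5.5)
The plan is to use the Galois module structure of the unit lattices, as in \Cref{lem:three-divisibility}. Write $\Lambda_{\OO}=\OO_t^\times/\{\pm1\}$ and $\Lambda_R=R_t^\times/\{\pm1\}$ additively, with $\tau$ the generator chosen there. Since norms of units are $\pm1$, both lattices are annihilated by $1+\tau+\tau^2$. Both are therefore modules over $\ZZ[\omega]=\ZZ[X]/(X^2+X+1)$, with $\tau$ acting as a primitive cube root of unity $\omega$. The lattice $\Lambda_R$ is $\tau$-stable, because $\tau(\xi)$ and $\tau^2(\xi)$ lie in $R_t^\times$. Both lattices are torsion-free of $\ZZ$-rank two, so each is a rank-one torsion-free $\ZZ[\omega]$-module. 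Since $\ZZ[\omega]$ is a PID, each is free of rank one. This matches the computation in \Cref{lem:three-divisibility}, where $[\xi]$ generates $\Lambda_R$ over $\ZZ[\tau]$.

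Next I will choose generators. Let $\Lambda_{\OO}=\ZZ[\omega]\cdot u$ and $\Lambda_R=\ZZ[\omega]\cdot[\xi]$. Then $[\xi]=\alpha u$ for some nonzero $\alpha\in\ZZ[\omega]$. The inclusion $\Lambda_R\subseteq\Lambda_{\OO}$ is multiplication by $\alpha$ on $\ZZ[\omega]\cong\ZZ^2$. Its determinant in absolute value is the norm $N_{\QQ(\omega)/\QQ}(\alpha)=a^2-ab+b^2$, where $\alpha=a+b\omega$. Hence
\[
    \varepsilon_t=[\Lambda_{\OO}:\Lambda_R]=a^2-ab+b^2 .
\]
The index of a principal ideal $\alpha\ZZ[\omega]$ in $\ZZ[\omega]$ equals $|N(\alpha)|$, so this identification is standard.

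It remains to apply a standard fact. A prime $\ell\equiv2\pmod 3$ is inert in $\ZZ[\omega]$. Every norm from $\ZZ[\omega]$ is therefore a product of the norms of its prime factors. Each inert prime contributes $\ell^2$, the ramified prime $1-\omega$ contributes $3$, and each split prime contributes a prime $\equiv1\pmod3$. Consequently $v_\ell(\varepsilon_t)$ is even. Finally, $\varepsilon_t=5$ would give $v_5(\varepsilon_t)=1$, which is odd, and $5\equiv 2\pmod 3$. So $\varepsilon_t\neq5$.

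The only delicate point is checking that the $\tau$-action on $\Lambda_{\OO}$ really factors through $\ZZ[\omega]$. The relation $1+\tau+\tau^2=0$ holds there because $\Norm_{K_t/\QQ}(u)=\pm1$ becomes trivial modulo $\pm1$. We also need $\Lambda_{\OO}$ to be torsion-free, which holds after quotienting by the full torsion subgroup $\{\pm1\}$ of the units of a real field. Once these are in place, everything else is routine.
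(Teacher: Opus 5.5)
Your argument is correct, but it takes a global route where the paper's is local.

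\textbf{The paper's route.} It never uses that $\ZZ[\omega]$ is a PID or that the unit lattices are free. It takes only the $\ell$-primary part $M_\ell$ of the finite quotient $\Lambda_{\OO}/\Lambda_R$ and notes that $1+\tau+\tau^2$ kills it. Each layer $\ell^jM_\ell/\ell^{j+1}M_\ell$ is then a vector space over $\FF_\ell[X]/(X^2+X+1)\cong\FF_{\ell^2}$, so it has even $\FF_\ell$-dimension. Summing the layers gives $v_\ell(\varepsilon_t)$. This length argument works for any finite $\ZZ[\omega]$-module and needs no information about the lattices themselves.

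\textbf{Your route.} You show that $\Lambda_R=\alpha\Lambda_{\OO}$ for some $\alpha\in\ZZ[\omega]$. This uses the PID property. It also needs the remark, worth one line in a write-up, that $\ZZ$-torsion-freeness implies $\ZZ[\omega]$-torsion-freeness, because $\bar\alpha\alpha\in\ZZ\setminus\{0\}$ for $\alpha\neq0$.

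\textbf{What your route buys.} You get the stronger statement $\varepsilon_t=\Norm_{\QQ(\omega)/\QQ}(\alpha)=a^2-ab+b^2$. It also explains the paper's other divisibility criteria uniformly.
\begin{itemize}
\item $3\mid\varepsilon_t$ exactly when $1-\omega$ divides $\alpha$. Since $(1-\omega)^2=-3\omega$, this is the condition $(\tau-1)[\xi]\in3\Lambda_{\OO}$ derived in \Cref{lem:three-divisibility}.
\item $13\mid\varepsilon_t$ exactly when one of the two primes of $\ZZ[\omega]$ above $13$ divides $\alpha$. This is the eigenline dichotomy in the proof of \Cref{app:unit13}.
\end{itemize}
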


\begin{proof}
    Let $\Lambda_{\OO}$, $\Lambda_R$, and $\tau$ be as in the proof of \Cref{lem:three-divisibility}, and let $M_\ell$ be the $\ell$-primary part of $\Lambda_{\OO}/\Lambda_R$. The norm relation for $K_t/\QQ$ implies that $1+\tau+\tau^2$ annihilates $M_\ell$. For every $j\geq0$, the quotient $\ell^jM_\ell/\ell^{j+1}M_\ell$ is therefore a module over
    \[
        \FF_\ell[X]/(X^2+X+1).
    \]
    Since $\ell\equiv2\pmod3$, the group $\FF_\ell^\times$ has no element of order three. Also $1$ is not a root of $X^2+X+1$ in $\FF_\ell$. This polynomial is consequently irreducible, and the quotient ring is $\FF_{\ell^2}$. Each filtration quotient has even dimension over $\FF_\ell$. Their dimensions sum to $v_\ell(\#M_\ell)=v_\ell(\varepsilon_t)$, proving the claim.
\end{proof}

\begin{proposition}\label{prop:uniform-unit-bound}
    Let $t\in\ZZ$ with $t\geq-1$. If $t\geq2N_t$, then $\varepsilon_t=1$.
\end{proposition}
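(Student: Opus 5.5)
The plan is a regulator comparison. By \eqref{eq:regulator-index} applied to $R_t\subseteq\OO_t$,
\[
    \varepsilon_t=\frac{\Reg(R_t)}{\Reg(\OO_t)}.
\]
For $t>1$, \Cref{lem:regulator-estimates} bounds the numerator by $\log^2(t+3)$ and the denominator below by $\frac14\log^2(c_t/2)$. Hence
\[
    \varepsilon_t<\frac{4\log^2(t+3)}{\log^2(c_t/2)}=\left(\frac{2\log(t+3)}{\log(c_t/2)}\right)^2,
\]
provided $c_t>2$. Since $N_t=\Delta_t/c_t$, we have $c_t=\Delta_t/N_t$. The hypothesis $t\geq2N_t$ gives
\[
    c_t\geq\frac{2(t^2+3t+9)}{t}>2t+6,
\]
so $c_t/2>t+3$. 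The aim is to get $\varepsilon_t<4$, or better $\varepsilon_t\le 3$.

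The crude bound with $c_t/2>t+3$ only yields $\varepsilon_t<4$. This already suffices once combined with earlier results. \Cref{lem:odd-unit-index} says $\varepsilon_t$ is odd, so $\varepsilon_t\in\{1,3\}$. \Cref{lem:three-divisibility} says $3\mid\varepsilon_t$ forces $t=3$. For $t=3$ we have $\Delta_3=27$. The condition $t\geq2N_t$ would require $N_3\leq1$, so $N_3=1$ and $R_3=\OO_3$, giving $\varepsilon_3=1$. That contradicts $3\mid\varepsilon_3$, so $t=3$ does not occur. Hence $\varepsilon_t=1$.

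Two bookkeeping points remain. First, the strict inequality must really be strict. It comes from the strict upper bound \eqref{eq:order-regulator-upper}, together with $\log(c_t/2)>\log(t+3)>0$, which ensures the denominator is positive and large enough.

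Second, the small parameters $t\in\{-1,0,1\}$ lie outside \Cref{lem:regulator-estimates}. Here $t\geq2N_t$ with $N_t\geq1$ forces $t\geq2$, so these cases are excluded automatically. Nothing further is needed.

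The main obstacle I expect is checking that Cusick's bound in the normalization of \eqref{eq:maximal-regulator-lower} really gives a factor $\frac14$. That factor is what makes the ratio $4\log^2(t+3)/\log^2(c_t/2)$ land below $4$. If the constant were worse, I would first try to sharpen \eqref{eq:order-regulator-upper} using $\theta_t\approx t+1$. Since $\Reg(R_t)\approx\log^2 t$ and $c_t\gtrsim 2t$, there is slack in the numerator of roughly $\log\theta_t\log(1+1/\theta_t)$. Failing that, I would use the parity and $3$-divisibility exclusions to tolerate a bound like $\varepsilon_t<5$. \Cref{lem:unit-index-inert-primes} excludes $\varepsilon_t=5$, which would allow the bound $\varepsilon_t<7$.
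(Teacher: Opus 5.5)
Your proposal is correct and follows the paper's proof essentially step for step: the regulator comparison with $c_t/2>t+3$ gives $\varepsilon_t<4$, and oddness together with \Cref{lem:three-divisibility} leaves only $t=3$. The only difference is cosmetic. You dispose of $t=3$ by noting that $3\geq 2N_3$ would force $N_3=1$ and hence $\varepsilon_3=1$, whereas the paper cites $N_3=3$ from the conductor formula; both work, and your fallback remarks about Cusick's constant are unnecessary since \eqref{eq:maximal-regulator-lower} is already available.
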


\begin{proof}
    The hypothesis implies $t\geq2$. Since
    \[
        \Delta_t-2N_t(t+3)=(t-2N_t)(t+3)+9>0,
    \]
    one has $c_t/2=\Delta_t/(2N_t)>t+3>1$. By \eqref{eq:regulator-index} and \Cref{lem:regulator-estimates},
    \[
        \varepsilon_t<\frac{4\log^2(t+3)}{\log^2(c_t/2)}<4.
    \]
    Since the index is odd, the only possibilities are $1$ and $3$. By \Cref{lem:three-divisibility}, the latter value requires $t=3$. The conductor formula gives $N_3=3$, so $t=3$ does not satisfy $t\geq2N_t$.
\end{proof}

\subsection{Squarefree additive indices}

\begin{proof}[Proof of \Cref{thm:squarefree-unit-rigidity}]
    The conductor formula gives $N_3=3$ and $N_5=7$. If $N_t=1$, then $t\notin\{3,5\}$, while $R_t=\OO_t$ and $\varepsilon_t=1$. Thus the assertion holds in this case.

    Assume that $N_t>1$ and $\varepsilon_t>1$. By \Cref{prop:squarefree-local-structure}, one has $\varepsilon_t\mid N_t$. Choose a rational prime $\ell$ dividing $\varepsilon_t$. Then $\ell\mid N_t$. Equation~\eqref{eq:valuation-pairs} shows that every rational prime divisor of $N_t$ also divides $c_t$. Since $N_t$ is squarefree, it follows that $N_t\mid c_t$. Thus $N_t\leq c_t$, and $\Delta_t=N_tc_t$ gives
    \begin{equation}
        c_t\geq\sqrt{\Delta_t}>t+1,
        \label{eq:conductor-lower}
    \end{equation}
    where the strict inequality follows from $\Delta_t-(t+1)^2=t+8>0$. The conductor formula gives $c_{-1}=7$, $c_0=9$, and $c_1=13$, so $N_{-1}=N_0=N_1=1$ and therefore $t\geq2$. By \Cref{lem:regulator-estimates}, equation~\eqref{eq:conductor-lower} gives
    \begin{equation}
        \ell\leq\varepsilon_t<\mathcal B(t):=
        4\left(\frac{\log(t+3)}
        {\log((t+1)/2)}\right)^2.
        \label{eq:squarefree-regulator-bound}
    \end{equation}

    For real $t>1$, put $b_{\log}(t)=\log(t+3)/\log((t+1)/2)$. Its logarithmic derivative is
    \[
        \frac{d}{dt}\log b_{\log}(t)
        =
        \frac1{(t+3)\log(t+3)}
        -
        \frac1{(t+1)\log((t+1)/2)}<0,
    \]
    because each positive factor in the second denominator is smaller than its counterpart in the first. Hence $\mathcal B$ is strictly decreasing. The exact integer inequalities $2^8\cdot13^5<11^8$ and $2^{29}\cdot25^{22}<23^{29}$ give
    \[
        \frac{\log 13}{\log(11/2)}
        <\frac85<\frac{\sqrt{11}}2,
        \qquad
        \frac{\log 25}{\log(23/2)}
        <\frac{29}{22}<\frac{\sqrt7}2.
    \]
    Therefore
    \begin{equation}
        \mathcal B(10)<4\left(\frac85\right)^2<11,
        \qquad
        \mathcal B(22)<4\left(\frac{29}{22}\right)^2<7.
        \label{eq:squarefree-cutoffs}
    \end{equation}

    If the rational prime $\ell$ satisfies $\ell\geq11$, then \eqref{eq:squarefree-regulator-bound} and \eqref{eq:squarefree-cutoffs} force $t\leq9$. Since $\ell\neq3$, equation~\eqref{eq:valuation-pairs} gives $\ell^2\mid\Delta_t$. The polynomial $t^2+3t+9$ is increasing for $t\geq2$, so
    \[
        \ell^2\leq\Delta_t\leq\Delta_9=117<121,
    \]
    a contradiction.

    If $\ell=7$, then \eqref{eq:squarefree-regulator-bound} and \eqref{eq:squarefree-cutoffs} give $2\leq t\leq21$, while \eqref{eq:valuation-pairs} gives $49\mid\Delta_t$. Write $\Delta(T)=T^2+3T+9$, so that $\Delta_t=\Delta(t)$. Its two roots modulo $7$ are $5$ and $6$, and both are simple because $\Delta'(5)\equiv6$ and $\Delta'(6)\equiv1\pmod7$.

    The first root lifts to $5$ modulo $49$, since $\Delta(5)=49$. Since $\Delta(6)/7\equiv2\pmod7$, the second root lifts to $6+7\cdot5=41$. Thus the roots modulo $49$ are $5$ and $41$, and only $t=5$ lies in the required interval.

    The case $\ell=5$ is impossible because the discriminant $-27\equiv3\pmod5$ of $\Delta$ is not a square modulo $5$. The case $\ell=2$ is impossible because $\Delta_t$ is odd. Finally, \Cref{lem:three-divisibility} shows that $\ell=3$ forces $t=3$. Thus $\varepsilon_t=1$ unless $t\in\{3,5\}$.

    For $t=3$, let $x=\theta_0$ be the largest real root of $g_0(X)=X^3-3X-1$ and set
    \[
        y=x^2+x-1=x^{-1}(x+1)^2.
    \]
    Using $x^3=3x+1$ gives
    \[
        y^2=2x^2+5x+3,\qquad y^3=12x^2+21x+4,
    \]
    so $g_3(y)=0$. Since $g_0(1)<0<g_0(2)$, one has $x>1$ and $y>0$. Descartes' rule of signs gives a unique positive root of $g_3$, so $y=\theta_3$. The same reduction gives
    \[
        x=\frac{y^2-2y-5}{3},\qquad y+1=x(x+1).
    \]
    Hence $K_3=K_0$ and therefore $\OO_3=\OO_0$. Since $c_0=\Delta_0=9$, one has $R_0=\OO_0$. Thomas's theorem makes $(x,x+1)$ a free-unit basis of $\OO_0^\times/\{\pm1\}$ and $(y,y+1)$ a free-unit basis of $R_3^\times/\{\pm1\}$. Relative to the first basis, the exponent vectors of the second are the columns of
    \[
        \begin{pmatrix}-1&1\\2&1\end{pmatrix}.
    \]
    Its determinant has absolute value $3$, so \eqref{eq:regulator-index} gives $\varepsilon_3=3$. Discriminant comparison gives $N_3=27/9=3$.

    For $t=5$, let $x=\theta_{-1}$ be the largest real root of $g_{-1}(X)=X^3+X^2-2X-1$ and put
    \[
        y=x^2+3x+1=x(x+1)^2.
    \]
    Using the relation $x^3=-x^2+2x+1$ gives
    \[
        y^2=8x^2+17x+6,\qquad
        y^3=48x^2+109x+39.
    \]
    These identities imply $g_5(y)=0$. Since $g_{-1}(1)<0<g_{-1}(2)$, one has $x>1$ and hence $y>0$. Descartes' rule of signs gives a unique positive root of $g_5$, so $y=\theta_5$. The same reduction gives
    \[
        x=\frac{-y^2+8y-2}{7},\qquad
        y+1=x^{-2}(x+1)^3.
    \]
    Thus $K_5=K_{-1}$ and $\OO_5=\OO_{-1}$. Hence $c_5=c_{-1}=\Delta_{-1}=7$ and $R_{-1}=\OO_{-1}$. Thomas's theorem makes $(x,x+1)$ a free-unit basis of $\OO_{-1}^\times/\{\pm1\}$ and $(y,y+1)$ a free-unit basis of $R_5^\times/\{\pm1\}$. Relative to the first basis, the exponent vectors of the second are the columns of
    \[
        \begin{pmatrix}1&-2\\2&3\end{pmatrix}.
    \]
    The absolute value of the determinant is $7$, so \eqref{eq:regulator-index} gives $\varepsilon_5=7$. Also
    \[
        N_5=[\OO_5:R_5]=\frac{\Delta_5}{\Delta_{-1}}=\frac{49}{7}=7.
    \]
\end{proof}

\begin{proof}[Proof of \Cref{cor:prime-index-unit-rigidity}]
    Every rational prime is squarefree, so the assertion follows from \Cref{thm:squarefree-unit-rigidity} together with $N_3=3$ and $N_5=7$.
\end{proof}

\section{\texorpdfstring{The case $N_t=27$}{The case N(t) = 27}}\label{sec:index27}

Hoshi and Iida's index criterion \cite[Corollary~1.2(3), p.~2]{HoshiIida2026} states that $N_t=27$ if and only if there is an integer $n$ such that $t=27n+12$ and the integer
\begin{equation}
    d_t:=\frac{\Delta_t}{27}=27n^2+27n+7
    \label{eq:index27-criterion}
\end{equation}
is squarefree. Since $t\geq-1$, the relation $t=27n+12$ forces $n\geq0$. In this case $c_t=d_t$.

\begin{proposition}\label{prop:index27-local-structure}
    Assume that $N_t=27$, write $t=27n+12$, and set $\beta=(\theta_t-1)/3$. Then
    \begin{equation}
        \OO_t=\ZZ[\beta],\qquad R_t=\ZZ+3\ZZ\beta+9\ZZ\beta^2,\qquad \mathfrak f_t=9\OO_t.
        \label{eq:index27-orders}
    \end{equation}
    The rational prime $3$ is unramified in $K_t$ and therefore either splits completely or remains inert. Moreover,
    \begin{equation}
        \frac{(\OO_t/9\OO_t)^\times}{(R_t/9\OO_t)^\times}
        \cong
        \begin{cases}
            C_3\times C_2^2,  & 3\text{ splits completely in }K_t, \\
            C_3\times C_{13}, & 3\text{ is inert in }K_t.
        \end{cases}
        \label{eq:index27-residue-quotient}
    \end{equation}
\end{proposition}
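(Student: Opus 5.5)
The plan is to compute the minimal polynomial of $\beta$ directly, deduce everything from it, and then count units modulo $9$.

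\textbf{Step 1: the minimal polynomial of $\beta$.} Substituting $\theta_t=3\beta+1$ into $g_t$ and dividing by $27$ gives
\[
    h(X)=X^3+\frac{3-t}{3}X^2-\frac{5t+6}{9}X-\frac{2t+3}{27}.
\]
With $t=27n+12$ the coefficients are integers:
\[
    h(X)=X^3-(9n+3)X^2-(15n+6)X-(2n+1).
\]
So $\beta$ is integral, and $\ZZ[\beta]\subseteq\OO_t$. Its discriminant is $\Delta_t^2/3^6=d_t^2$. Since $d_t$ is squarefree and $c_t=d_t$, we get $\operatorname{disc}(\OO_t)=c_t^2=d_t^2$. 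Hence $\OO_t=\ZZ[\beta]$.

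\textbf{Step 2: the order $R_t$.} Expand
\[
    \theta_t=1+3\beta,\qquad \theta_t^2=1+6\beta+9\beta^2.
\]
These show that $R_t=\ZZ+3\ZZ\beta+9\ZZ\beta^2$, and the index of this lattice in $\OO_t$ is indeed $27$.

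\textbf{Step 3: the conductor.} First, $9\OO_t\subseteq R_t$, because $9\beta\in3\ZZ\beta$ and $9\beta^2\in R_t$ (this also uses $9\cdot1\in\ZZ$). Next, the conductor $\mathfrak f_t$ is an $\OO_t$-ideal containing $9\OO_t$. It cannot contain any $x$ with $x\notin 9\OO_t$ at the prime $3$. To see this, note that $3\mid\Delta_t$ only to the third power while $3\nmid d_t$, so $3$ is unramified and $\OO_t/3\OO_t$ is étale. If $\mathfrak f_t\supsetneq9\OO_t$ locally at some prime above $3$, then $\mathfrak f_t$ would contain $3e$, where $e$ is an idempotent lift, or $3$ times a unit. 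In either case $3\beta\cdot e$-type products must lie in $R_t$, and this fails because $R_t/9\OO_t$ is the image of $\ZZ+3\ZZ\beta$. A direct check will be to show that $3\OO_t\not\subseteq R_t$ (since $3\beta^2\notin R_t$) and to handle the split case by the idempotent components. Unramifiedness itself follows from $h\bmod 3=X^3-1\cdot0\ldots$; more simply, $3\nmid\operatorname{disc}(\OO_t)=d_t^2$.

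\textbf{Step 4: the residue-unit quotient.} In $A=\OO_t/9\OO_t$, the subring $S=R_t/9\OO_t=\{a+3b\beta : a\in\ZZ/9,\ b\in\ZZ/3\}$ has unit group of order $6\cdot3=18$. The unit group of $A$ is $(\OO_t/3)^\times\times(1+3\OO_t)/(1+9\OO_t)$, which is $(\OO_t/3)^\times\times C_3^3$. In the split case this has order $8\cdot27$, giving a quotient of order $12$. In the inert case it has order $26\cdot27$, giving a quotient of order $39$.

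For the structure, note first that $S^\times\cong(\ZZ/9)^\times\times(1+3b\beta)$. In the inert case, order $39$ forces $C_3\times C_{13}$. In the split case, $(\OO_t/3)^\times=C_2^3$ modulo the diagonal $\pm1$ gives $C_2^2$. The $3$-part is $C_3^3$ modulo the image of $1+3\ZZ$ and $1+3\ZZ\beta$. These are independent modulo $9$ because $1,\beta$ are independent mod $3$, so the $3$-part is $C_3$. This gives $C_3\times C_2^2$.

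\textbf{Main obstacle.} I expect Step 3 to be the main obstacle, namely showing that the conductor is exactly $9\OO_t$ rather than something between $9\OO_t$ and $3\OO_t$. I will first try the structural route: describe $R_t\otimes\ZZ_3=\ZZ_3+3\ZZ_3\beta+9A_3$ and show that any $A_3$-ideal $I$ with $9A_3\subsetneq I$ contains $3$ times a unit on some idempotent component. That element times $\beta^2$ would then give an element outside $R_t$.
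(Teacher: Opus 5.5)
Your overall plan matches the paper: discriminant comparison for $\OO_t=\ZZ[\beta]$, the change of basis for $R_t$, then counting residue units. Your Step~4 is correct. The paper runs the same count through the intermediate order $\ZZ+3\OO_t$ and a short exact sequence with coprime orders, which is equivalent to your primary decomposition.

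Two things need repair. First, the minimal polynomial in Step~1 is wrong. Substituting $\theta_t=3Y+1$ gives $27Y^3+(27-9t)Y^2-9tY-(2t+3)$. So the linear coefficient is $-t/3=-(9n+4)$, not $-(5t+6)/9$ (not even an integer at $t=27n+12$) and not $-(15n+6)$. Your displayed polynomial reduces modulo $3$ to $(X-(2n+1))^3$. That contradicts your own claim that its discriminant is $d_t^2$ with $3\nmid d_t$, and it is why your mod-$3$ remark trails off. The correct $h_n\equiv Y^3-Y-(2n+1)\pmod 3$ is separable. This gives unramifiedness directly and shows that $3$ splits exactly when $n\equiv1\pmod 3$. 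Integrality of $\beta$ and the discriminant argument are fine once the polynomial is corrected.

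Second, and more substantively, Step~3 is not yet a proof. In the inert case your observation $3\beta^2\notin R_t$ does finish it, because the only ideals of $A_3=\OO_t\otimes\ZZ_3$ strictly containing $9A_3$ are $3A_3$ and $A_3$. In the split case, however, the test you propose (multiply $3e$ by $\beta^2$) fails. There $\bar h_n=Y^3-Y$ has the root $0$. For the primitive idempotent $e$ at that root, $e\beta\in3A_3$, so $3e\beta^2\in9A_3\subseteq R_t\otimes\ZZ_3$. The missing point is to test $3e$ itself. An element of $R_t\otimes\ZZ_3$ lies in $3A_3$ exactly when it is in $3\ZZ_3+3\ZZ_3\beta+9\ZZ_3\beta^2$. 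Hence $3e\in R_t\otimes\ZZ_3$ would force $\bar e=a+b\bar\beta$ in $\FF_3^3$. That is impossible, because a polynomial of degree at most one vanishing at two distinct roots of $\bar h_n$ is zero.

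The paper avoids idempotents entirely with a direct coordinate check that never uses the splitting type. Write $z=r+3s\beta+9w\beta^2\in\mathfrak f_t$ and $\beta^3=c_2\beta^2+c_1\beta+c_0$. The $\beta$- and $\beta^2$-coefficients of $z\beta$ are $r+9wc_1$ and $3s+9wc_2$, so $3\mid r$ and $3\mid s$. The $\beta^2$-coefficient of $z\beta^2$ is then $\equiv r+3sc_2\equiv r\pmod 9$, so $9\mid r$. Thus $\mathfrak f_t=9\OO_t$ holds for formal reasons, for any monic integral cubic, and the local decomposition at $3$ is not needed for this step.
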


\begin{proof}
    Substitution of $\theta_t=3\beta+1$ into $g_t(\theta_t)=0$ gives the monic polynomial
    \begin{equation}
        h_n(Y)=Y^3-(9n+3)Y^2-(9n+4)Y-(2n+1)
        \label{eq:index27-polynomial}
    \end{equation}
    for $\beta$, so $\beta$ is integral. The coordinate matrix of the $\ZZ$-basis $(1,\theta_t,\theta_t^2)$ of $R_t$ relative to the $\ZZ$-basis $(1,\beta,\beta^2)$ of $\ZZ[\beta]$ has determinant $27$. Hence $[\ZZ[\beta]:R_t]=27$ and
    \[
        \operatorname{disc}(\ZZ[\beta])=\frac{\Delta_t^2}{27^2}=d_t^2=\operatorname{disc}(\OO_t).
    \]
    Since $\ZZ[\beta]\subseteq\OO_t$, equality of discriminants gives $\OO_t=\ZZ[\beta]$, and the same coordinate change gives the formula for $R_t$ in \eqref{eq:index27-orders}.

    The ideal $9\OO_t$ is contained in $R_t$, so $9\OO_t\subseteq\mathfrak f_t$. Conversely, write $z=r+3s\beta+9w\beta^2\in\mathfrak f_t$. From \eqref{eq:index27-polynomial},
    \[
        z\beta
        =9w(2n+1)+\bigl(r+9w(9n+4)\bigr)\beta
        +\bigl(3s+9w(9n+3)\bigr)\beta^2.
    \]
    The coefficient conditions in \eqref{eq:index27-orders}, applied to $z\beta\in R_t$, give $3\mid r$ and $3\mid s$. Moreover, the coefficient of $\beta^2$ in $z\beta^2$ is
    \[
        r+3s(9n+3)+9w\bigl((9n+3)^2+9n+4\bigr)
        \equiv r\pmod 9.
    \]
    Thus $z\beta^2\in R_t$ gives $9\mid r$. Each coefficient of $z$ in the basis $(1,\beta,\beta^2)$ is therefore divisible by $9$, so $z\in9\OO_t$. Hence $\mathfrak f_t=9\OO_t$.

    Since $d_t\equiv1\pmod3$ and $\operatorname{disc}(\OO_t)=d_t^2$, the rational prime $3$ is unramified. Because $K_t/\QQ$ is cyclic of degree three, it either splits completely or remains inert.

    More explicitly, \eqref{eq:index27-polynomial} reduces modulo $3$ to $Y^3-Y-(2n+1)$. Since $a^3-a=0$ for every $a\in\FF_3$, this polynomial has three distinct roots when $n\equiv1\pmod3$ and is an irreducible cubic otherwise. As $\OO_t=\ZZ[\beta]$, the prime $3$ therefore splits completely precisely when $n\equiv1\pmod3$, equivalently $t\equiv39\pmod{81}$; otherwise it is inert.

    Put $T_t=\ZZ+3\OO_t$. Every element of $R_t/9\OO_t$ has a unique representative $a+3b\beta$, where $a\in\ZZ/9\ZZ$ and $b\in\FF_3$. If $3\nmid a$, then this element is a unit because $a$ is a unit and $3b\beta$ is square-zero modulo $9\OO_t$. If $3\mid a$, then the whole representative is square-zero, so it is not a unit.

    Similarly, every element of $T_t/9\OO_t$ has a unique representative $a+3b\beta+3c\beta^2$, where $a\in\ZZ/9\ZZ$ and $b,c\in\FF_3$. It is a unit exactly when $3\nmid a$. Thus
    \[
        \#(R_t/9\OO_t)^\times=18,\qquad \#(T_t/9\OO_t)^\times=54,
    \]
    and $(T_t/9\OO_t)^\times/(R_t/9\OO_t)^\times\cong C_3$. The kernel of the reduction map
    \[
        (\OO_t/9\OO_t)^\times\longrightarrow(\OO_t/3\OO_t)^\times
    \]
    is $1+3\OO_t/9\OO_t$, which has order $3^3=27$. Moreover, $(T_t/9\OO_t)^\times$ is the inverse image of $\FF_3^\times$. Therefore reduction induces an isomorphism
    \[
        \frac{(\OO_t/9\OO_t)^\times}{(T_t/9\OO_t)^\times}
        \cong
        \frac{(\OO_t/3\OO_t)^\times}{\FF_3^\times}
        \cong
        \begin{cases}
            C_2^2,  & 3\text{ splits completely}, \\
            C_{13}, & 3\text{ is inert}.
        \end{cases}
    \]
    Indeed, the two residue algebras are $\FF_3^3$ and $\FF_{27}$, respectively. The inclusions $R_t\subseteq T_t\subseteq\OO_t$ give the short exact sequence
    \[
        1\longrightarrow C_3
        \longrightarrow\frac{(\OO_t/9\OO_t)^\times}{(R_t/9\OO_t)^\times}
        \longrightarrow
        \begin{cases}
            C_2^2,  & 3\text{ splits completely}, \\
            C_{13}, & 3\text{ is inert}
        \end{cases}
        \longrightarrow1.
    \]
    These groups are abelian, and the kernel and quotient have coprime orders. Their primary decompositions therefore give \eqref{eq:index27-residue-quotient}.
\end{proof}

\begin{proof}[Proof of \Cref{thm:index27}]
    The ring $R_t/9\OO_t$ is local because the representatives $a+3b\beta$ with $3\mid a$ are exactly its nonunits. Since $\mathfrak f_t=9\OO_t$, exactly one prime ideal of $R_t$ contains the conductor. Every other local factor in the change-of-order sequence is trivial.

    Because the finite quotient rings modulo $\mathfrak f_t$ are supported at the unique maximal ideal of $R_t$ containing $\mathfrak f_t$, localization at that maximal ideal does not change them or their unit groups. Kl\"uners--Pauli's conductor reduction \cite[Theorem~5.6 and Proposition~6.2]{KluenersPauli2005} therefore identifies the only nontrivial local factor with the group in \eqref{eq:index27-residue-quotient}. Thus $\OO_t^\times/R_t^\times$ embeds into that group.

    By \Cref{lem:odd-unit-index}, the index $\varepsilon_t$ is odd. The congruence $t\equiv12\pmod{27}$ in the criterion accompanying \eqref{eq:index27-criterion} and \Cref{lem:three-divisibility} show that $3\nmid\varepsilon_t$. Lagrange's theorem and \eqref{eq:index27-residue-quotient} now give
    \begin{equation}
        \varepsilon_t\in\{1,13\}.
        \label{eq:index27-unit-candidates}
    \end{equation}

    Since $n\geq0$, either $t=12$ or $t\geq39$. In the latter case,
    \[
        d_t-(t+3)=\frac{t^2-24t-72}{27}>0.
    \]
    Indeed, the numerator equals $513$ at $t=39$ and is strictly increasing for $t\geq39$. Equations~\eqref{eq:regulator-index}, \eqref{eq:order-regulator-upper}, and \eqref{eq:maximal-regulator-lower} imply
    \[
        \varepsilon_t<4\left(\frac{\log(t+3)}{\log(d_t/2)}\right)^2<4\left(\frac{\log(t+3)}{\log((t+3)/2)}\right)^2.
    \]
    The last logarithmic ratio decreases with $t$. Set $s=t+3$. Then
    \[
        \frac{\log s}{\log(s/2)}=1+\frac{\log2}{\log(s/2)}.
    \]
    At $t=39$ one has $\log42/\log21<5/4$, because this inequality is equivalent to $2^4<21$. Hence $\varepsilon_t<25/4<7$ for every $t\geq39$, and \eqref{eq:index27-unit-candidates} forces $\varepsilon_t=1$.

    It remains to treat $t=12$. Let $x=\theta_{-1}$ be the largest real root of $g_{-1}(X)=X^3+X^2-2X-1$ and set
    \[
        y=3x^2+6x+1.
    \]
    Direct reduction using $x^3=-x^2+2x+1$ gives $g_{12}(y)=0$. Since $g_{-1}(1)<0<g_{-1}(2)$, one has $x>1$ and hence $y>0$. Descartes' rule of signs shows that $g_{12}$ has exactly one positive root, so $y=\theta_{12}$. Because $g_{12}$ is irreducible, $\QQ(y)$ has degree three. As $\QQ(y)\subseteq K_{-1}$ and $[K_{-1}:\QQ]=3$, it follows that $K_{12}=\QQ(y)=K_{-1}$. The conductor formula gives $c_{-1}=\Delta_{-1}=7$, so $R_{-1}=\OO_{-1}=\OO_{12}$. The exact identities
    \[
        y=x^{-3}(x+1)^4,\qquad y+1=x(x+1)^3
    \]
    hold. Thomas's theorem makes $(x,x+1)$ a free-unit basis of $\OO_{12}^\times/\{\pm1\}$ and $(y,y+1)$ a free-unit basis of $R_{12}^\times/\{\pm1\}$. Relative to the first basis, the exponent vectors of the second are the columns of
    \[
        \begin{pmatrix}-3&1\\4&3\end{pmatrix}.
    \]
    Its determinant is $-13$, so \eqref{eq:regulator-index} gives $\varepsilon_{12}=13$.
\end{proof}

\section{\texorpdfstring{The case $N_t=p^3$ with $p\neq3$}{The case N(t) = p cubed with p not equal to 3}}\label{sec:prime-cube}

Let $p\neq3$ be a rational prime. We first give an exact criterion for $N_t=p^3$. To account for the factor at $3$, put
\[
    \nu_t=
    \begin{cases}
        1, & 3\nmid t, \\
        9, & 3\mid t.
    \end{cases}
\]

\begin{proposition}\label{prop:prime-cube-screen}
    Let $t\in\ZZ$ with $t\geq-1$, and let $p\neq3$ be a rational prime. Then $N_t=p^3$ if and only if
    \begin{equation}
        t\not\equiv3\pmod9,\quad
        p^3\mid\Delta_t,\quad
        \text{and }\ \frac{\Delta_t}{\nu_t p^3}\ \text{is squarefree}.
        \label{eq:prime-cube-screen}
    \end{equation}
    These conditions imply $p\equiv1\pmod3$ and
    \[
        \bigl(v_p(\Delta_t),v_p(c_t)\bigr)\in\{(3,0),(4,1)\}.
    \]
\end{proposition}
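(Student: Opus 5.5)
We will use $N_t=\Delta_t/c_t$ and compute $v_q(N_t)=v_q(\Delta_t)-v_q(c_t)$ prime by prime from Kashio--Sekigawa's valuation formulas \cite[Propositions~3.1--3.3]{KashioSekigawa2021}, in the form already used in the proof of \Cref{prop:squarefree-local-structure}. The condition $N_t=p^3$ then becomes three requirements: $v_3(N_t)=0$, $v_p(N_t)=3$, and $v_q(N_t)=0$ for every prime $q\neq3,p$. We treat these one at a time.

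\textbf{The prime $3$.} The pairs $(v_3(\Delta_t),v_3(c_t))$ split by residue of $t$. If $3\nmid t$, then $3\nmid\Delta_t$ and both valuations vanish. If $3\mid t$ and $t\not\equiv12\pmod{27}$, Propositions~3.1(i) and~3.2 give $e_\Delta\in\{2,3\}$ with $b_c=2$. The case $t\equiv3\pmod9$ is exactly where $v_3(\Delta_t)\geq3$ (indeed $\Delta_t=t^2+3t+9\equiv 27\pmod{81}$-type behaviour there). In that case $v_3(N_t)\geq1$: we get $v_3(N_t)=1$ if $t\equiv3,21\pmod{27}$, and $v_3(N_t)=3$ if $t\equiv12\pmod{27}$. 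For $3\mid t$ with $t\not\equiv3\pmod 9$ we get $v_3(\Delta_t)=2=v_3(c_t)$. Hence $v_3(N_t)=0$ if and only if $t\not\equiv3\pmod9$, and then $v_3(\Delta_t)=v_3(\nu_t)$. We will check this explicitly from $\Delta_t\bmod 27$ rather than only cite it.

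\textbf{Primes $q\neq3$.} For such $q$ we have $v_q(c_t)\in\{0,1\}$, with $v_q(c_t)=0$ exactly when $e_\Delta:=v_q(\Delta_t)\equiv0\pmod3$, and $v_q(c_t)=1$ otherwise. Hence $v_q(N_t)=e_\Delta-[3\nmid e_\Delta]$. This vanishes only for $e_\Delta\in\{0,1\}$. It equals $3$ only for $e_\Delta\in\{3,4\}$, since $e_\Delta=3$ gives $3-0$, $e_\Delta=4$ gives $4-1$, and all other $e_\Delta\geq2$ give values different from $0$ and $3$.

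\textbf{Assembling the criterion.} Requiring $v_q(N_t)=0$ for $q\neq3,p$ means $v_q(\Delta_t)\leq1$. Requiring $v_p(N_t)=3$ means $v_p(\Delta_t)\in\{3,4\}$, which yields the stated pairs $(3,0),(4,1)$. Combined with the prime-$3$ analysis, these conditions together say exactly that $t\not\equiv3\pmod9$, that $p^3\mid\Delta_t$, and that $\Delta_t/(\nu_tp^3)$ is squarefree. Here $v_p(\Delta_t)=4$ leaves one factor $p$ in the quotient, which is allowed, and $v_p\geq5$ is excluded by squarefreeness. The converse direction follows by reading the same table backwards.

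\textbf{The congruence $p\equiv1\pmod3$.} From $p\mid\Delta_t$ with $p\neq3$, and $p\neq2$ since $\Delta_t$ is odd, the polynomial $T^2+3T+9$ has a root mod $p$. Its discriminant is $-27$, so $-3$ is a square mod $p$, which gives $p\equiv1\pmod3$ by quadratic reciprocity.

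The main obstacle is the bookkeeping at $3$: pinning down that $v_3(N_t)=0$ is equivalent to $t\not\equiv3\pmod9$, and that the leftover $3$-part of $\Delta_t$ is then exactly $\nu_t$. We will verify directly that $9\mid\Delta_t$ iff $3\mid t$, and that $27\mid\Delta_t$ iff $t\equiv3\pmod9$, by writing $t=3m$ so that $\Delta_t=9(m^2+m+1)$. We will then match this against the Kashio--Sekigawa conductor exponents.
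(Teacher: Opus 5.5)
Your proposal is correct and follows essentially the same route as the paper. Both arguments compare $v_q(\Delta_t)$ prime by prime with the Kashio--Sekigawa conductor exponents, using that $v_3(N_t)=0$ exactly when $t\not\equiv3\pmod9$ and that $v_q(N_t)\in\{0,3\}$ for $q\neq3$ forces $v_q(\Delta_t)\in\{0,1\}$ or $\{3,4\}$. The only difference is cosmetic: you get $p\equiv1\pmod3$ from $-3$ being a square modulo $p$ via quadratic reciprocity, whereas the paper observes that $t/3$ has order three in $\FF_p^\times$.
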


\begin{proof}
    Write $e_{\Delta,q}=v_q(\Delta_t)$. Kashio--Sekigawa's conductor formula \cite[Propositions~3.1--3.3]{KashioSekigawa2021} gives, for every rational prime $q\neq3$,
    \[
        v_q(N_t)=
        \begin{cases}
            e_{\Delta,q},   & e_{\Delta,q}\equiv0\pmod3,     \\
            e_{\Delta,q}-1, & e_{\Delta,q}\not\equiv0\pmod3,
        \end{cases}
    \]
    whereas $v_3(N_t)=0$ if and only if $t\not\equiv3\pmod9$. The index formulas of Hoshi and Iida \cite[Theorem~1.1 and Corollary~1.2, p.~2]{HoshiIida2026} also give these conditions. It follows that $v_p(N_t)=3$ is equivalent to $e_{\Delta,p}\in\{3,4\}$. For each rational prime $q\neq3,p$, the condition $v_q(N_t)=0$ is equivalent to $e_{\Delta,q}\in\{0,1\}$. Under $t\not\equiv3\pmod9$, the $3$-part of $\Delta_t$ is exactly $\nu_t$. Together these statements prove both directions of \eqref{eq:prime-cube-screen}. Since $c_t=\Delta_t/N_t$, they also give the stated valuation pairs.

    The integer $\Delta_t$ is odd, so $p\neq2$. Let $u=t\,3^{-1}\in\FF_p$. Then $u^2+u+1=0$. As $p\neq3$, one has $u\neq1$, so $u$ has order three in $\FF_p^\times$. Hence $p\equiv1\pmod3$ and $p\geq7$. Finally,
    \[
        (2t+3)^2=4\Delta_t-27
    \]
    shows that $p\nmid2t+3$. The two roots of $T^2+3T+9$ modulo $p$ are therefore simple and lift uniquely to roots modulo each power of $p$.
\end{proof}

\subsection{Integral bases, conductors, and residue units}

Throughout this subsection, assume $N_t=p^3$ with $p\neq3$ a rational prime, and put $A_p=\OO_t\otimes\ZZ_p$ and $S_p=R_t\otimes\ZZ_p$.

\begin{proposition}\label{prop:prime-cube-orders}
    Choose $a\in\ZZ$ with $3a\equiv t\pmod p$, and put $\gamma=(\theta_t-a)/p$. Then
    \begin{equation}
        \OO_t=\ZZ[\gamma],\qquad
        R_t=\ZZ+p\ZZ\gamma+p^2\ZZ\gamma^2,\qquad
        \mathfrak f_t=p^2\OO_t.
        \label{eq:prime-cube-orders}
    \end{equation}
    More precisely, for $\alpha=(\theta_t-t/3)/p$ in $K_t\otimes\QQ_p$, one has
    \[
        A_p=\ZZ_p\oplus\ZZ_p\alpha\oplus\ZZ_p\alpha^2,
        \qquad
        S_p=\ZZ_p\oplus p\ZZ_p\alpha\oplus p^2\ZZ_p\alpha^2.
    \]
\end{proposition}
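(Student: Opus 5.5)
Substituting $\theta_t=a+p\gamma$ into $g_t$ gives a monic cubic for $\gamma$ with integer coefficients, provided $p^3\mid g_t(a)$, $p^2\mid g_t'(a)$, and $p\mid g_t''(a)/2=3a-t$. I will therefore first check that $\gamma$ is integral. The last condition holds by the choice of $a$. For the other two, I compute $g_t(X+t/3)$ in $\ZZ[1/3][X]$. The depressed cubic is
\[
    g_t(X+t/3)=X^3-\frac{\Delta_t}{3}X-\frac{(2t+3)\Delta_t}{27},
\]
where the linear coefficient is $-(t^2/3+t+3)=-\Delta_t/3$ and the constant is $-(2t^3+9t^2+27t+27)/27=-(2t+3)\Delta_t/27$. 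Since $p^3\mid\Delta_t$ by \Cref{prop:prime-cube-screen}, the polynomial $\alpha=(\theta_t-t/3)/p$ satisfies $\alpha^3-(\Delta_t/3p^2)\alpha-(2t+3)\Delta_t/(27p^3)=0$. This polynomial has $p$-integral coefficients, so $\alpha\in A_p$. Because $a-t/3\in p\ZZ_p$, we have $\gamma=\alpha+(t/3-a)/p\in\ZZ_p+\ZZ_p\alpha$, so $\gamma$ is integral at $p$. At primes $\ell\neq p$, $\gamma\in R_t\otimes\ZZ_\ell$ trivially. Hence $\gamma\in\OO_t$.

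Next, the index. The transition matrix from $(1,\theta_t,\theta_t^2)$ to $(1,\gamma,\gamma^2)$ is triangular with diagonal $1,p,p^2$. Hence $[\ZZ[\gamma]:R_t]=p^3=N_t$, and $R_t\subseteq\ZZ[\gamma]\subseteq\OO_t$ forces $\OO_t=\ZZ[\gamma]$. The same triangular change of basis shows $R_t=\ZZ+p\ZZ\gamma+p^2\ZZ\gamma^2$. Explicitly, $\theta_t=a+p\gamma$ and $\theta_t^2=a^2+2ap\gamma+p^2\gamma^2$, and the span of $1,p\gamma,p^2\gamma^2$ contains these and has the same index. Tensoring with $\ZZ_p$ and replacing $\gamma$ by $\alpha$ (which differs from $\gamma$ by an element of $\ZZ_p$) gives the local descriptions $A_p=\ZZ_p[\alpha]$ and $S_p=\ZZ_p\oplus p\ZZ_p\alpha\oplus p^2\ZZ_p\alpha^2$.

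For the conductor, $p^2\OO_t\subseteq R_t$ is immediate from the basis, so $p^2\OO_t\subseteq\mathfrak f_t$. For the reverse inclusion I argue locally at $p$; away from $p$ both sides are the unit ideal. I take $z=r+ps\alpha+p^2w\alpha^2\in\mathfrak f_tA_p$ and impose $z\alpha,z\alpha^2\in S_p$. Write $\alpha^3=B\alpha+C$, where $B=\Delta_t/(3p^2)\in p\ZZ_p$ and $C\in\ZZ_p$. Then
\[
    z\alpha=p^2wC+(r+p^2wB)\alpha+ps\alpha^2.
\]
The $\alpha$-coefficient must lie in $p\ZZ_p$, which gives $p\mid r$. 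The $\alpha^2$-coefficient must lie in $p^2\ZZ_p$, which gives $p\mid s$. For $z\alpha^2$, the $\alpha^2$-coefficient is $r+p^2wB+psB\equiv r\pmod{p^2}$, since $p\mid B$. This forces $p^2\mid r$. Then all coordinates of $z$ in $(1,\alpha,\alpha^2)$ are divisible by $p^2$, so $z\in p^2A_p$. Hence $\mathfrak f_t=p^2\OO_t$.

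The main delicate point is the integrality of $\alpha$, and specifically whether $p^3\mid(2t+3)\Delta_t/27$ and $p^2\mid\Delta_t/3$. Both follow at once from $v_p(\Delta_t)\geq3$ and $p\neq3$, so the closed-form depressed cubic makes this routine. Once integrality is in hand, the remaining steps are index and coefficient bookkeeping modeled on \Cref{prop:index27-local-structure}.
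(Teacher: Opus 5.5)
Your proof is correct and is essentially the paper's: you use the same depressed cubic for $\alpha$, the same prime-by-prime integrality check for $\gamma$, and the same coefficient bookkeeping for the conductor, except that you get $\OO_t=\ZZ[\gamma]$ from $[\ZZ[\gamma]:R_t]=p^3=N_t$ rather than from $\operatorname{disc}(\ZZ[\gamma])=\Delta_t^2/p^6=c_t^2$, which is an equivalent check since $N_t=\Delta_t/c_t$. One harmless slip: the $\alpha^2$-coefficient of $z\alpha^2$ is $r+p^2wB$ (the term $psB$ belongs to the $\alpha$-coefficient), and your conclusion $p^2\mid r$ is unaffected.
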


\begin{proof}
    Kashio--Sekigawa's formula \cite[Theorem~1.1, formula~(3), and Remark~1.2, p.~291]{KashioSekigawa2021} gives the global power basis by specialization. We first verify the local basis, then compute the local conductor and pass to the global statements. Translation by $t/3$ and division by $p$ give the monic polynomial
    \begin{equation}
        h_{t,p}(U)=U^3-\frac{\Delta_t}{3p^2}U
        -\frac{(2t+3)\Delta_t}{27p^3}
        \label{eq:prime-cube-polynomial}
    \end{equation}
    for $\alpha$. By \Cref{prop:prime-cube-screen}, its coefficients lie in $\ZZ_p$. The polynomial is monic, so $\alpha$ is integral over $\ZZ_p$. The integral closure of $\ZZ_p$ in $K_t\otimes\QQ_p$ is $A_p$; hence $\ZZ_p[\alpha]\subseteq A_p$.

    The discriminant of $h_{t,p}$ is $\Delta_t^2/p^6=c_t^2$. Consequently, the discriminant ideals of $\ZZ_p[\alpha]$ and $A_p$ have the same $p$-adic valuation. Since $\alpha$ generates $K_t\otimes\QQ_p$ as a $\QQ_p$-algebra, $\ZZ_p[\alpha]$ is a full $\ZZ_p$-lattice in $A_p$. The local index--discriminant formula therefore gives $A_p=\ZZ_p[\alpha]$. Since $\theta_t=t/3+p\alpha$, changing between the bases $(1,\theta_t,\theta_t^2)$ and $(1,\alpha,\alpha^2)$ gives the asserted formula for $S_p$.

    Next, we compute the local conductor $(S_p:A_p)$. The ideal $p^2A_p$ is contained in $S_p$, so it is contained in this conductor. Conversely, write an element of the conductor as
    \[
        z=r+ps\alpha+p^2w\alpha^2,\qquad r,s,w\in\ZZ_p,
    \]
    and write $\alpha^3=b\alpha+c$ with $b,c\in\ZZ_p$, as in \eqref{eq:prime-cube-polynomial}. Then
    \[
        z\alpha=p^2wc+(r+p^2wb)\alpha+ps\alpha^2.
    \]
    Since $z\alpha\in S_p$, the coefficient conditions give $p\mid r$ and $p\mid s$. Also
    \[
        z\alpha^2=psc+(psb+p^2wc)\alpha+(r+p^2wb)\alpha^2.
    \]
    Membership in $S_p$ now gives $p^2\mid r$. Thus all three coefficients of $z$ in the basis $(1,\alpha,\alpha^2)$ are divisible by $p^2$, and $(S_p:A_p)=p^2A_p$.

    It remains to pass from the local basis to the asserted global generator $\gamma$. At $p$, the difference $\gamma-\alpha=(t/3-a)/p$ belongs to $\ZZ_p$. Since $\alpha$ is integral over $\ZZ_p$, so is $\gamma$. At every rational prime $q\neq p$, the denominator $p$ is a unit in $\ZZ_q$, so $\gamma$ is integral over $\ZZ_q$. An element of $K_t$ belongs to $\OO_t$ precisely when it is integral at every rational prime. Hence $\gamma\in\OO_t$.

    The discriminant of its power basis is $\Delta_t^2/p^6=c_t^2$. The index--discriminant formula therefore gives $\OO_t=\ZZ[\gamma]$. Finally, $\theta_t=a+p\gamma$ gives the displayed lattice for $R_t$ in \eqref{eq:prime-cube-orders}.

    At rational primes $q\neq p$, the two orders agree because $N_t=p^3$, and $p^2$ is a unit. Thus the global conductor and $p^2\OO_t$ have the same localization at every rational prime. They are equal.
\end{proof}

\begin{proposition}\label{prop:prime-cube-unit-quotient}
    There is a natural injection
    \[
        \OO_t^\times/R_t^\times\lhook\joinrel\longrightarrow
        Q_{t,p}:=\frac{(A_p/p^2A_p)^\times}{(S_p/p^2A_p)^\times}.
    \]
    The order of $Q_{t,p}$ is
    \begin{equation}
        \#Q_{t,p}=
        \begin{cases}
            p(p-1)^2,   & p\text{ splits completely in }K_t,   \\
            p(p^2+p+1), & p\text{ is inert in }K_t,            \\
            p^3,        & p\text{ is totally ramified in }K_t.
        \end{cases}
        \label{eq:prime-cube-unit-quotient}
    \end{equation}
    In the two unramified cases, respectively, one has
    \[
        Q_{t,p}\cong C_p\times C_{p-1}^2
        \qquad\text{or}\qquad
        Q_{t,p}\cong C_p\times C_{p^2+p+1}.
    \]
\end{proposition}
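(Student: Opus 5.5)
The plan follows the pattern of the index-$27$ computation in \Cref{prop:index27-local-structure}, with $p$ in place of $3$.

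\emph{The injection.} By \Cref{prop:prime-cube-orders}, the conductor is $\mathfrak f_t=p^2\OO_t$. Every prime of $R_t$ containing $\mathfrak f_t$ lies above $p$. The change-of-order exact sequence of Kl\"uners--Pauli \cite[Theorem~5.6 and Proposition~6.2]{KluenersPauli2005} embeds $\OO_t^\times/R_t^\times$ into
\[
    (\OO_t/\mathfrak f_t)^\times/(R_t/\mathfrak f_t)^\times .
\]
Since $\OO_t/p^2\OO_t\cong A_p/p^2A_p$ and $R_t/p^2\OO_t\cong S_p/p^2A_p$, this target is exactly $Q_{t,p}$. Naturality follows because the map is induced by the inclusion $\OO_t\to A_p$.

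\emph{The intermediate order.} Put $T_p=\ZZ_p+pA_p$. From the explicit lattice $S_p=\ZZ_p\oplus p\ZZ_p\alpha\oplus p^2\ZZ_p\alpha^2$, we get $S_p\subseteq T_p\subseteq A_p$.

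\emph{Counting $(S_p/p^2A_p)^\times$.} Elements of $S_p/p^2A_p$ are $a+pb\alpha$ with $a\in\ZZ/p^2$ and $b\in\FF_p$. The term $pb\alpha$ squares to zero modulo $p^2A_p$, so such an element is a unit iff $p\nmid a$. This gives $p(p-1)\cdot p$ units.

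\emph{Counting $(T_p/p^2A_p)^\times$.} Elements are $a+pb\alpha+pc\alpha^2$, again a unit iff $p\nmid a$. This gives $p(p-1)p^2$ units. Hence the quotient $(T_p/p^2A_p)^\times/(S_p/p^2A_p)^\times$ has order $p$. It is isomorphic to $C_p$, generated by the class of $1+p\alpha^2$. It is a quotient of the elementary abelian group $1+pA_p/p^2A_p$, so it is automatically elementary abelian, and having order $p$ it is cyclic.

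\emph{The upper quotient.} Reduction modulo $p$ gives a surjection $(A_p/p^2A_p)^\times\to(A_p/pA_p)^\times$ with kernel $1+pA_p/p^2A_p$. Under this map $(T_p/p^2A_p)^\times$ is the full inverse image of $\FF_p^\times$. Therefore
\[
    \frac{(A_p/p^2A_p)^\times}{(T_p/p^2A_p)^\times}\cong(A_p/pA_p)^\times/\FF_p^\times .
\]
The residue algebra $A_p/pA_p$ is $\FF_p^3$, $\FF_{p^3}$, or $\FF_p[\pi]/(\pi^3)$ in the split, inert, and totally ramified cases. The corresponding quotient has order $(p-1)^2$, $p^2+p+1$, or $p^2$. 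Multiplying by the factor $p$ gives \eqref{eq:prime-cube-unit-quotient}.

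\emph{Group structure in the unramified cases.} The quotient is $C_{p-1}^2$ in the split case, since $(\FF_p^\times)^3/\text{diag}$ is cyclic-by-coordinates. It is $C_{p^2+p+1}$ in the inert case, as a quotient of the cyclic group $\FF_{p^3}^\times$. We then have a short exact sequence of abelian groups with kernel $C_p$ and a quotient of order prime to $p$. It splits, giving the two displayed decompositions.

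\emph{Main obstacle.} The delicate point is verifying that $(T_p/p^2A_p)^\times$ is precisely the inverse image of $\FF_p^\times$, i.e.\ that $T_p\supseteq p A_p$ and that $T_p/pA_p=\FF_p$. Both follow directly from the definition $T_p=\ZZ_p+pA_p$. The remaining care is in the ramified case, where only the order is claimed, so no structural analysis of $(\FF_p[\pi]/(\pi^3))^\times/\FF_p^\times$ is needed.
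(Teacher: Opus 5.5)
Your proposal is correct and follows essentially the same route as the paper. Both arguments get the injection from the conductor $p^2\OO_t$ and the Kl\"uners--Pauli sequence, count $\#(S_p/p^2A_p)^\times=p^2(p-1)$, use the intermediate order $T_p=\ZZ_p+pA_p$ to obtain the exact sequence with kernel $C_p$ and quotient $(A_p/pA_p)^\times/\FF_p^\times$, and then evaluate the three residue algebras. The differences are cosmetic: you read off $\#Q_{t,p}$ from this filtration rather than by counting $(A_p/p^2A_p)^\times$ directly, and you identify $A_p/pA_p\cong\FF_p[\pi]/(\pi^3)$ from the totally ramified local structure rather than from the Eisenstein polynomial $h_{t,p}$.
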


\begin{proof}
    By \Cref{prop:prime-cube-orders}, the conductor is $\mathfrak f_t=p^2\OO_t$. The change-of-order sequence \cite[Theorem~5.6 and Proposition~6.2]{KluenersPauli2005} therefore gives the injection. Its kernel assertion can also be checked directly: if $u\in\OO_t^\times$ reduces to a unit of $R_t/\mathfrak f_t$, then both $u$ and $u^{-1}$ are congruent modulo $\mathfrak f_t\subseteq R_t$ to elements of $R_t$. Hence $u\in R_t^\times$. All residue factors away from $p$ are trivial, and completion at $p$ does not change these finite residue rings.

    An element of $S_p/p^2A_p$ has a unique representative $r+ps\alpha$, where $r\in\ZZ/p^2\ZZ$ and $s\in\FF_p$. It is a unit exactly when $p\nmid r$: the term $ps\alpha$ is square-zero, and when $p\mid r$ the whole element is square-zero. Thus
    \[
        \#(S_p/p^2A_p)^\times=p^2(p-1).
    \]
    Reduction modulo $pA_p$ is surjective on units and has kernel $(1+pA_p)/(1+p^2A_p)\cong(pA_p/p^2A_p,+)$, of order $p^3$.

    Since $K_t/\QQ$ is cyclic of degree three, $p$ either splits completely, remains inert, or is totally ramified. In the first two cases, $A_p/pA_p$ is $\FF_p^3$ or $\FF_{p^3}$. In the ramified case, \Cref{prop:prime-cube-screen} gives $v_p(\Delta_t)=4$. The polynomial \eqref{eq:prime-cube-polynomial} is then Eisenstein: its linear coefficient has valuation two and its constant coefficient has valuation one, because $p\nmid2t+3$. Its reduction is $U^3$, so $A_p/pA_p\cong\FF_p[U]/(U^3)$. The three residue algebras have unit groups of orders $(p-1)^3$, $p^3-1$, and $p^2(p-1)$, respectively. Multiplying by $p^3$ and dividing by $p^2(p-1)$ proves \eqref{eq:prime-cube-unit-quotient}.

    To determine the unramified group structures, put $T_p=\ZZ_p+pA_p$. The group $(T_p/p^2A_p)^\times$ is the inverse image of the scalar subgroup $\FF_p^\times$ under reduction. It follows that
    \[
        1\longrightarrow
        \frac{(T_p/p^2A_p)^\times}{(S_p/p^2A_p)^\times}
        \longrightarrow Q_{t,p}
        \longrightarrow\frac{(A_p/pA_p)^\times}{\FF_p^\times}
        \longrightarrow1
    \]
    is exact. Dividing a unit of $T_p/p^2A_p$ by its constant coefficient writes it as $1+pb\alpha+pc\alpha^2$ with $b,c\in\FF_p$. Modulo $(S_p/p^2A_p)^\times$, only $c$ remains, and multiplication adds these coefficients. The kernel is therefore $C_p$. The right-hand group is $C_{p-1}^2$ in the split case and $C_{p^2+p+1}$ in the inert case. Its order is prime to $p$, so the primary decomposition of a finite abelian group gives the asserted direct products.
\end{proof}

The factor $p^2+p+1$ in the inert case is the order of $\FF_{p^3}^\times/\FF_p^\times$. In particular, $19\mid7^2+7+1=57$. Thus the local quotient can have an element of order $19$ when $p=7$. A separate argument is needed to show that a global unit represents such a class. Although $3^2+3+1=13$, the local basis construction uses $3^{-1}\in\ZZ_p$, so it does not apply when $p=3$; Section~\ref{sec:index27} supplies the separate calculation.

\subsection{\texorpdfstring{Finite reduction and the case $N_t=343$}{Finite reduction and the case of additive index 343}}

\begin{corollary}\label{cor:prime-cube-finite}
    Fix a rational prime $p\neq3$. If $N_t=p^3$ and $\varepsilon_t>1$, then $0\leq t<2p^3$ and $p\equiv1\pmod3$. For such a prime $p$, let $r_1,r_2\in\{0,\ldots,p^3-1\}$ be the two roots of $T^2+3T+9$ modulo $p^3$. Every exceptional parameter belongs to
    \[
        \{r_1,r_2,r_1+p^3,r_2+p^3\}.
    \]
    Each of these candidates must still satisfy the conditions in \eqref{eq:prime-cube-screen}.
\end{corollary}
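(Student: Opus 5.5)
The plan is to combine \Cref{prop:uniform-unit-bound} with \Cref{prop:prime-cube-screen} and Hensel lifting. Suppose $N_t=p^3$ and $\varepsilon_t>1$. The contrapositive of \Cref{prop:uniform-unit-bound} gives $t<2N_t=2p^3$. Together with $t\geq-1$, this leaves the range $-1\leq t<2p^3$.

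Next we rule out $t=-1$. The conductor formula gives $c_{-1}=\Delta_{-1}=7$, so $N_{-1}=1\neq p^3$. Hence $0\leq t<2p^3$. The congruence $p\equiv1\pmod3$ is already one of the conclusions of \Cref{prop:prime-cube-screen}, so nothing more is needed for it.

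For the candidate list we use the condition $p^3\mid\Delta_t$ from \eqref{eq:prime-cube-screen}. The end of the proof of \Cref{prop:prime-cube-screen} shows that $p\nmid 2t+3=\Delta'(t)$ whenever $p\mid\Delta_t$. Since $p\equiv1\pmod3$ and $p\neq3$, the discriminant $-27$ of $\Delta(T)=T^2+3T+9$ is a nonzero square modulo $p$. So $\Delta$ has exactly two distinct simple roots modulo $p$. Hensel's lemma lifts each of them uniquely to a root modulo $p^3$; call these lifts $r_1\neq r_2$ in $\{0,\ldots,p^3-1\}$.

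Now $t$ must be congruent to $r_1$ or $r_2$ modulo $p^3$. The only integers in $[0,2p^3)$ with these residues are $r_1,r_2,r_1+p^3,r_2+p^3$. The remaining conditions of \eqref{eq:prime-cube-screen} stay as stated requirements, because $N_t=p^3$ is equivalent to them. There is no real obstacle here; the only care needed is to confirm that the roots modulo $p$ are simple, so that the lifts are unique and there are exactly two residue classes.
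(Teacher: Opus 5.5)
Your proposal is correct and follows essentially the same route as the paper: \Cref{prop:uniform-unit-bound} gives $t<2p^3$, the case $t=-1$ is excluded because $N_{-1}$ cannot be $p^3$, and \Cref{prop:prime-cube-screen} supplies $p\equiv1\pmod3$ and the simplicity of the two roots. Hensel lifting of those roots then yields the four candidates in $[0,2p^3)$. The only cosmetic difference is that the paper excludes $t=-1$ directly from $\Delta_{-1}=7$, whereas you use $c_{-1}=7$ to get $N_{-1}=1$.
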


\begin{proof}
    Since $\Delta_{-1}=7$, the index $N_{-1}$ cannot be a prime cube. The bound $t<2p^3$ follows from \Cref{prop:uniform-unit-bound}. Proposition~\ref{prop:prime-cube-screen} gives $p\equiv1\pmod3$ and two simple roots modulo $p$. Hensel lifting gives exactly two roots modulo $p^3$, each with two representatives in $[0,2p^3)$. The divisibility $p^3\mid\Delta_t$ alone need not imply $N_t=p^3$, so the other conditions in \eqref{eq:prime-cube-screen} must still be checked.
\end{proof}

\begin{corollary}\label{cor:prime-cube-certificate}
    Suppose that $N_t=p^3$ with $p\neq3$ a rational prime. If $c_t^5>32(t+3)^4$, then $\varepsilon_t=1$.
\end{corollary}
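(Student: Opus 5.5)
We combine the regulator comparison with the fact, from the structure of $Q_{t,p}$, that $\varepsilon_t$ cannot be too small once it exceeds $1$. By \eqref{eq:regulator-index} and \Cref{lem:regulator-estimates}, for $t>1$,
\[
    \varepsilon_t=\frac{\Reg(R_t)}{\Reg(\OO_t)}<\frac{4\log^2(t+3)}{\log^2(c_t/2)}.
\]
The hypothesis $c_t^5>32(t+3)^4$ is equivalent to $(c_t/2)^5>(t+3)^4$, that is, $\log(c_t/2)>\tfrac45\log(t+3)$. Here $c_t>2$, since $c_t^5>32\cdot 16$, so $\log(c_t/2)>0$. Substituting gives
\[
    \varepsilon_t<4\cdot\frac{25}{16}=\frac{25}{4}<7.
\]
The small parameters are harmless: $N_{-1}=N_0=N_1=1$ are not prime cubes, so $t\geq2$ and the lemma applies.

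It then suffices to show that $\varepsilon_t\notin\{2,3,4,5,6\}$ whenever $N_t=p^3$. By \Cref{lem:odd-unit-index}, $\varepsilon_t$ is odd, which excludes $2$, $4$ and $6$. By \Cref{lem:three-divisibility}, $3\mid\varepsilon_t$ forces $t=3$; but $N_3=3$ is not a prime cube, so $3$ is excluded. By \Cref{lem:unit-index-inert-primes}, $\varepsilon_t\neq5$. Hence $\varepsilon_t=1$.

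Neither the injection into $Q_{t,p}$ nor the congruence $p\equiv1\pmod3$ is needed. The main points to check are that the strict inequalities survive the substitution, and that Cusick's bound \eqref{eq:maximal-regulator-lower} applies with $c_t/2>1$, which we verified above.
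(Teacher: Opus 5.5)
Your proof is correct and follows the paper's argument. You use the regulator comparison to get $\varepsilon_t<25/4$, then oddness, \Cref{lem:three-divisibility} (with $N_3=3$) and \Cref{lem:unit-index-inert-primes} to exclude $3$ and $5$. The only differences are cosmetic: you get $t\geq2$ from $N_{-1}=N_0=N_1=1$, whereas the paper gets $t>3$ from $N_t\geq343$. Also, your opening reference to $Q_{t,p}$ is unnecessary, as you note yourself.
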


\begin{proof}
    By \Cref{prop:prime-cube-screen}, one has $p\geq7$, so $N_t=p^3\geq343$. The polynomial $\Delta_t=t^2+3t+9$ is strictly increasing for $t\geq-1$, and $\Delta_3=27$. Since $N_t\leq\Delta_t$, it follows that $t>3$. The assumed inequality gives $\log(c_t/2)>\tfrac45\log(t+3)>0$. The regulator bounds therefore give
    \[
        \varepsilon_t<\frac{4\log^2(t+3)}{\log^2(c_t/2)}<\frac{25}{4}.
    \]
    The only positive odd integers below $25/4$ are $1,3,5$. Lemma~\ref{lem:three-divisibility} excludes $3$, because that value would require $t=3$, whereas $N_3=3\neq p^3$. Lemma~\ref{lem:unit-index-inert-primes} excludes $5$.
\end{proof}

\begin{proof}[Proof of \Cref{thm:index343}]
    The two roots of $T^2+3T+9$ modulo $343$ are $54$ and $286$. Indeed, they reduce to the two simple roots $5$ and $6$ modulo $7$, and direct substitution gives divisibility by $343$. Corollary~\ref{cor:prime-cube-finite} leaves only the four candidates in the table below. None is congruent to $3$ modulo $9$, and each quotient $\Delta_t/(\nu_t\cdot343)$ is squarefree. Thus all four candidates have exact index $343$ by \Cref{prop:prime-cube-screen}, and $c_t=\Delta_t/343$ gives the conductors in the last column.
    \[
        \begin{array}{c|c|c}
            t   & \Delta_t/(\nu_t\cdot343) & c_t  \\
            \hline
            54  & 1                        & 9    \\
            286 & 241                      & 241  \\
            397 & 463                      & 463  \\
            629 & 1159=19\cdot61           & 1159
        \end{array}
    \]

    For real $t\geq286$, put $d_{343}(t)=\Delta_t/686$. At $t=286$, one has $d_{343}(t)=241/2>100$ and $t+3=289<300$, while $100^5>300^4$. Moreover,
    \[
        \frac{d}{dt}\bigl(5\log d_{343}(t)-4\log(t+3)\bigr)
        =\frac{6t^2+33t+9}{\Delta_t(t+3)}>0.
    \]
    It follows that $d_{343}(t)^5>(t+3)^4$ for every real $t\geq286$. If $N_t=343$ and $t\geq286$, then $d_{343}(t)=c_t/2$, so \Cref{cor:prime-cube-certificate} gives $\varepsilon_t=1$. This proves the claim for the three larger candidates.

    For $t=54$, let $x=\theta_0$ and set
    \[
        y=7x^2+14x+4.
    \]
    Reduction using $x^3=3x+1$ gives
    \[
        g_{54}(y)=0,\qquad
        y=x^3(x+1)^2,\qquad
        y+1=x^{-2}(x+1)^5.
    \]
    Since $x>1$, one has $y>0$. The unique positive root of $g_{54}$ is therefore $y=\theta_{54}$. Irreducibility gives $K_{54}=K_0$, in agreement with Hoshi's theorem \cite[Theorem~1.4]{Hoshi2011}. Since $R_0=\OO_0$, Thomas's theorem makes $(x,x+1)$ a free-unit basis of $\OO_{54}^\times/\{\pm1\}$. Relative to this basis, the exponent vectors of $(y,y+1)$ are the columns of
    \[
        \begin{pmatrix}3&-2\\2&5\end{pmatrix}.
    \]
    Its determinant is $19$, so $\varepsilon_{54}=19$. Discriminant comparison also gives $N_{54}=\Delta_{54}/c_0=3087/9=343$.
\end{proof}

In particular, $\varepsilon_{54}=19$ does not divide $N_{54}=343$. Thus the divisibility $\varepsilon_t\mid N_t$ in \Cref{prop:squarefree-local-structure} cannot be extended to arbitrary nonsquarefree additive indices.

\section{\texorpdfstring{The case $13\mid\varepsilon_t$}{The case 13 div epsilon}}\label{app:thirteenth}

We first determine the rational points on the normalization of Louboutin's quartic. An explicit $2$-descent shows that the Jacobian has rank one. Finite reductions then leave three residue classes, and a local calculation at $17$ shows that each contains one rational point. This proves Theorem~\ref{thm:louboutin19}; its interpretation using the trace from $K_t$ to $\QQ$ then proves Theorem~\ref{app:unit13} for every $t\geq-1$.

We give the arithmetic calculations needed for the descent and the local argument: polynomial identities, prime-ideal valuations, finite-field calculations, and congruences modulo $4$ and $17^2$. We prove the class-number assertion and give a basis of the group of unit squareclasses below. The index information needed for the reduction argument is established in Lemma~\ref{app:reduction-image}. For background, see Cassels and Flynn's genus-two framework \cite[Chapters~2, 11 and~13]{CasselsFlynn1996} and Flynn's discussion of limited index information in Chabauty arguments \cite[Comment~3.2]{Flynn1997}. We begin with the models and their changes of coordinates, including the exceptional points.

\subsection{The plane curve and its normalization}
Recall
\[
    \Phi(A,B)=A^3-B^4-4AB^2-2A^2+3AB+A+4B-3,
\]
and write
\[
    \begin{split}
        h_H(u) & =-(2u+1)(4u^5-8u^4+4u^2+2u-1)  \\
               & =-8u^6+12u^5+8u^4-8u^3-8u^2+1.
    \end{split}
\]
Let $H$ denote the smooth projective curve with affine equation $v^2=h_H(u)$. To check smoothness directly, set
\[
    \begin{split}
        A_0(u) & =960u^4-1680u^3+200u^2+224u+169, \\
        B_0(u) & =-160u^5+320u^4-134u^2-93u+14.
    \end{split}
\]
Polynomial multiplication gives
\begin{equation}\label{app:bezout}
    A_0h_H+B_0h_H'=169.
\end{equation}
Thus $h_H$ has six distinct roots, and $H$ has genus two. This identity also proves good reduction at $3$, $5$, and $17$, since these primes are odd and do not divide the leading coefficient of $h_H$.

Set $a(u)=4u^3+4u^2-1$ and $c(u)=6u^2+5u+2$. Expansion gives
\[
    \begin{split}
        \Phi(A,1+uA) & =-A^2\bigl(u^4A^2+a(u)A+c(u)\bigr), \\
        \bigl(2u^4A+a(u)\bigr)^2-h_H(u)
                     & =4u^4\bigl(u^4A^2+a(u)A+c(u)\bigr).
    \end{split}
\]
Consequently the substitution
\[
    u=\frac{B-1}{A},\qquad v=2u^4A+a(u)
\]
identifies dense open subsets of the two curves. The inverse is
\[
    A=\frac{v-a(u)}{2u^4}=-\frac{2c(u)}{v+a(u)},\qquad B=1+uA,
\]
using either expression where its denominator is nonzero.

For completeness, the affine singularities and the omitted points can be checked without elimination. On $A\ne0$ the change to $(A,u)$ is invertible. For $u\ne0$ it identifies the curve locally with the smooth equation for $H$; for $u=0$ its equation is $-A+2=0$, with nonzero derivative in $A$. On $A=0$ one has
\[
    \Phi(0,B)=-(B-1)^2(B^2+2B+3).
\]
At either root of $B^2+2B+3$, the partial derivative $\Phi_A=11B+13$ is nonzero. The remaining point $(0,1)$ is a node: with $s=B-1$, its tangent cone is $-(2A^2+5As+6s^2)$, of discriminant $-23$. In particular the node has no rational branch. Neither the line $A=0$ nor the line $B-1=0$ is a component, so the open identification also proves geometric irreducibility and identifies $H$ as the normalization of the projective plane curve. The two points above the node $(0,1)$ are
\[
    u=\frac{-5\pm\sqrt{-23}}{12},\qquad v=a(u).
\]
Indeed these are the roots of $c(u)$, and the inverse map gives $A=0$ and $B=1$ at each of them.

Write $P_\pm=(0,\pm1)$ and $W=(-1/2,0)$ on $H$. The points $P_-$ and $W$ map to $(2,1)$ and $(4,-1)$, respectively; $P_+$ maps to the plane point at infinity. At $u=\infty$, the coordinate $v/u^3$ has square $-8$, so both points above infinity are nonrational; their plane images are $(0,-1\pm\sqrt{-2})$. These observations account for all exceptional fibers. In particular, proving
\begin{equation}\label{app:H-point-target}
    H(\QQ)=\{P_-,P_+,W\}
\end{equation}
will determine the affine rational point set of $\Phi(A,B)=0$ as
\[
    \{(0,1),(2,1),(4,-1)\}.
\]
The node contributes the first point and must be included separately.

For the local calculations it is convenient to use the odd-degree model
\begin{equation}\label{app:odd-model}
    C:\quad y^2=F(x),\qquad
    F(x)=13x^5-26x^4+104x^3-208x^2+144x-32.
\end{equation}
The change of variables is
\[
    x=(u+1/2)^{-1},\qquad y=2x^3v.
\]
It sends $W$ to the unique point $\infty$, and $P_\pm$ to $(2,\pm16)$. We identify the Jacobians of these smooth projective models, and write
\[
    J=\operatorname{Jac}(C),\qquad Q=[(2,-16)-\infty].
\]

The hyperelliptic fibers give $P_-+P_+\sim2W$, and hence $[P_--P_+]=2[P_--W]=2Q$ under the identification of models. Thus this divisor class is divisible by two. The arguments below require only local information about the index of $\ZZ Q$, so we do not need to prove that $Q$ generates $J(\QQ)$.

\subsection{An explicit descent on the Jacobian}
\label{app:explicit-descent}

We prove the rank assertion by an explicit $2$-descent. Stoll's odd-degree descent \cite[Lemma~4.1 and Proposition~4.2]{Stoll2001} identifies $J(k)/2J(k)$ with a subgroup of the kernel of the norm on squareclasses. Here $k$ is $\QQ$ or one of its completions, and the norm is the determinant norm from $k[X]/(f_{\mathrm m}(X))$ to $k$, using the monic model \eqref{app:monic-descent-model} below. If this algebra is a product of fields, its norm is the product of their field norms. This identification is compatible with completion. The local dimension formula and the restriction on ramification will bound the possible squareclasses.

\begin{lemma}\label{app:descent-field}
    Let
    \[
        g_L(Z)=Z^5-Z^4+2Z^3-4Z^2+Z-1,\qquad
        L=\QQ(z),\quad g_L(z)=0.
    \]
    Then $\mathcal O_L=\ZZ[z]$, the signature of $L$ is $(1,2)$, and its class number is one.  If $w=-z^3+z^2+1$, then the squareclasses of $-1,z,w$ form a basis of $\mathcal O_L^\times/(\mathcal O_L^\times)^2$.
\end{lemma}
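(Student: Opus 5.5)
Four facts need proof: the ring of integers is $\ZZ[z]$, the signature is $(1,2)$, the class number is one, and $-1,z,w$ span the unit squareclasses. We treat them in that order, since each step uses the previous ones.

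\emph{Step 1: $\mathcal O_L=\ZZ[z]$.} Compute $\operatorname{disc}(g_L)$ as the resultant of $g_L$ and $g_L'$, up to sign. If this integer is squarefree, then $\mathcal O_L=\ZZ[z]$ immediately, because $[\mathcal O_L:\ZZ[z]]^2$ divides it. If instead some $q^2$ divides it, apply Dedekind's criterion at $q$. Factor $g_L\equiv\prod\bar h_i^{e_i}\pmod q$ and form $(g_L-\prod h_i^{e_i})/q$. It then suffices to check that this polynomial is coprime modulo $q$ to the repeated factors.

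\emph{Step 2: signature $(1,2)$.} Count real roots of $g_L$ by Sturm's theorem. A shortcut is also available: the discriminant has sign $(-1)^{r_2}$, so a positive discriminant forces $r_2\in\{0,2\}$. A sign check then exhibits exactly one real root; for instance $g_L(0)=-1<0<g_L(2)$, and $g_L$ is increasing wherever $g_L'>0$. Together these give $r_2=2$.

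\emph{Step 3: class number one.} The Minkowski bound is
\[
M_L=\frac{5!}{5^5}\Bigl(\frac4\pi\Bigr)^2\sqrt{|\operatorname{disc}L|}.
\]
Every rational prime $q\le M_L$ is factored via Step 1 and the Kummer--Dedekind factorization of $g_L$ modulo $q$. For each prime ideal of norm at most $M_L$, we exhibit a principal generator. To find one, look for small-coefficient elements $a+bz+cz^2+\cdots$ whose norm (the resultant with $g_L$) is $\pm q^f$. We then identify the generator's prime ideal by reducing modulo the corresponding factor of $g_L$. If the bound is large, the Bach-type or Zimmert bounds reduce the number of primes that must be treated.

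\emph{Step 4: unit squareclasses, the main obstacle.} By Dirichlet the unit rank is $r_1+r_2-1=2$, and the torsion is $\{\pm1\}$ because $L$ has a real embedding. Hence $\mathcal O_L^\times/(\mathcal O_L^\times)^2\cong\FF_2^3$, and it suffices to show that $-1,z,w$ are independent modulo squares. First, $z$ and $w$ are units, since $N(z)=\pm1$ from the constant term of $g_L$ and $N(w)=\pm1$ by a resultant computation. Independence is then proved by finding primes $\mathfrak q$ of odd residue characteristic for which the quadratic-residue characters $\chi_{\mathfrak q}$ on $-1,z,w$ produce a $3\times3$ matrix over $\FF_2$ of full rank; a square unit has trivial character at every such $\mathfrak q$. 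The real embedding supplies one further character, the sign, for free. The remaining characters come from degree-one primes at small $q$, for example roots of $g_L$ modulo $7$, $11$, or $13$, where we evaluate Legendre symbols of $-1$, the root, and $-r^3+r^2+1$. This step avoids needing the full unit group, so it suffices and requires only finite computation. I expect most of the effort to lie in choosing primes that make the matrix invertible, and in certifying the class number in Step 3 if the Minkowski bound is not tiny.
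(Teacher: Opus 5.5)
Your outline follows the paper's route: the discriminant $2^4 13^3$ with Dedekind's criterion at $2$ and $13$, a count of real roots, the Minkowski bound (here below $12$) with explicit generators for the primes of norms $2,5,7,11$, and quadratic characters on $-1,z,w$. You add the sign at the real embedding, which for units is the same character as the field norm used in the paper. However, three steps are missing or would fail as written.

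First, you never prove that $g_L$ is irreducible. Yet $[\mathcal O_L:\ZZ[z]]^2\mid\operatorname{disc}(g_L)$, the relation $r_1+2r_2=5$, and the unit rank all presuppose $[L:\QQ]=5$. The paper checks irreducibility modulo $3$: $g_L$ has no root in $\FF_3$, and its remainders modulo the three monic irreducible quadratics are $-1,-Z,1$.

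Second, your signature shortcut only gives at least one real root. The inequalities $g_L(0)<0<g_L(2)$ and the remark about $g_L'$ do not exclude five real roots. Sturm's theorem does exclude them. So does your parity argument ($r_2\in\{0,2\}$) combined with Newton's identities: the sum of the squares of the roots is $1^2-2\cdot2=-3<0$, so $r_2\neq0$. The paper instead shows $g_L<0$ on $(-\infty,1]$ and applies Descartes' rule to $g_L(T+1)=T^5+4T^4+8T^3+6T^2-2$.

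Third, the primes you name for Step~4 would not finish it. At the degree-one primes above $7$, $11$, and $13$, where $z\equiv3$, $8$, and $6$ or $3$, the residue of $w=-z^3+z^2+1$ is $4$, $4$, and $3$ or $9$. These are all squares. Moreover, $\Norm_{L/\QQ}(w)=1$ makes $w$ positive at the real embedding. Every character on your list is therefore trivial on $w$, so together they span only a rank-two space. They cannot show that the squareclass of $w$ lies outside $\langle -1,z\rangle$. You need a prime at which $w$ is a nonresidue. The prime above $5$ works: there $z\equiv4$ is a square and $w\equiv3$ is a nonsquare. The paper uses this prime, the prime above $7$ (where $z\equiv3$ is a nonsquare and $w\equiv4$ is a square), and the norm, which gives a full-rank $3\times3$ matrix over $\FF_2$.
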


\begin{proof}
    For a nonzero integral ideal $\mathfrak a$ of $\mathcal O_L$, write $\Norm_{L/\QQ}(\mathfrak a)=\#(\mathcal O_L/\mathfrak a)$ for its absolute ideal norm. For an element of $L$, $\Norm_{L/\QQ}$ denotes the field norm.

    Modulo $3$, the polynomial $g_L$ has no root.  Its remainders modulo the three monic irreducible quadratics $Z^2+1$, $Z^2+Z+2$, and $Z^2+2Z+2$ are respectively $-1,-Z,1$. A reducible polynomial of degree five has a factor of degree at most two, so $g_L$ is irreducible.  The discriminant calculation gives
    \[
        \operatorname{disc}(g_L)=35152=2^4 13^3.
    \]
    At the only possible index primes, we have
    \[
        \begin{aligned}
            g_L(Z) & \equiv(Z-1)^5\pmod2,         & g_L(1) & =-2,         \\
            g_L(Z) & \equiv(Z-6)^4(Z-3)\pmod{13}, & g_L(6) & =13\cdot521.
        \end{aligned}
    \]
    Since $521\equiv1\pmod{13}$, Dedekind's index criterion excludes both $2$ and $13$ from $[\mathcal O_L:\ZZ[z]]$.  Indeed, after subtracting the indicated lifted factorization and dividing by the prime, the resulting polynomial is nonzero modulo that prime at the root of the repeated factor.  Thus $\mathcal O_L=\ZZ[z]$, and Dedekind's prime decomposition theorem gives
    \begin{equation}\label{app:descent-primes}
        (2)=\mathfrak p_{L,2}^5,\quad \Norm_{L/\QQ}(\mathfrak p_{L,2})=2;
        \qquad
        (13)=\mathfrak p_{L,13}^4\mathfrak q_{L,13},\quad
        \Norm_{L/\QQ}(\mathfrak p_{L,13})=\Norm_{L/\QQ}(\mathfrak q_{L,13})=13,
    \end{equation}
    where $z\equiv6\pmod{\mathfrak p_{L,13}}$ and $z\equiv3\pmod{\mathfrak q_{L,13}}$.

    All the terms of $g_L(Z)$ are negative when $Z<0$.  For $0\leq Z\leq1$,
    \[
        g_L(Z)=Z^4(Z-1)+2Z^2(Z-1)-2Z^2+Z-1<0.
    \]
    Also
    \[
        g_L(T+1)=T^5+4T^4+8T^3+6T^2-2
    \]
    has precisely one positive root, by Descartes' rule and the intermediate value theorem.  Therefore $L$ has signature $(1,2)$.

    Minkowski's bound is
    \[
        \frac{5!}{5^5}\left(\frac4\pi\right)^2\sqrt{35152}<12.
    \]
    The rational prime $3$ is inert in $L$. The root sets of $g_L$ modulo $5,7,11$ are respectively $\{4\},\{3\},\{8\}$.  Thus, using \eqref{app:descent-primes}, the only prime ideals of norm at most $11$ are the unique prime ideals of norms $2,5,7,11$.  The following norm identities show that each is principal:
    \[
        \begin{array}{c|r}
            a                & \Norm_{L/\QQ}(a) \\ \hline
            z-1              & 2                \\
            z^4-z^3+z^2-2z   & 5                \\
            -z^4+z^3-2z^2+3z & -7               \\
            z^2+2            & 11
        \end{array}
    \]
    Every ideal class has an integral representative of norm less than $12$, and every prime factor of such a representative is principal. Hence the class number is one.

    The identities $\Norm_{L/\QQ}(z)=\Norm_{L/\QQ}(w)=1$ show that $z,w$ are units.  Their squareclasses, together with that of $-1$, are independent. The field norm from $L$ to $\QQ$ detects the exponent of $-1$. At the prime ideal of norm $5$, the residues of $z,w$ are $4,3$, respectively; the former is a square and the latter is not. At the prime ideal of norm $7$, their residues are $3,4$, respectively; the former is not a square and the latter is. Dirichlet's unit theorem and the signature give unit rank two.  Moreover the roots of unity in $L$ are $\{\pm1\}$, since $L$ has a real embedding.  The quotient of its unit group by squares consequently has dimension three, proving the basis assertion.
\end{proof}

\begin{theorem}\label{app:rank-one}
    The Jacobian of
    \[
        H:\quad v^2=-(2u+1)(4u^5-8u^4+4u^2+2u-1)
    \]
    has rank one over $\QQ$.
\end{theorem}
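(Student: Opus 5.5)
We prove the rank statement by bounding $J(\QQ)/2J(\QQ)$ from above with an explicit $2$-descent on the odd-degree model $C$ of \eqref{app:odd-model}, and from below with the point $Q$. For the lower bound we show that $Q=[(2,-16)-\infty]$ has infinite order. Reducing modulo two good primes, say $3$ and $5$, and counting $\#J(\FF_3)$ and $\#J(\FF_5)$ from point counts on $C$ over $\FF_q$ and $\FF_{q^2}$, we expect the orders of the images of $Q$ to give incompatible constraints on a torsion point. Reduction is injective on torsion prime to $q$, and the two primes together leave no room for $Q$ to be torsion. It then suffices to show $\dim_{\FF_2}J(\QQ)/2J(\QQ)\le1$ and $J(\QQ)[2]=0$.

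\emph{Descent algebra.} Scaling $F$ to a monic quintic $f_{\mathrm m}$ should produce the polynomial $g_L$ of Lemma~\ref{app:descent-field}, up to a linear change of variable: $13^4F(X/13)$ becomes monic, and a further rescaling reduces it to $g_L$. Since $g_L$ is irreducible, the algebra $\QQ[X]/(f_{\mathrm m})$ is the field $L$, and $J(\QQ)[2]=0$. By Stoll's Lemma~4.1 and Proposition~4.2, the Cassels map embeds $J(\QQ)/2J(\QQ)$ into the subgroup of $L^\times/L^{\times2}$ consisting of classes with square norm that are unramified outside the bad primes $2$, $13$ and $\infty$. Because $L$ has class number one, this Selmer-type group is generated by $-1$, $z$, $w$ and generators of $\mathfrak p_{L,2}$, $\mathfrak p_{L,13}$, $\mathfrak q_{L,13}$, namely $z-1$, $z-6+13(\cdot)$ and $z-3+\dots$ by \eqref{app:descent-primes}. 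This gives a six-dimensional space, and the norm condition cuts it to dimension at most five.

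\emph{Local conditions.} The main obstacle is cutting this space down to a line. For each $v\in\{2,13,\infty\}$, the local image has dimension $\dim J(\QQ_v)[2]+\dim_{\FF_2}(\text{correction at }2)$; it has dimension $2$ minus the number of real roots adjustments at $\infty$, and at $v=2$ it is $\dim J(\QQ_2)[2]+2$. I would compute each local image explicitly as the span of Cassels images of the $\QQ_v$-points $x-T$ evaluated at a few small points and $2$-torsion divisors, and then intersect the global generators with these images using residue symbols. At $13$, the factorization $(Z-6)^4(Z-3)$ makes $L\otimes\QQ_{13}$ a product of a totally ramified quartic and $\QQ_{13}$. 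At $\infty$ only a sign condition remains. At $2$, $\mathfrak p_{L,2}$ is totally ramified, so squareclasses are detected modulo a suitable power of $\mathfrak p_{L,2}$; this is where the congruences modulo $4$ enter. The aim is to show that the global classes landing in every local image form a one-dimensional space, spanned by the image of $Q$, namely $x-T$ at $x=2$. With the lower bound from the first paragraph, this gives rank exactly one.
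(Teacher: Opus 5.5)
\textbf{Verdict: genuine gap.} Your overall strategy matches the paper's: Stoll's odd-degree $2$-descent over $L$, class number one, the units $-1,z,w$, and local conditions at $13$ and $2$. As written, however, the proposal does not prove the upper bound, and one step is false.

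\textbf{The affine-change step is false.} The monic model is not an affine transform of $g_L$. We have $\operatorname{disc}(f_{\mathrm m})=2^{32}13^{15}$ and $\operatorname{disc}(g_L)=2^4 13^3$. A substitution $X=cZ+a$ would require $c^{20}=2^{28}13^{12}$, which has no rational solution. The two polynomials define the same field only through a nonlinear relation, and you must exhibit it. The paper uses the root $\vartheta_L=-4z^4+6z^3+2z^2+2z+8$ of $f_{\mathrm m}$. Without such a relation you cannot write Cassels images in $\ZZ[z]$, and every valuation and residue computation downstream depends on that. Relatedly, on the non-monic model $C$ the image of $Q$ is $13(2-T)=26-\vartheta_L$, not $2-T$; the norm of $2-T$ is $F(2)/13=256/13$, which is not a square.

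\textbf{Cutting the space to a line is only announced.} This step is the entire content of the upper bound. The norm condition already does more than you say. It removes $-1$ and the generator of $\mathfrak p_{L,2}$, and forces equal parity at $\mathfrak p_{L,13}$ and $\mathfrak q_{L,13}$. With $e=26-\vartheta_L$, whose valuations are $8,3,1$, the candidates are therefore exactly $\langle z,w,e\rangle$. You still have to show that $z$ fails at $13$ and $w$ fails at $2$.
\begin{itemize}
\item At $13$ the local image has dimension $1$, since $f_{\mathrm m}$ has two factors over $\QQ_{13}$, and $e$ spans it. The unit $w$ is a local square (residues $3$ and $9$). The unit $z$ has the nonsquare residue $6$ at $\mathfrak p_{L,13}$ and even valuation, so its class is neither trivial nor that of $e$.
\item At $2$, $L\otimes\QQ_2$ is a field and the local image has dimension $2$. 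It is spanned by $e_0=e/(z-1)^8$ and $3-\vartheta_L$, the latter because $f_{\mathrm m}(3)\equiv1\pmod 8$. You must check these two are independent modulo squares; the paper does this modulo $4$. Both are $\equiv1\pmod 2$. Put $\varpi=z-1$. Then $w\equiv1+\varpi+\varpi^3\pmod 2$ cannot be a square times $e_0^i(3-\vartheta_L)^j$, because squares modulo $2$ involve only even powers of $\varpi$.
\end{itemize}
Until these checks are done, $\dim\operatorname{Sel}^{(2)}\le1$ is only a hope. A nontrivial $2$-part of the Tate--Shafarevich group would make the method fail.

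\textbf{Smaller points.} The place at infinity imposes only positivity, since $\dim J(\mathbb R)[2]-g=0$. Your lower bound via $\FF_3$ and $\FF_5$ is sound in outline, and the computation appears in Lemma~\ref{app:reduction-image}. However, once $J(\QQ)[2]=0$, the nontriviality of $\delta(Q)$ already gives rank at least one; this follows from the odd valuation of $e$ at $\mathfrak q_{L,13}$.
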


\begin{proof}
    On the model \eqref{app:odd-model}, put $X=13x$ and $Y=169y$. These substitutions yield the monic model
    \begin{equation}\label{app:monic-descent-model}
        \begin{split}
            C_{\mathrm m}:\quad Y^2=f_{\mathrm m}(X),\qquad
            f_{\mathrm m}(X)={} & X^5-26X^4+1352X^3-35152X^2 \\
                                & {}+316368X-913952.
        \end{split}
    \end{equation}
    These changes of variables give an isomorphism of the smooth projective curves. Composing with the change from $H$ to $C$ sends $W=(-1/2,0)$ to the unique point $\infty$ of this odd-degree model, and $P_+=(0,1)$ to $P=(26,2704)$. Under the identification with $J$, put $E=[P-\infty]$; thus $E=-Q$.

    With $L,z,w$ as in Lemma~\ref{app:descent-field}, set
    \[
        \vartheta_L=-4z^4+6z^3+2z^2+2z+8.
    \]
    Reduction modulo $g_L$ gives $f_{\mathrm m}(\vartheta_L)=0$.  The element $\vartheta_L$ is not rational, and $[L:\QQ]=5$, so $L=\QQ(\vartheta_L)$ and $f_{\mathrm m}$ is its minimal polynomial.  Direct polynomial calculations give
    \[
        \operatorname{disc}(f_{\mathrm m})=2^{32}13^{15}.
    \]
    The odd-degree descent therefore maps
    \[
        \delta:J(\QQ)/2J(\QQ)\hookrightarrow
        \ker\bigl(\Norm_{L/\QQ}:L^\times/(L^\times)^2
        \longrightarrow\QQ^\times/(\QQ^\times)^2\bigr),
    \]
    and its $2$-Selmer group consists of classes satisfying the local image conditions. Stoll's ramification restriction \cite[Corollary~4.7]{Stoll2001} implies that such classes have even valuations away from $2$ and $13$. Stoll's point formula \cite[Lemma~4.3(1)]{Stoll2001} shows that the image of $E$ is represented by
    \[
        e=26-\vartheta_L=4z^4-6z^3-2z^2-2z+18,
        \qquad \Norm_{L/\QQ}(e)=2704^2=2^8 13^4.
    \]
    We record its valuations:
    \begin{equation}\label{app:descent-e-valuations}
        v_{\mathfrak p_{L,2}}(e)=8,\qquad
        v_{\mathfrak p_{L,13}}(e)=3,\qquad
        v_{\mathfrak q_{L,13}}(e)=1.
    \end{equation}
    For an explicit verification, put $\varpi=z-1$.  Reduction modulo $g_L$ verifies
    \begin{equation}\label{app:descent-e-unit}
        e=\varpi^8e_0,\qquad
        e_0=654z^4+280z^3+1714z^2-158z+447.
    \end{equation}
    As $e_0\equiv1\pmod{\mathfrak p_{L,2}}$ and $\Norm_{L/\QQ}(\varpi)=2$, this proves the first valuation. At $\mathfrak q_{L,13}$, the simple root $3$ of $g_L$ modulo $13$ lifts to $55$ modulo $169$: $g_L(3)=182$, $g_L'(3)=328$, and $g_L(55)\equiv0\pmod{169}$.  Moreover $e(55)\equiv117\pmod{169}$, so $v_{\mathfrak q_{L,13}}(e)=1$.  The norm identity and the residue degrees in \eqref{app:descent-primes} now give $v_{\mathfrak p_{L,13}}(e)=3$.

    Let $\alpha\in L^\times$ have field norm equal to a square in $\QQ^\times$ and even valuations at all prime ideals not lying above $2$ or $13$. Then the valuation at $\mathfrak p_{L,2}$ is even, and the valuations at $\mathfrak p_{L,13},\mathfrak q_{L,13}$ have the same parity. If this common parity is odd, put $\alpha'=\alpha/e$; otherwise, put $\alpha'=\alpha$. The element $\alpha'$ still has square field norm and has even valuations at every prime ideal. Thus $(\alpha')=\mathfrak a^2$ for a fractional ideal $\mathfrak a$. Since the class number is one, $\mathfrak a=(\beta)$ for some $\beta\in L^\times$, so $\alpha'/\beta^2$ is a unit with square field norm. Its squareclass lies in the subgroup generated by $z,w$, as shown in Lemma~\ref{app:descent-field}. Since $\alpha$ is either $\alpha'$ or $e\alpha'$, its squareclass lies in the subgroup generated by $z,w,e$. Consequently
    \begin{equation}\label{app:descent-global-bound}
        \operatorname{Sel}^{(2)}(J/\QQ)\subseteq\langle z,w,e\rangle.
    \end{equation}

    First consider the local condition at $13$. The algebra $L\otimes\QQ_{13}$ is the product of a totally ramified quartic field and $\QQ_{13}$.  There are therefore two irreducible factors of $f_{\mathrm m}$ over $\QQ_{13}$. The local dimension formula \cite[Lemma~4.4(1)]{Stoll2001} gives
    \[
        \dim_{\FF_2}J(\QQ_{13})/2J(\QQ_{13})=2-1=1.
    \]
    The image of $e$ is nontrivial by its odd valuations in \eqref{app:descent-e-valuations}, so it generates this local image.  The unit $w$ is a square in both local factors: its residues at $z=6$ and $z=3$ are $3$ and $9$, both squares in $\FF_{13}$, and Hensel's lemma applies.  The class of $z$ is nontrivial, since its residue at $\mathfrak p_{L,13}$ is the nonsquare $6$.  It cannot equal the class of $e$, because its valuations are even.  Thus the local condition at $13$ removes the factor $z$ from \eqref{app:descent-global-bound}, leaving
    \begin{equation}\label{app:descent-after-thirteen}
        \operatorname{Sel}^{(2)}(J/\QQ)\subseteq\langle w,e\rangle.
    \end{equation}

    It remains to impose the local condition at $2$.  Let $L_2=L\otimes\QQ_2$.  This is a field of degree five, and
    \[
        \varpi^5+4\varpi^4+8\varpi^3+6\varpi^2-2=0,
        \qquad \mathcal O_{L_2}=\ZZ_2[\varpi].
    \]
    The polynomial is Eisenstein, so $\varpi$ is a uniformizer and
    \[
        \mathcal O_{L_2}/2\mathcal O_{L_2}
        \simeq\FF_2[\varpi]/(\varpi^5).
    \]
    There is only one irreducible factor of $f_{\mathrm m}$ over $\QQ_2$. The same local dimension formula \cite[Lemma~4.4(1)]{Stoll2001}, with genus two, now gives
    \[
        \dim_{\FF_2}J(\QQ_2)/2J(\QQ_2)=1-1+2=2.
    \]
    In addition to the class of $e$, the local image contains the class represented by $b=3-\vartheta_L$. Indeed $f_{\mathrm m}(3)=-246575\equiv1\pmod8$ is an odd square in $\QQ_2$, so there is a $\QQ_2$-point of $C_{\mathrm m}$ with abscissa $3$.

    The local squareclass of $e$ is represented by the unit $e_0\in\mathcal O_{L_2}^{\times}$ in \eqref{app:descent-e-unit}. Expanding in $\varpi$ gives
    \[
        \begin{aligned}
            e_0={} & 654\varpi^4+2896\varpi^3+6478\varpi^2
            +6726\varpi+2937,                                   \\
            b={}   & 4\varpi^4+10\varpi^3+4\varpi^2-8\varpi-11, \\
            w={}   & 1-\varpi-2\varpi^2-\varpi^3.
        \end{aligned}
    \]
    Both $e_0,b$ are congruent to $1$ modulo $2$. A square root in $L_2$ of a unit of $\mathcal O_{L_2}$ must itself be a unit of $\mathcal O_{L_2}$. We can therefore rule out squares by reduction modulo $2$ or $4$. The square roots of $1$ in $\FF_2[\varpi]/(\varpi^5)$ are precisely $1+i\varpi^3+j\varpi^4$, for $i,j\in\{0,1\}$. Changing a lift by an element of $2\mathcal O_{L_2}$ does not change its square modulo $4$.  Reduction with the equation for $\varpi$ gives the following four possible squares modulo $4$:
    \[
        \begin{gathered}
            1,\qquad1+2\varpi,\qquad1+2\varpi^3+2\varpi^4,\qquad
            1+2\varpi+2\varpi^3+2\varpi^4.
        \end{gathered}
    \]
    But
    \[
        \begin{aligned}
            e_0  & \equiv1+2\varpi+2\varpi^2+2\varpi^4,          \\
            b    & \equiv1+2\varpi^3,                            \\
            e_0b & \equiv1+2\varpi+2\varpi^2+2\varpi^3+2\varpi^4
            \pmod{4\mathcal O_{L_2}}.
        \end{aligned}
    \]
    None belongs to the list of possible squares.  Thus the squareclasses of $e_0,b$ are independent and generate the local image, whose dimension is two.

    Every product of $e_0,b$ is congruent to $1$ modulo $2$, whereas
    \[
        w\equiv1+\varpi+\varpi^3\pmod{2\mathcal O_{L_2}}
    \]
    is not a square in $\FF_2[\varpi]/(\varpi^5)$: any square has zero coefficients in degrees one and three. Multiplying $w$ by a product of $e_0,b$ preserves this reduction. It follows that the squareclass of $w$ does not belong to the local image. The local condition at $2$ consequently reduces \eqref{app:descent-after-thirteen} to
    \[
        \operatorname{Sel}^{(2)}(J/\QQ)=\langle e\rangle.
    \]
    Equality holds because $e$ is the image of the rational class $E$ and is nontrivial at $13$.  Thus the $2$-Selmer group has dimension one. By Stoll's description of rational two-torsion \cite[Lemma~4.3(3)]{Stoll2001}, irreducibility of $f_{\mathrm m}$ gives $J(\QQ)[2]=0$. By the Mordell--Weil theorem,
    \[
        \operatorname{rank}J(\QQ)
        =\dim_{\FF_2}J(\QQ)/2J(\QQ).
    \]
    The descent injection bounds this dimension above by one, and the nontrivial image of $E$ bounds it below by one.  The rank is therefore one.
\end{proof}

\subsection{Reduction to three residue classes}
The following argument uses the finite-reduction principle of the Mordell--Weil sieve \cite[Section~2]{BruinStoll2010}, with all required data displayed.

\begin{lemma}\label{app:reduction-image}
    Let
    \[
        C:\quad y^2=F(x)=13x^5-26x^4+104x^3-208x^2+144x-32,
        \qquad J=\operatorname{Jac}(C),
    \]
    and put $Q=[(2,-16)-\infty]$.  Then $J(\QQ)$ is torsion-free and $Q$ has infinite order.  If $\operatorname{rank}J(\QQ)\leq1$, the reduction modulo $17$ of every point of $C(\QQ)$ is one of
    \[
        \infty,\qquad (2,1),\qquad (2,16).
    \]
\end{lemma}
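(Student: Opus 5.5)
Three things need proof: $J(\QQ)$ is torsion-free, $Q$ has infinite order, and, assuming rank at most one, the images of $C(\QQ)$ modulo $17$ are restricted. We follow the finite-reduction principle of the Mordell--Weil sieve.

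For torsion-freeness, $C$ has good reduction at $3$ and $5$ by \eqref{app:bezout}, so $J(\QQ)_{\mathrm{tors}}$ injects into $J(\FF_3)$ and into $J(\FF_5)$. Each order is computed from $\#C(\FF_p)$ and $\#C(\FF_{p^2})$ via the zeta function:
\[
    \#J(\FF_p)=\tfrac12\bigl(N_1^2+N_2\bigr)-p,
    \qquad N_i=\#C(\FF_{p^i}).
\]
A count of points on the odd-degree model should give coprime orders, except possibly for a common factor that the $2$-torsion argument in Theorem~\ref{app:rank-one} removes. Recall that $J(\QQ)[2]=0$ because $f_{\mathrm m}$ is irreducible. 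If a common odd factor survives, we add $p=17$ or another good prime to the comparison.

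For $Q$ having infinite order, it suffices that $Q$ is nonzero, since torsion is trivial. This follows because its descent image $\delta(Q)=e$ is nontrivial, as shown in the proof of Theorem~\ref{app:rank-one}.

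For the residue classes, we work with $G=J(\FF_{17})$ and compute $\#G$ from point counts over $\FF_{17}$ and $\FF_{289}$. We also determine the order $m$ of $\overline Q$ in $G$. Assume rank at most one. Then $J(\QQ)\cong\ZZ$, say generated by $G_0$ with $Q=kG_0$. We must control the unknown index $k$; this is the index-information issue noted in the paper. We exploit the fact that $J(\QQ)/2J(\QQ)$ has dimension one with $Q$ nontrivial in it, so $k$ is odd. Further primes can exclude divisibility of $k$ by primes $\ell$ dividing $\#G/m$: if $Q=\ell G_1$, then $\overline Q$ must be an $\ell$-multiple in $J(\FF_p)$ for several $p$, and we aim to find such a $p$ where it is not. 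This gives $\overline{J(\QQ)}=\langle\overline Q\rangle$ inside $G$, or at least a known small subgroup.

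Then every $P\in C(\QQ)$ has Abel--Jacobi image $[P-\infty]\in J(\QQ)$. Its reduction therefore lies in that subgroup and equals $[\overline P-\infty]$. We list all points $\overline P\in C(\FF_{17})$ and keep those with $[\overline P-\infty]\in\langle\overline Q\rangle$, computed by Cantor's algorithm on Mumford representations. We expect exactly $\infty$, $(2,1)=\overline{(2,-16)}$, and $(2,16)$ to survive; the last corresponds to $-Q$ and the hyperelliptic involution. Note that $-16\equiv1\pmod{17}$.

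The main obstacle is rigorous control of the index of $\ZZ Q$ in $J(\QQ)$ without a height computation. If the local sieve at $17$ alone does not suffice, we would first try combining reductions at $3$, $5$, and $17$ (a genuine Mordell--Weil sieve over the index possibilities). Failing that, we would show that any odd index prime $\ell$ with $\overline Q\notin\ell J(\FF_p)$ for some good $p$ is excluded.
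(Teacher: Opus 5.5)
Your proposal is correct and has the same skeleton as the paper's proof. Torsion is trivial by good reduction at $3$ and $5$; the orders there are $\#J(\FF_3)=27$ and $\#J(\FF_5)=26$, which are coprime, so no further prime is needed. The image of $J(\QQ)$ in $J(\FF_{17})$ is then identified with $\langle\overline Q\rangle$. Finally, only $0$ and $\pm\overline Q$ in that group have a Mumford representative of degree at most one.

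Three sub-steps differ from the paper.

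\begin{itemize}
\item For $Q\neq0$ you use $\delta(Q)=e\neq1$. The paper uses only injectivity of the Abel--Jacobi map, which is simpler.
\item To exclude $2\mid k$, you use that $\delta$ kills $2J(\QQ)$ while $e$ has odd valuation at $\mathfrak q_{L,13}$. The paper instead uses that $\overline Q$ generates the cyclic group $J(\FF_5)$ of order $26$. Your version reuses Theorem~\ref{app:rank-one}; the paper's keeps the lemma purely finite-field.
\item For $\#J(\FF_{17})$ you would count points over $\FF_{289}$. The paper avoids this: it exhibits $\langle Q,R,T\rangle$ of order $324$, then uses $\#C(\FF_{17})=18$ (trace zero) and the Weil bound to get $\#J(\FF_{17})\le392$.
\end{itemize}

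Your search for a prime $p$ with $\overline Q\notin 3J(\FF_p)$ ends at $p=3$, where $\overline Q$ generates $J(\FF_3)\cong\ZZ/27\ZZ$, exactly as in the paper.

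Your criterion ``$\ell\mid\#G/m$'' is conservative. The paper's data are $\langle Q,R\rangle\cong\ZZ/18\ZZ\oplus\ZZ/9\ZZ$, the extra $2$-torsion class $T$, and $\#J(\FF_{17})=324$. Together they give $J(\FF_{17})\cong(\ZZ/18\ZZ)^2$, whose exponent $18$ equals the order of $\overline Q$. So any $\overline{G_0}$ with $k\overline{G_0}=\overline Q$ already has order $18$, and $\gcd(k,18)=1$. Once torsion is excluded, no index information from $3$, $5$, or the descent is actually needed.

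As written, your plan still has to supply the concrete data the paper computes explicitly: the group orders at $3$, $5$, and $17$, the order $18$ of $\overline Q$ modulo $17$, and the Mumford-degree check.
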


\begin{proof}
    The polynomial $F$ has discriminant $2^{32}13^3$, so the primes $3$, $5$, and $17$ are primes of good reduction.  The following calculations establish the group orders and the reduction image.

    A reduced Mumford pair $(a,b)$ represents the class of $D-(\deg a)\infty$, where $D$ is the effective divisor defined by $a(x)=0$ and $y=b(x)$, with multiplicities determined by $a$. Thus $a$ is monic, $\deg a\leq2$, $\deg b<\deg a$, and $a\mid F-b^2$; the identity is $(1,0)$. Negation replaces $b$ by $-b$. We use the following addition rule to verify the tables below. For $(a,b)+(c,b_2)$, choose Bezout identities
    \[
        d_0=r_1a+r_2c=\gcd(a,c),\qquad
        d=s_1d_0+s_2(b+b_2)=\gcd(d_0,b+b_2).
    \]
    Form
    \[
        a_*=\frac{ac}{d^2},\qquad
        b_*=\frac{s_1r_1ab_2+s_1r_2cb+s_2(bb_2+F)}{d}\pmod {a_*}.
    \]
    If $\deg a_*>2$, replace this pair by
    \[
        \left(\operatorname{monic}\frac{F-b_*^2}{a_*},\,
        -b_*\bmod\frac{F-b_*^2}{a_*}\right)
    \]
    and repeat until its first component has degree at most two. These are Cantor's composition formulas \cite[Section~3, (C1)--(C3), p.~97]{Cantor1987} and the classical reduction step \cite[Section~4, p.~99]{Cantor1987}. See also Menezes--Wu--Zuccherato \cite[Algorithms~1--2 and Theorems~49 and~51, pp.~22--27]{MenezesWuZuccherato1996}. We explicitly normalize the first polynomial to be monic at each reduction.

    We first determine the group orders at $3$ and $5$. For each finite field used below, write $n_C(q)=\#C(\FF_q)$. Direct substitution gives $n_C(3)=7$ and $n_C(5)=6$: the lists of $F(x)$ for $x=0,\ldots,p-1$ are $(1,1,1)$ and $(3,0,1,4,3)$, respectively.  To check the corresponding counts over $\FF_{p^2}$, use $\FF_p[\jmath_p]$ with $\jmath_p^2=2$ for both primes. The following matrices give the number of solutions $y$ of $y^2=F(a+b\jmath_p)$, with rows indexed by $b$ and columns by $a$, both in increasing order:
    \[
        \begin{array}{c|c}
            p=3 & p=5 \\ \hline
            \begin{pmatrix}2&2&2\\2&0&0\\2&0&0\end{pmatrix}
                &
            \begin{pmatrix}
                2 & 1 & 2 & 2 & 2 \\
                0 & 2 & 0 & 2 & 0 \\
                0 & 0 & 0 & 2 & 2 \\
                0 & 0 & 0 & 2 & 2 \\
                0 & 2 & 0 & 2 & 0
            \end{pmatrix}
        \end{array}
    \]
    For example, a nonzero value $c_0+c_1\jmath_p$ is a square precisely when $c_0^2-2c_1^2$ is a square in $\FF_p$; this checks each matrix entry using only arithmetic modulo $p$.  Adding the point at infinity gives $n_C(9)=11$ and $n_C(25)=26$.  The genus-two point-count identity \cite[Chapter~8, equation~(8.2.7), p.~80]{CasselsFlynn1996}
    \[
        \#J(\FF_p)=\frac{n_C(p)^2+n_C(p^2)}2-p
    \]
    therefore gives $\#J(\FF_3)=27$ and $\#J(\FF_5)=26$.

    The following table lists the successive multiples of $Q$ at these two primes.  Each row follows from the preceding row by the addition rule.
    \[
        \begin{array}{c|cc|cc}
               & \multicolumn{2}{c|}{\FF_3} & \multicolumn{2}{c}{\FF_5}                   \\
            n  & a_n                        & b_n                       & a_n      & b_n  \\ \hline
            1  & x+1                        & 2                         & x+3      & 4    \\
            2  & x^2+2x+1                   & 2                         & x^2+x+4  & x+2  \\
            3  & x^2+1                      & x+1                       & x^2+x+3  & 4x+1 \\
            4  & x^2+2x                     & x+1                       & x^2+4x+1 & x+4  \\
            5  & x^2                        & 2                         & x^2+4x+2 & 4x+2 \\
            6  & x^2+2                      & 1                         & x^2+4x+4 & x    \\
            7  & x+2                        & 1                         & x^2+3x+4 & 2x   \\
            8  & x^2+2                      & x                         & x^2+2x+3 & 3x   \\
            9  & x^2+2x                     & 2                         & x^2+1    & x+4  \\
            10 & x^2+x                      & 1                         & x+2      & 2    \\
            11 & x                          & 1                         & x^2+1    & 3x+3 \\
            12 & x^2+x                      & 2x+1                      & x^2+2x+2 & x+4  \\
            13 & x^2+x+1                    & x+1                       & x+4      & 0
        \end{array}
    \]
    Since $9Q\ne0$ modulo $3$, the order of $Q$ there is $27$. Since $13Q\ne0$ and $2Q\ne0$ modulo $5$, its order there is $26$.

    Injectivity of prime-to-$p$ torsion under good reduction at $3$ and $5$ now proves $J(\QQ)_{\mathrm{tors}}=0$: the $3$-primary part is excluded at $5$, the $5$-primary part at $3$, and every other primary part by the coprime orders $27$ and $26$.  The Abel--Jacobi embedding shows that $Q\ne0$, so $Q$ has infinite order.

    Assume that the rank is at most one, and write $Q=mG$, where $G$ generates $J(\QQ)$.  If $3\mid m$, the generator $\overline Q$ of $J(\FF_3)$ would be divisible by $3$ in that cyclic group of order $27$, which is impossible.  The same argument at $5$ excludes $2\mid m$.  Consequently
    \[
        \gcd(m,6)=1.
    \]
    No assertion that $m=1$ is needed.

    We next work modulo $17$, where $Q=[(2,1)-\infty]$.  Its first nine multiples are
    \[
        \begin{array}{c|cc}
            n & a_n        & b_n    \\ \hline
            1 & x+15       & 1      \\
            2 & x^2+13x+4  & 10x+15 \\
            3 & x^2+3x+16  & 2x+8   \\
            4 & x^2+16x+6  & x+8    \\
            5 & x^2+14x+8  & 14x+11 \\
            6 & x^2+5x+8   & 9x+6   \\
            7 & x^2+11x+16 & 11x+15 \\
            8 & x^2+10x    & 2x+6   \\
            9 & x^2+16x+1  & 0
        \end{array}
    \]
    The last row is nonzero $2$-torsion, and $6Q\ne0$, so $Q$ has order $18$.  The other eight nonzero multiples are $-8Q,\ldots,-Q$. The uniqueness of reduced Mumford representations therefore gives
    \[
        \langle Q\rangle\cap\iota\bigl(C(\FF_{17})\bigr)=\{0,Q,-Q\},
        \qquad \iota(P)=[P-\infty],
    \]
    because only $Q$ and $-Q$ have first component of degree one.

    We also verify $\#J(\FF_{17})$ without a count over $\FF_{289}$. Let $R=[(0,6)-\infty]$ and $T=[(16,0)-\infty]$.  The addition rule gives
    \[
        \begin{array}{c|cc}
            n & a_n\text{ for }nR & b_n\text{ for }nR \\ \hline
            1 & x                 & 6                 \\
            2 & x^2               & 12x+6             \\
            3 & x^2+11x+2         & 11x+9             \\
            4 & x^2+12x+12        & 13x+10            \\
            5 & x^2+12x+12        & 4x+7
        \end{array}
    \]
    Thus $5R=-4R$ and $3R\ne0$, so $R$ has order $9$.  Its order-three subgroup is different from that of $\langle Q\rangle$, since the first components of $3R$ and $\pm6Q$ differ.  Hence $\langle Q,R\rangle\simeq\ZZ/18\ZZ\oplus\ZZ/9\ZZ$. The class $T=(x+1,0)$ is nonzero $2$-torsion and differs from $9Q$; it lies outside $\langle Q,R\rangle$, whose only nonzero element of order two is $9Q$.  It follows that $324$ divides $\#J(\FF_{17})$.

    The values $F(0),\ldots,F(16)$ are
    \[
        2,12,1,9,5,2,2,9,8,12,7,16,10,6,12,6,0.
    \]
    The nonzero squares modulo $17$ are $1,2,4,8,9,13,15,16$, so this list gives $\#C(\FF_{17})=18$.  The trace of Frobenius on the Jacobian over $\FF_{17}$ is therefore zero, and its characteristic polynomial is $Z^4+\kappa_2Z^2+17^2$. Each of its four Frobenius roots has complex absolute value $\sqrt{17}$ by the Weil bound.  The coefficient $\kappa_2$ is the sum of their six pairwise products, so $|\kappa_2|\leq6\cdot17$.  Hence
    \[
        \#J(\FF_{17})=17^2+1+\kappa_2\leq392<2\cdot324.
    \]
    The divisibility already proved forces $\#J(\FF_{17})=324$.

    Finally, $\gcd(m,6)=1$ makes multiplication by $m$ an automorphism of $J(\FF_{17})$.  Since $Q=mG$, the reduction of $J(\QQ)$ is exactly $\langle\overline Q\rangle$.  Its intersection with the embedded curve was determined above.  Thus every rational point reduces to $\infty,(2,1)$, or $(2,16)$, as claimed.
\end{proof}

\subsection{The local calculation at 17}

We continue with the model $C$ in \eqref{app:odd-model} and the class $Q=[(2,-16)-\infty]$.

For a divisor class $D\in J(\QQ_{17})$ and a regular differential $\omega$ on $C_{\QQ_{17}}$, write $\int_D\omega$ for the Abelian logarithm pairing \cite[Sections~4.1 and~5.1]{McCallumPoonen2012} of $D$ with the corresponding invariant differential on $J$. This pairing is additive in $D$; after a finite extension of $\QQ_{17}$, it is compatible with base change and Galois conjugation.

\begin{lemma}\label{app:tiny-integrals}
    Put \(\omega_0=dx/y\) and \(\omega_1=x\,dx/y\). Then
    \[
        \frac1{17}
        \left(\int_{18Q}\omega_0,\int_{18Q}\omega_1\right)
        \equiv (10,2)\pmod {17}.
    \]
\end{lemma}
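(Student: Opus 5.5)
Since $18Q$ reduces to $0$ in $J(\FF_{17})$ by Lemma~\ref{app:reduction-image}, the plan is to represent $18Q$ by a divisor whose points all reduce to $\infty$, and then compute the logarithm by tiny integrals in the residue disc of $\infty$. I would first compute $9Q$ over $\QQ$ (or at least modulo $17^2$) by Cantor's composition from the same addition rule used in Lemma~\ref{app:reduction-image}. Modulo $17$ this gives the two-torsion class with Mumford pair $(x^2+16x+1,0)$. Its first component factors over $\FF_{17}$ or $\FF_{289}$ into roots $x_1,x_2$ that are roots of $F$ modulo $17$, and these points are Weierstrass points mod $17$. Write $9Q=[P_1+P_2-2\infty]$ with $P_i=(x_i,y_i)$ defined over an unramified extension of $\QQ_{17}$, and let $W_i=(\bar x_i,0)$ be the Weierstrass points in the same residue discs, with $\bar x_i$ the $17$-adic roots of $F$. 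Since $2[W_i-\infty]=0$ and $\int_{[W_i-\infty]}\omega=0$ (the logarithm kills torsion), additivity gives
\[
\int_{18Q}\omega=2\sum_{i=1}^2\int_{W_i}^{P_i}\omega .
\]
Each term is a tiny integral inside a Weierstrass residue disc, so the log-pairing reduces to a power-series integral in a local parameter, and the result is compatible with Galois conjugation in the sense recalled before the lemma.

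In a Weierstrass disc I would use the parameter $y$: from $y^2=F(x)$ with $F'(\bar x_i)$ a unit one gets $x=\bar x_i+y^2/F'(\bar x_i)+O(y^4)$. Hence $\omega_j=x^j\,dx/y=(2x^j/F'(x))\,dy$, and
\[
\int_{W_i}^{P_i}\omega_j=\frac{2\bar x_i^{\,j}}{F'(\bar x_i)}\,y_i+O(y_i^3).
\]
Because $y_i\equiv0\pmod{17}$, to first order modulo $17^2$ only $y_i/17 \bmod 17$ and $\bar x_i \bmod 17$ are needed; the $O(y_i^3)$ terms vanish to order $17^3$. So the key data are the Mumford pair of $9Q$ modulo $17^2$, namely $a\equiv x^2+\dots$ and $b\equiv 17b_1(x)\pmod{17^2}$. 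Then $y_i=b(x_i)$, and $x_i\equiv\bar x_i\pmod{17}$ is enough for the leading term. Summing over the two conjugate points turns the expression into a trace, which is $17$ times an element of $\FF_{17}$. This should produce the vector $(10,2)$.

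The main obstacle is obtaining $9Q$ modulo $17^2$ reliably. Its coordinates over $\QQ$ are huge, so I would run Cantor's algorithm directly in $\ZZ/17^2\ZZ$. That is legitimate because all leading coefficients and inverses involved are units, which the mod-$17$ table already shows since the degrees stay the same. As a fallback I would double-check by computing $18Q$ directly as $2\cdot9Q$ mod $17^3$, or by verifying that the answer transforms correctly under $Q\mapsto -Q$ (sign flip). There is a further subtlety if $x^2+16x+1$ is reducible mod $17$ but $F$ has repeated structure there. Its discriminant is $255-1\equiv 14\cdot\ldots$, which I would check; either way the trace formulation handles both the split and inert cases uniformly.
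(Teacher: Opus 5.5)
Your argument is correct, and it takes a genuinely different route from the paper.

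\textbf{The paper's route.} The paper writes $18Q=D_4-D_{-14}$. Here $D_4=4Q=(x^2-x+6,\,x-26)$ is known exactly, and $D_{-14}$ is computed modulo $289$ through the chain $2Q,4Q,6Q,7Q,14Q$. Both supports lie in the same non-Weierstrass residue discs, since the $y$-coordinate $\alpha_{17}-26$ is a unit. The paper therefore integrates in the parameter $x-\alpha_{17}$. It also needs a Hensel estimate for how far the roots of $a_{-14}$ are displaced from those of $a_4$.

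\textbf{Your route.} You use instead that $9Q$ reduces to the $2$-torsion class $(x^2+16x+1,0)$, supported at two conjugate Weierstrass points. Discarding the torsion classes $[W_i-\infty]$ gives $\log(18Q)=2\sum_i\int_{W_i}^{P_i}\omega$. In the parameter $y$, only $b_9/17 \bmod 17$ and the residues of the $x$-coordinates enter, so no root-displacement step is needed.

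\textbf{The open computation closes.} Interpolate the paper's rows for $7Q$ and $2Q$ with $l=105x^3+269x^2+241x+187$. This gives $F-l^2\equiv246\,a_7a_2\,(x^2+186x+52)\pmod{289}$, so $9Q\equiv(x^2+186x+52,\,-85(x+1))\pmod{289}$. Since $x^2+16x+1\equiv x^2-x+1\pmod{17}$, a root $\bar x\in\FF_{289}$ is a primitive sixth root of unity, and $F'(\bar x)=\bar x+4$. Your leading-term formula then gives $\frac1{17}\int_{18Q}\omega_j\equiv-20\,\Tr_{\FF_{289}/\FF_{17}}\bigl(\bar x^{\,j}(\bar x+1)/(\bar x+4)\bigr)$. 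This is $10$ for $j=0$ and $2$ for $j=1$, in agreement with the lemma.

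\textbf{Details to tighten.}
\begin{enumerate}
\item Lifting Cantor's steps to $\ZZ/17^2\ZZ$ is not justified by the degrees staying the same. What is needed is the set of conditions the paper checks. In an addition, the input first components must be coprime modulo $17$; when doubling, $2b$ must be invertible modulo $a$. In both cases the scalar $\lambda$ must be a unit. Here these hold: $a_7(2)\equiv8$ and $\lambda=246\equiv8\pmod{17}$.
\item In the precision estimate, account for the denominators. The degree-$(2k+1)$ term has valuation at least $2k+1-v_{17}(2k+1)\geq3$.
\item The discriminant of $x^2+16x+1$ is $252\equiv14$, a nonsquare modulo $17$, so you are in the inert case. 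Also, $F$ is separable modulo $17$, so the ``repeated structure'' case you worry about does not arise.
\end{enumerate}
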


\begin{proof}
    We use the Mumford notation from the proof of Lemma~\ref{app:reduction-image}. In particular, $Q=(x-2,-16)$. Tangent interpolation gives the exact identities
    \[
        2Q=(x^2-4x+4,32-24x),\qquad
        D_4:=4Q=(x^2-x+6,x-26).
    \]
    For the second identity one can use \(l=8-4x-2x^2-x^3\), since
    \[
        F-l^2=-(x-2)^4(x^2-x+6).
    \]

    To determine \(-14Q\), put \((a_n,b_n)=nQ\). All polynomial entries in the following two tables are reduced modulo \(289\).
    \[
        \begin{array}{c|l|l}
            n  & a_n          & b_n      \\ \hline
            2  & x^2+285x+4   & 265x+32  \\
            4  & x^2+288x+6   & x+263    \\
            6  & x^2+192x+161 & 213x+57  \\
            7  & x^2+130x+152 & 215x+117 \\
            14 & x^2+237x+210 & 135x+162
        \end{array}
    \]
    The following interpolation identities verify the preceding table.
    \[
        \begin{array}{c|l|r}
            (i,j) & l                      & \lambda \\ \hline
            (2,2) & 8+285x+287x^2+288x^3   & 288     \\
            (4,2) & 22+261x+156x^2+141x^3  & 60      \\
            (6,1) & 27+258x+77x^2          & 13      \\
            (7,7) & 196+175x+237x^2+191x^3 & 222
        \end{array}
    \]
    For each row, direct multiplication in $(\ZZ/289\ZZ)[x]$ gives
    \[
        F-l^2=\lambda a_i a_j a_{i+j},
        \qquad b_{i+j}\equiv-l\pmod {a_{i+j}}.
    \]
    Moreover, \(l\equiv b_i\pmod {a_i}\) and \(l\equiv b_j\pmod {a_j}\); when \(i=j\), the displayed factor \(a_i^2\) gives the required repeated contact. These are precisely the divisor identities for addition using the function \(y-l(x)\). The interpolation lifts over \(\ZZ_{17}\): the source polynomials in the addition rows are coprime modulo \(17\), the elements \(2b_i\) in the doubling rows are invertible modulo \(a_i\), and every \(\lambda\) is a unit modulo \(17\). Thus these tables determine the reductions of the indicated rational divisor classes.

    Negating the second Mumford coordinate now gives
    \[
        \begin{aligned}
            D_{-14} & :=-14Q=(a_{-14},b_{-14}),       \\
            a_{-14} & \equiv x^2+237x+210\pmod {289}, \\
            b_{-14} & \equiv154x+127\pmod {289}.
        \end{aligned}
    \]
    We have \(D_4-D_{-14}=18Q\), and \(D_4\) and \(D_{-14}\) have the same reduction modulo \(17\). Writing \(a_4=x^2-x+6\), we obtain
    \begin{equation}\label{app:root-displacement}
        \frac{a_{-14}-a_4}{17}\equiv-3x-5\pmod {17}.
    \end{equation}

    The discriminant of \(a_4\) is \(-23\equiv11\pmod {17}\), a nonsquare. The polynomial's roots therefore lie in the unramified quadratic extension \(E_{17}/\QQ_{17}\). Let \(\alpha_{17}\) be a root of \(a_4\), and let \(\beta_{17}\) be the root of \(a_{-14}\) with the same reduction. The root is simple modulo \(17\), so Hensel's lemma and \eqref{app:root-displacement} give
    \[
        \beta_{17}-\alpha_{17}\equiv
        -17\frac{-3\alpha_{17}-5}{2\alpha_{17}-1}\pmod {17^2}.
    \]
    The \(y\)-coordinate at \(\alpha_{17}\) is \(\alpha_{17}-26\), a unit. Consequently \(x-\alpha_{17}\) is an integral local parameter, and the matching branch at \(\beta_{17}\) has \(y=b_{-14}(\beta_{17})\). After base change to $E_{17}$, the two support points of $D_{-14}$ are matched with those of $D_4$ in the same residue discs.  Additivity identifies the integral over $D_4-D_{-14}=18Q$ with the sum of these two tiny integrals.  Galois conjugation interchanges the summands, so their sum is the trace from $E_{17}$ to $\QQ_{17}$ of either tiny integral. Reducing it modulo $17^2$ gives
    \begin{equation}\label{app:trace-integral}
        \frac1{17}\int_{18Q}\omega_i
        \equiv
        \Tr_{\FF_{17^2}/\FF_{17}}
        \left(
        \frac{\alpha_{17}^i(-3\alpha_{17}-5)}
        {(2\alpha_{17}-1)(\alpha_{17}+8)}
        \right)\pmod {17},
        \qquad i=0,1.
    \end{equation}
    Here \(\alpha_{17}\) on the right denotes its reduction. To justify the precision, write $s=x-\alpha_{17}$. The expansions of $x^i/y$, for $i=0,1$, in $s$ have integral coefficients: $2y$ is a unit, and $x=\alpha_{17}+s$ with $\alpha_{17}$ integral. For \(k\ge2\), a term of degree \(k\) in either antiderivative, evaluated at \(\beta_{17}-\alpha_{17}\in17\mathcal O_{E_{17}}\), has valuation at least \(k-v_{17}(k)\ge2\). All omitted terms therefore vanish modulo \(17^2\).

    It remains to evaluate the two traces from $\FF_{17^2}$ to $\FF_{17}$. In \(\FF_{17}[\alpha_{17}]/(\alpha_{17}^2-\alpha_{17}+6)\) we have
    \[
        (2\alpha_{17}-1)(\alpha_{17}+8)=-3,\qquad (-3)^{-1}=-6,
    \]
    and hence
    \[
        \frac{-3\alpha_{17}-5}{(2\alpha_{17}-1)(\alpha_{17}+8)}
        =\alpha_{17}-4,\qquad
        \alpha_{17}(\alpha_{17}-4)=-3\alpha_{17}-6.
    \]
    Since \(\Tr_{\FF_{17^2}/\FF_{17}}(\alpha_{17})=1\) and \(\Tr_{\FF_{17^2}/\FF_{17}}(1)=2\), the two traces are \(10\) and \(2\), respectively. Equation \eqref{app:trace-integral} proves the lemma.
\end{proof}

\begin{lemma}\label{app:rational-points}
    The rational points of \(C\) and \(H\) are
    \[
        C(\QQ)=\{\infty,(2,-16),(2,16)\},\qquad
        H(\QQ)=\{(-1/2,0),(0,-1),(0,1)\}.
    \]
\end{lemma}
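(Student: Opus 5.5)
We combine the rank bound of Theorem~\ref{app:rank-one}, the residue restriction of Lemma~\ref{app:reduction-image}, and the integrals of Lemma~\ref{app:tiny-integrals} in a one-prime Chabauty argument at $17$. Since the Jacobians of $H$ and $C$ are identified and the rank is one, Lemma~\ref{app:reduction-image} shows that every $P\in C(\QQ)$ reduces modulo $17$ to $\infty$, $(2,1)$, or $(2,16)$. These three points are the reductions of the known rational points $\infty$, $(2,16)$, $(2,-16)$ (note $-16\equiv1\pmod{17}$). It therefore suffices to prove that each of these three residue discs contains at most one rational point.

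First we build an annihilating differential. Because $J(\QQ)$ has rank one and is torsion-free, and $Q$ has infinite order, every class in $J(\QQ)$ has a nonzero multiple in $\ZZ Q$. Hence a regular differential $\omega=\lambda_0\omega_0+\lambda_1\omega_1$ with $\int_{18Q}\omega=0$ annihilates all of $J(\QQ)$ under the logarithm pairing. Lemma~\ref{app:tiny-integrals} gives $\frac1{17}\int_{18Q}(\omega_0,\omega_1)\equiv(10,2)$, so this vector is nonzero modulo $17$. We take $(\lambda_0,\lambda_1)\in\ZZ_{17}^2$ primitive and proportional to $(\int_{18Q}\omega_1,-\int_{18Q}\omega_0)$; modulo $17$ this is $(2,-10)$, and after scaling, $\omega\equiv(1-5x)\,dx/y$ modulo $17$. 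Only the direction of $\omega$ mod $17$ matters, so the unknown index of $\ZZ Q$ in $J(\QQ)$ is harmless.

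Next we check that $\overline\omega$ does not vanish at the three reduced points, so that Stoll's bound (or a direct Strassmann argument) allows at most one rational point per disc. At $(2,\pm16)$ the local parameter is $x-2$, and $\overline\omega=(1-10)\,dx/y$ has $y\equiv\pm1$ a unit. The coefficient $-9\not\equiv0\pmod{17}$, so $\overline\omega$ is nonvanishing there. At $\infty$ the parameter is $x^2/y$, and in it $dx/y$ vanishes to order $2$ while $x\,dx/y$ vanishes to order $0$. Since $\lambda_1\equiv-5\not\equiv0$, $\overline\omega$ does not vanish at $\infty$. In each disc the function $P\mapsto\int_{P_0}^P\omega$, with $P_0$ the known rational point, is a power series $\sum c_k s^k/k$ in a parameter $s\in17\ZZ_{17}$ with $c_0$ a unit. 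Since $p=17>2$, the Strassmann argument (as in the valuation estimate in Lemma~\ref{app:tiny-integrals}) shows its only zero in $17\ZZ_{17}$ is $s=0$. Every rational $P$ in the disc gives a class $[P-P_0]\in J(\QQ)$ and hence a zero, so $P=P_0$. This proves $C(\QQ)=\{\infty,(2,\pm16)\}$.

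Finally we pass to $H$ through $x=(u+1/2)^{-1}$, $y=2x^3v$. Here $\infty\mapsto W=(-1/2,0)$ and $(2,\pm16)\mapsto P_\pm=(0,\pm1)$. The isomorphism is defined over $\QQ$, so it identifies the rational points and gives $H(\QQ)=\{(-1/2,0),(0,-1),(0,1)\}$. The main obstacle is the nonvanishing check at $\infty$, since it needs the correct parameter and pole orders there. The direction of $\omega$ mod $17$ is already fixed by Lemma~\ref{app:tiny-integrals}.
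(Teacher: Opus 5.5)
Your proposal is correct and follows the paper's proof essentially step for step. You use the same annihilating differential proportional to $\omega_0-(\mathcal L_0/\mathcal L_1)\omega_1$, with reduction $(1-5x)\,dx/y$, the same nonvanishing checks at $(2,\pm16)$ and at $\infty$ via the uniformizer $x^2/y$, and the same transfer to $H$. The only difference is that you spell out the Strassmann-type valuation estimate (using $17>2$ and a unit leading coefficient) where the paper cites McCallum--Poonen's Theorem~5.3(a); note that your series should read $\sum_{k\ge1}c_{k-1}s^k/k$ rather than $\sum c_k s^k/k$.
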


\begin{proof}
    By Theorem~\ref{app:rank-one} and Lemma~\ref{app:reduction-image}, \(J(\QQ)\) has rank one, and every rational point of \(C\) reduces at \(17\) to one of \(\infty,(2,1),(2,16)\). Put \(\mathcal L_i=\int_{18Q}\omega_i\). Lemma~\ref{app:tiny-integrals} shows that \(\mathcal L_0/17\) and \(\mathcal L_1/17\) are units. The regular differential
    \[
        \omega=\omega_0-\frac{\mathcal L_0}{\mathcal L_1}\omega_1
    \]
    annihilates \(Q\), and therefore annihilates all of \(J(\QQ)\): the nonzero class \(Q\) spans \(J(\QQ)\otimes\QQ\). No assertion that \(Q\) is a generator of \(J(\QQ)\) is used. The reduction of this differential is
    \[
        \overline\omega=(1-5x)\frac{dx}{y}.
    \]

    At \(x=2\), the coefficient \(1-5x\) is \(-9\equiv8\pmod {17}\), and \(y\ne0\). Thus \(\overline\omega\) does not vanish at either finite residue class under consideration. At infinity, \(s_\infty=x^2/y\) is a uniformizer and
    \[
        x=\frac1{13s_\infty^2}+O(1),\qquad
        \frac{x\,dx}{y}=-2\,ds_\infty+O(s_\infty^2)\,ds_\infty,\qquad
        \frac{dx}{y}=O(s_\infty^2)\,ds_\infty.
    \]
    Consequently \(\overline\omega=10\,ds_\infty+O(s_\infty^2)\,ds_\infty\) there, so it does not vanish at infinity either.

    The curve has good reduction at $17$, the differential $\omega$ annihilates $J(\QQ)$, and its reduction is nonzero with vanishing order zero in each of the three remaining residue classes.  Since $0<17-2$, the local Chabauty bound \cite[Theorem~5.3(a)]{McCallumPoonen2012} gives at most one rational point in each class. These residue classes already contain \(\infty,(2,-16),(2,16)\), respectively. This determines the rational points of \(C\). The birational change of variables gives the stated rational points of \(H\).
\end{proof}

\begin{proof}[Proof of Theorem~\ref{thm:louboutin19}]
    Let $(A,B)\in\QQ^2$ satisfy $\Phi(A,B)=0$. If $A=0$, the identity
    \[
        \Phi(0,B)=-(B-1)^2(B^2+2B+3)
    \]
    forces $B=1$, because $B^2+2B+3=(B+1)^2+2$ has no rational zero. Suppose that $A\ne0$, and put
    \[
        u=\frac{B-1}{A},\qquad v=2u^4A+4u^3+4u^2-1.
    \]
    The identities in the normalization calculation give $(u,v)\in H(\QQ)$. Lemma~\ref{app:rational-points} leaves only $(0,-1)$, $(0,1)$, and $(-1/2,0)$. If $u=0$, the displayed formula forces $v=-1$ and $B=1$. Then $\Phi(A,1)=A^2(A-2)=0$ gives $A=2$. If $u=-1/2$ and $v=0$, it gives $0=A/8-1/2$, so $A=4$ and $B=-1$. Direct substitution verifies all three pairs in the statement. Since all three are integral, this also proves Louboutin's conjecture.
\end{proof}

\subsection{The unit-power criterion and the exceptional parameters}

\begin{proof}[Proof of Theorem~\ref{app:unit13}]
    We use a unit-power criterion and Theorem~\ref{thm:louboutin19} to show that $13\mid\varepsilon_t$ can occur only for $t=12$ or $t=66$. We then compute the two unit indices. Write $\xi=-\theta_t$ and take $\sigma=\tau^{-1}$, where $\tau$ is the Galois generator chosen in the proof of \Cref{lem:three-divisibility}. Thus $\sigma(\xi)=1/(1-\xi)$. As before, put
    \[
        \Lambda_{\OO}=\OO_t^\times/\{\pm1\},\qquad \Lambda_R=R_t^\times/\{\pm1\}.
    \]
    Both groups are free of rank two, and the inclusion of unit groups identifies $\Lambda_R$ with a sublattice of $\Lambda_{\OO}$. Thomas's theorem identifies the classes of $\xi,\xi-1$ as a basis of $\Lambda_R$, and $[\Lambda_{\OO}:\Lambda_R]=\varepsilon_t$. Smith normal form gives $13\mid\varepsilon_t$ if and only if
    \[
        \ker(\Lambda_R/13\Lambda_R\longrightarrow \Lambda_{\OO}/13\Lambda_{\OO})\ne0.
    \]
    The kernel is stable under $\sigma$. Modulo signs,
    \[
        \sigma(\xi)=(\xi-1)^{-1},\qquad
        \sigma(\xi-1)=\xi(\xi-1)^{-1},
    \]
    so its action on exponent columns relative to the ordered basis $(\xi,\xi-1)$ of $\Lambda_R$ is
    \[
        M_\sigma=\begin{pmatrix}0&1\\-1&-1\end{pmatrix}.
    \]
    Over $\FF_{13}$ the two eigenlines are generated by $(3,1)$ and $(1,3)$, with eigenvalues $9$ and $3$. Since the eigenvalues are distinct, every nonzero invariant subspace contains one of these eigenlines. Signs can be absorbed in thirteenth powers because $13$ is odd. For $p=13$, this gives the criterion in Louboutin's Lemma~13 \cite[pp.~73--74]{Louboutin2020}:\footnote{The prime condition in the printed lemma should be $p=3$ or $p\equiv1\pmod3$.} $13\mid\varepsilon_t$ if and only if
    \begin{equation}\label{app:power-criterion}
        \xi^3(\xi-1)=\eta^{13}\ \text{or}\
        \xi(\xi-1)^3=\eta^{13}
        \quad\text{for some }\eta\in\OO_t^\times.
    \end{equation}
    Conversely, either power relation gives a nonzero element of the kernel above.

    Suppose first that $\eta^{13}=\xi^3(\xi-1)$. Conjugating and using uniqueness of real thirteenth roots gives
    \[
        \sigma(\eta)=\eta^{-4}\xi,\qquad
        \sigma^2(\eta)=-\eta^3\xi^{-1}.
    \]
    Indeed, taking thirteenth powers on the right verifies the conjugated relations directly. Following the trace construction \cite[equations~(6)--(7), p.~75, and Conjecture~15, p.~76]{Louboutin2020}, put
    \[
        V(A,B)=A+A^{-4}B-A^3B^{-1}.
    \]
    Multiplication after clearing denominators verifies the identity
    \[
        \Phi\bigl(V(A,B),V(A^{-1},B^{-1})\bigr)
        =-\frac{(A^{13}-B^3(B-1))(A^{15}+A^8B-B^4)}{A^{12}B^4}.
    \]
    The denominators are nonzero for units. Consequently
    \[
        \Phi(\Tr_{K_t/\QQ}(\eta),\Tr_{K_t/\QQ}(\eta^{-1}))=0.
    \]
    For the second relation in \eqref{app:power-criterion}, put $\zeta=1-\xi$. Then $\sigma^2(\zeta)=1/(1-\zeta)$ and $\zeta^3(\zeta-1)=\xi(\xi-1)^3=\eta^{13}$, so the same trace argument applies.

    Theorem~\ref{thm:louboutin19} now gives
    \[
        (\Tr_{K_t/\QQ}(\eta),\Tr_{K_t/\QQ}(\eta^{-1}))\in\{(0,1),(2,1),(4,-1)\}.
    \]
    The $K_t/\QQ$-norm of $\xi$ is $-1$ and that of $\xi-1$ is $1$, so either power relation implies $\Norm_{K_t/\QQ}(\eta)=-1$. If $a=\Tr_{K_t/\QQ}(\eta)$ and $b=\Tr_{K_t/\QQ}(\eta^{-1})$, the polynomial whose roots are the three conjugates of $\eta$ is
    \[
        Z^3-aZ^2-bZ+1.
    \]
    For $(a,b)=(0,1)$ this polynomial has discriminant $-23$, whereas all conjugates of $\eta$ are real. This excludes $(0,1)$.

    Louboutin \cite[p.~78]{Louboutin2020} records the trace values and the two characteristic polynomials. We verify them directly using Newton's identities and multiplication matrices. For roots $r_1,r_2,r_3$ of $Z^3-aZ^2-bZ+1$, put $s_n=\sum r_i^n$ and $q_n=\sum r_i^{-n}$. Newton's identities give
    \[
        \begin{aligned}
            (s_0,s_1,s_2) & =(3,a,a^2+2b),                &
            s_n           & =a s_{n-1}+b s_{n-2}-s_{n-3},   \\
            (q_0,q_1,q_2) & =(3,b,b^2+2a),                &
            q_n           & =b q_{n-1}+a q_{n-2}-q_{n-3}
        \end{aligned}
    \]
    for $n\geq3$. Repeated substitution yields
    \[
        \begin{array}{c|r|r}
            (a,b)  & s_{13}   & q_{13}   \\\hline
            (2,1)  & 37221    & 2094     \\
            (4,-1) & 20487939 & -319476.
        \end{array}
    \]
    Thus the polynomial of the thirteenth powers is $Z^3-s_{13}Z^2-q_{13}Z+1$.

    The multiplication matrix of $\xi$ in the basis $(1,\xi,\xi^2)$ is
    \[
        M_\xi=\begin{pmatrix}0&0&-1\\1&0&t+3\\0&1&-t\end{pmatrix}.
    \]
    Computing the $3\times3$ characteristic determinants of the matrices
    \[
        M_\xi^3(M_\xi-I)\qquad\text{and}\qquad M_\xi(M_\xi-I)^3
    \]
    gives the respective characteristic polynomials
    \[
        \begin{split}
            Z^3- & (t^4+5t^3+17t^2+25t+21)Z^2 \\
                 & +(t^3+7t^2+22t+36)Z+1,
        \end{split}
    \]
    and
    \[
        \begin{split}
            Z^3- & (t^4+7t^3+26t^2+50t+45)Z^2 \\
                 & -(t^3+2t^2+7t-6)Z+1.
        \end{split}
    \]
    Write $B_{31}=t^3+7t^2+22t+36$ and $B_{13}=t^3+2t^2+7t-6$. Comparing the coefficients of $Z$ for the two trace possibilities shows that one of the following four expressions must vanish:
    \[
        \begin{aligned}
            B_{31}+2094   & =(t+15)(t^2-8t+142),   \\
            B_{13}-2094   & =(t-12)(t^2+14t+175),  \\
            B_{31}-319476 & =(t-66)(t^2+73t+4840), \\
            B_{13}+319476 & =(t+69)(t^2-67t+4630).
        \end{aligned}
    \]
    All four quadratic factors have negative discriminant and positive leading coefficient. The condition $t\geq-1$ therefore leaves only $t=12$ and $t=66$. At these two values the coefficients of $Z^2$ also agree with the corresponding trace values.

    It remains to verify the asserted unit indices in these cases. The proof of Theorem~\ref{thm:index27} already establishes $\varepsilon_{12}=13$ by the explicit free-unit exponent matrix
    $\left(\begin{smallmatrix}-3&1\\4&3\end{smallmatrix}\right)$.
    For $t=66$, let $z_1=\theta_1$ and set $w_1=6z_1^2+9z_1+1$. With $g_1(Z)=Z^3-Z^2-4Z-1$, the identities
    \[
        \begin{aligned}
            g_{66}(6Z^2+9Z+1)    & =(216Z^3+1188Z^2+1242Z+135)g_1(Z), \\
            Z(6Z^2+9Z+1)-(Z+1)^4 & =(1-Z)g_1(Z),                      \\
            6Z^2+9Z+2-Z^3(Z+1)   & =-(Z+2)g_1(Z)
        \end{aligned}
    \]
    give
    \[
        g_{66}(w_1)=0,\qquad
        w_1=z_1^{-1}(z_1+1)^4,\qquad
        w_1+1=z_1^3(z_1+1).
    \]
    Since $z_1>2$, the number $w_1$ is positive. Descartes' rule shows that $g_{66}$ has exactly one positive root, so $w_1=\theta_{66}$. Irreducibility then gives $K_{66}=K_1$. As $\Delta_1=13$ is prime, the conductor formula gives $R_1=\OO_1=\OO_{66}$. By Thomas's theorem, the classes of $z_1,z_1+1$ form a basis of $\OO_{66}^{\times}/\{\pm1\}$, and the classes of $w_1,w_1+1$ form a basis of $R_{66}^{\times}/\{\pm1\}$. The exponent matrix is
    \[
        \begin{pmatrix}-1&3\\4&1\end{pmatrix},
    \]
    whose determinant is $-13$. Hence $\varepsilon_{66}=13$.
\end{proof}

\section{Ideal classes at prime additive index}
\label{sec:ideal-classes}

Combining the unit-index result in \Cref{cor:prime-index-unit-rigidity} with the local structure makes the change-of-order sequence explicit.

Throughout this section, assume that $N_t$ is a rational prime, and put $p=N_t$. Let $\mathfrak P$ be the unique prime ideal of $\OO_t$ above $N_t$, and put $\mathfrak p=\mathfrak P\cap R_t$. By \Cref{cor:prime-local-structure} and its proof,
\[
    N_t\OO_t=\mathfrak P^3,\qquad R_t=\ZZ+\mathfrak P^2,\qquad \mathfrak p=\mathfrak f_t=\mathfrak P^2.
\]
We prove in \Cref{cor:cardinalities} that $R_t$ is a Bass order and that $\ICM(R_t)$ has the two Picard strata $\Pic(R_t)$ and $\Pic(\OO_t)$.

If $N_t$ is squarefree and composite, or the cube of a rational prime, intermediate orders may occur. These formulas do not extend unchanged to those cases.

\begin{corollary}\label{cor:cardinalities}
    Let $t\in\ZZ$ with $t\geq -1$, and suppose that $N_t$ is a rational prime. Then
    \[
        \ker\!\left(\Pic(R_t)\longrightarrow\Pic(\OO_t)\right)
        \cong
        \begin{cases}
            \{1\},   & (t,N_t)=(3,3)\text{ or }(5,7), \\
            C_{N_t}, & \text{otherwise}.
        \end{cases}
    \]
    Consequently,
    \[
        h(R_t)=
        \begin{cases}
            h(\OO_t),     & (t,N_t)=(3,3)\text{ or }(5,7), \\
            N_t h(\OO_t), & \text{otherwise},
        \end{cases}
    \]
    and
    \[
        \#\ICM(R_t)=
        \begin{cases}
            2h(\OO_t),       & (t,N_t)=(3,3)\text{ or }(5,7), \\
            (N_t+1)h(\OO_t), & \text{otherwise}.
        \end{cases}
    \]
    Here $h(S)=\#\Pic(S)$.
\end{corollary}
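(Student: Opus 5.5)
We use the change-of-order exact sequence of Kl\"uners--Pauli for $R_t\subseteq\OO_t$:
\[
1\to R_t^\times\to\OO_t^\times\to\frac{(\OO_t/\mathfrak f_t)^\times}{(R_t/\mathfrak f_t)^\times}\to\Pic(R_t)\to\Pic(\OO_t)\to1 .
\]
\Cref{cor:prime-local-structure} identifies the residue quotient with $(\FF_{N_t},+)\cong C_{N_t}$. Hence the Picard kernel is the cokernel of $\OO_t^\times/R_t^\times\hookrightarrow C_{N_t}$. By \Cref{cor:prime-index-unit-rigidity}, $\varepsilon_t=N_t$ exactly when $(t,N_t)\in\{(3,3),(5,7)\}$, and $\varepsilon_t=1$ otherwise. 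In the first case the injection is onto and the kernel is trivial. In the second case the kernel is all of $C_{N_t}$. Surjectivity of $\Pic(R_t)\to\Pic(\OO_t)$ then gives $h(R_t)=(N_t/\varepsilon_t)h(\OO_t)$, which is the stated class-number formula.

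For the ideal class monoid we first show that $R_t$ is Bass, i.e.\ that every overorder is Gorenstein. The local description $S_p=\ZZ_p+\mathfrak P^2A_p$ with $[A_p:S_p]=p$ has no intermediate $\ZZ_p$-lattice rings: the quotient $A_p/S_p$ has prime order. So the only overorders of $R_t$ are $R_t$ and $\OO_t$. Moreover $S_p$ is Gorenstein. Its dual, or equivalently the codimension of $\mathfrak p S_p$ in its colon $(\mathfrak pS_p:\mathfrak pS_p)$, can be computed directly: $\mathfrak p=\mathfrak P^2$ and $(\mathfrak P^2:\mathfrak P^2)=\OO_t$, and $\dim_{\FF_p}\OO_t/\mathfrak P^2 = 2 \le 2$. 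This is the standard characterization that $(\mathfrak p^{-1}\cdot\ldots)$ has length one. So $R_t$ is Gorenstein and, being contained only in itself and the maximal order, Bass.

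For a Bass order every fractional ideal $I$ is invertible over its multiplier ring $(I:I)$, which is an overorder. Hence $\ICM(R_t)$ is the disjoint union of $\Pic(R_t)$ and $\Pic(\OO_t)$, the strata indexed by the two overorders. (Marseglia's stratification by multiplier rings can be cited for this.) Therefore $\#\ICM(R_t)=h(R_t)+h(\OO_t)$. This gives $2h(\OO_t)$ in the two exceptional cases and $(N_t+1)h(\OO_t)$ otherwise.

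The main obstacle is the Gorenstein/Bass verification. If the length computation is awkward, I would instead show directly that every $S_p$-lattice $I$ with $(I:I)=S_p$ is principal. Scale $I$ so that $S_p\subseteq I\subseteq A_p$ with $1\in I\setminus\mathfrak PA_p$. Then $I$ is either $S_p$ or $A_p$, because $A_p/S_p$ is simple. This forces $I$ to be principal over its multiplier ring.
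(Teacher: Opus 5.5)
Your proof is correct and follows essentially the same route as the paper. The Kl\"uners--Pauli sequence with residue quotient $C_{N_t}$, together with \Cref{cor:prime-index-unit-rigidity}, gives the Picard kernel and $h(R_t)$. The fact that $R_t$ and $\OO_t$ are the only overorders then gives the Bass decomposition $\ICM(R_t)=\Pic(R_t)\sqcup\Pic(\OO_t)$.

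The only divergence is the Gorenstein step, where the paper simply cites that monogenic orders are Gorenstein \cite[Corollary~2.12]{Marseglia2020}. Your local check is valid, but state it cleanly as $(S_p:\mathfrak P^2A_p)=A_p$ with $A_p/S_p$ of length one, in place of the garbled ``$\le 2$'' sentence. Your fallback classification also works once you note that $IA_p$ is principal, so $I$ can be scaled to satisfy $1\in I\subseteq A_p$.
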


\begin{proof}
    By \Cref{cor:prime-local-structure}, one has $\mathfrak f_t=\mathfrak P^2$ and the residue-unit quotient is $C_{N_t}$. The change-of-order exact sequence \cite[Theorem~5.6 and Proposition~6.2]{KluenersPauli2005} therefore specializes to
    \[
        1\longrightarrow R_t^\times\longrightarrow\OO_t^\times
        \longrightarrow C_{N_t}\longrightarrow\Pic(R_t)
        \longrightarrow\Pic(\OO_t)\longrightarrow1.
    \]
    Thus
    \[
        \#\ker\bigl(\Pic(R_t)\to\Pic(\OO_t)\bigr)
        =\frac{N_t}{[\OO_t^\times:R_t^\times]},
    \]
    and the first two assertions follow from \Cref{cor:prime-index-unit-rigidity}.

    Every intermediate order $S$ determines an additive subgroup $S/R_t\subseteq\OO_t/R_t$. The latter group has prime order $N_t$. Thus $S=R_t$ or $S=\OO_t$.

    The monogenic order $R_t$ is Gorenstein by Marseglia's corollary \cite[Corollary~2.12]{Marseglia2020}; the maximal order $\OO_t$ is Dedekind and hence Gorenstein. Since these are the only overorders, $R_t$ is a Bass order. The Bass decomposition \cite[Proposition~3.7]{Marseglia2020} (see also Proposition~6.3 of Cho, Hong, and Lee \cite[Proposition~6.3]{ChoHongLeeBass2025}) therefore takes the form
    \[
        \ICM(R_t)=\Pic(R_t)\sqcup\Pic(\OO_t).
    \]
    Taking cardinalities gives the last formula.
\end{proof}

For comparison, Theorem~5.3(b) of Arpin, Marseglia, and Springer \cite[Theorem~5.3(b), p.~17]{ArpinMarsegliaSpringer2025} uses $\operatorname{Cl}(S)$ for our $\Pic(S)$, and $\mathfrak l$ for the unique prime ideal of $R_t$ containing the conductor. In the present setting,
\[
    \mathfrak l=\mathfrak p=\mathfrak f_t=\mathfrak P^2,
    \qquad
    (\mathfrak l:\mathfrak l)=\OO_t.
\]
Their multiplicator-ring chain is therefore $R_t\subsetneq\OO_t$. The unique prime ideal $\mathfrak P$ above $\mathfrak l$ satisfies $\mathfrak P^2=\mathfrak l$, so this is their ``ramified'' case and $\delta_{\mathfrak l}=0$.

\subsection{Explicit fibers}\label{sec:explicit-fibers}

For a full fractional $R_t$-ideal $\mathfrak a$, write
\[
    (\mathfrak a:\mathfrak a)=\{x\in K_t:x\mathfrak a\subseteq \mathfrak a\}
\]
for its multiplicator ring.

\begin{theorem}\label{thm:explicit-picard-fibers}
    With the same hypotheses as in \Cref{cor:cardinalities}, let
    \[
        \lambda_t:\Pic(R_t)\longrightarrow\Pic(\OO_t),
        \qquad [I]\longmapsto[I\OO_t],
    \]
    be the extension-of-ideals map. Fix an isomorphism of $\FF_{N_t}$-algebras
    \[
        \OO_t/\mathfrak f_t\cong\FF_{N_t}[\overline{\varrho}_p]/(\overline{\varrho}_p^2)
    \]
    under which $R_t/\mathfrak f_t$ is the constant subring $\FF_{N_t}$, and choose $\varrho_p\in\OO_t$ lifting $\overline{\varrho}_p$. For $k\in\FF_{N_t}$, let $\widetilde k\in\{0,\ldots,N_t-1\}$ be its integer representative. Put
    \[
        u_k=1+\widetilde k\varrho_p,\qquad
        I_k=u_kR_t+\mathfrak f_t.
    \]
    Each $I_k$ is a full fractional $R_t$-ideal contained in $\OO_t$, but it need not be contained in $R_t$. Here $k+\ell$ and $-k$ are computed in $\FF_{N_t}$:
    \[
        I_kI_\ell=I_{k+\ell},\qquad
        I_k^{-1}=I_{-k},\qquad
        I_k\OO_t=\OO_t.
    \]
    Moreover,
    \[
        \ker(\lambda_t)=\{[I_k]:k\in\FF_{N_t}\}.
    \]
    The classes $[I_k]$ are distinct unless $(t,N_t)=(3,3)$ or $(5,7)$. In either of these two exceptional cases, every $[I_k]$ is the principal class $[R_t]$.

    For $[\mathfrak a]\in\Pic(\OO_t)$, choose an integral representative $\mathfrak a$ coprime to $\mathfrak f_t$. Put $\mathfrak j=\mathfrak a\cap R_t$. Then $\mathfrak j$ is an invertible $R_t$-ideal, $\mathfrak j\OO_t=\mathfrak a$, and
    \[
        \lambda_t^{-1}([\mathfrak a])
        =
        \{[I_k\mathfrak j]:k\in\FF_{N_t}\}.
    \]
\end{theorem}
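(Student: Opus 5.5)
The plan is to work modulo the conductor, using $R_t=\ZZ+\mathfrak f_t$ and $\mathfrak f_t=\mathfrak P^2$ from \Cref{cor:prime-local-structure}.

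\emph{Basic properties of $I_k$.} Since $\mathfrak f_t\subseteq R_t$ is an $\OO_t$-ideal, $I_k$ is an $R_t$-submodule of $\OO_t$. It contains $\mathfrak f_t\supseteq N_t\OO_t$, so it is a full lattice. To compute products, note that $\mathfrak f_t^2=\mathfrak P^4\subseteq\mathfrak f_t$. Hence
\[
I_kI_\ell=u_ku_\ell R_t+u_k\mathfrak f_t+u_\ell\mathfrak f_t+\mathfrak f_t^2 .
\]
Because $u_k$ is a unit modulo $\mathfrak P$, one has $u_k\mathfrak f_t=\mathfrak f_t$. Moreover $u_ku_\ell\equiv u_{k+\ell}\pmod{\mathfrak f_t}$, since $\varrho_p^2\in\mathfrak P^2$ (as $\overline\varrho_p^2=0$) and $\widetilde k+\widetilde\ell\equiv\widetilde{k+\ell}\pmod{N_t}$ with $N_t\varrho_p\in\mathfrak P^3$. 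Together with $\mathfrak f_t\subseteq I_m$, this gives $I_kI_\ell=I_{k+\ell}$. Since $I_0=R_t$, we get $I_kI_{-k}=R_t$, so each $I_k$ is invertible with inverse $I_{-k}$. Finally, $I_k\OO_t=u_k\OO_t+\mathfrak f_t=\OO_t$ because $u_k\in\OO_t^\times+\mathfrak f_t$. Thus $k\mapsto[I_k]$ is a homomorphism $\FF_{N_t}\to\ker\lambda_t$.

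\emph{Identifying the kernel.} I would match this map with the connecting map $\partial$ in the change-of-order sequence of Kl\"uners--Pauli, as written in the proof of \Cref{cor:cardinalities}. The map $\partial$ sends the class of $\bar u\in(\OO_t/\mathfrak f_t)^\times$ to $[uR_t+\mathfrak f_t]$ for any lift $u$. If I want to avoid relying on that normalization, I will check directly that $\partial$ is well defined and has kernel equal to the image of $\OO_t^\times$. Suppose $uR_t+\mathfrak f_t=xR_t$. Then $x\OO_t=\OO_t$, so $x\in\OO_t^\times$, and $u\equiv xr\pmod{\mathfrak f_t}$ with $r$ a unit in $R_t/\mathfrak f_t$.

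The elements $1+k\overline\varrho_p$ represent every class of $(\OO_t/\mathfrak f_t)^\times/(R_t/\mathfrak f_t)^\times\cong(\FF_{N_t},+)$. Exactness of the sequence at $\Pic(R_t)$ then yields $\ker\lambda_t=\{[I_k]\}$.

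\emph{Distinctness.} The kernel of $k\mapsto[I_k]$ is the image of $\OO_t^\times$ in $\FF_{N_t}$, which has order $\varepsilon_t$. By \Cref{cor:prime-index-unit-rigidity}, $\varepsilon_t$ equals $1$ except in the cases $(3,3)$ and $(5,7)$, where it equals $N_t$. In the first situation the map is injective, so the classes $[I_k]$ are distinct. In the exceptional cases the map is trivial, so every $[I_k]$ equals $[R_t]$.

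\emph{Fibers.} This is the step I expect to be the main obstacle. Take $\mathfrak a$ integral and coprime to $\mathfrak P$, and put $\mathfrak j=\mathfrak a\cap R_t$. Locally at primes $\ell\neq N_t$ the two orders agree, so $\mathfrak j$ agrees with $\mathfrak a$ there. At $N_t$, coprimality gives $\mathfrak a\otimes\ZZ_{N_t}=A_{N_t}$, and therefore $\mathfrak j\otimes\ZZ_{N_t}=S_{N_t}$. To justify this I would use that intersection commutes with flat base change to $\ZZ_{(N_t)}$ and then complete. Hence $\mathfrak j$ is locally principal, so it is invertible, and $\mathfrak j\OO_t=\mathfrak a$ holds locally everywhere.

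Consequently $\lambda_t([\mathfrak j])=[\mathfrak a]$. The fiber over $[\mathfrak a]$ is then the coset $[\mathfrak j]\ker\lambda_t$, which equals $\{[I_k\mathfrak j]\}$. Such a representative $\mathfrak a$ exists because every ideal class of $\OO_t$ contains an integral ideal avoiding any given finite set of primes.
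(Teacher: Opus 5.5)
Your route is the same as the paper's. You compute the product modulo the conductor, identify $k\mapsto[I_k]$ with the connecting map of the Kl\"uners--Pauli sequence, use $\varepsilon_t$ for distinctness, and check locally that $\mathfrak j=\mathfrak a\cap R_t$ is invertible with $\mathfrak j\OO_t=\mathfrak a$. However, two justifications in your first step are false as written.

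\textbf{The identity $u_k\mathfrak f_t=\mathfrak f_t$.} Cancelling the invertible $\OO_t$-ideal $\mathfrak f_t$, this identity forces $u_k\OO_t=\OO_t$. Then $I_k=u_k(R_t+\mathfrak f_t)=u_kR_t$ would be principal for every $k$, which contradicts the distinctness you go on to prove. Concretely, outside the two exceptional cases one has $\OO_t^\times=R_t^\times\subseteq R_t=\ZZ+\mathfrak f_t$, whereas $u_k\notin R_t$ for $k\neq0$. For the same reason, your claim $u_k\in\OO_t^\times+\mathfrak f_t$ also fails.

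What is true is that $\overline{u_k}$ is a unit of $\OO_t/\mathfrak f_t$, with inverse $1-k\overline{\varrho}_p$, so $u_k\OO_t+\mathfrak f_t=\OO_t$. This gives $I_k\OO_t=\OO_t$ directly. Multiplying it by $\mathfrak f_t$ gives $u_k\mathfrak f_t+\mathfrak f_t^2=\mathfrak f_t$, which is exactly the inclusion $\mathfrak f_t\subseteq I_kI_\ell$ you need. This is how the paper argues.

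\textbf{Your formula for the connecting map.} You assert this formula but do not prove it. In the contraction convention the paper uses (Kl\"uners--Pauli, Section~8), the connecting map sends the class of $\overline u$ to $[(u\OO_t)\cap R_t]$. The paper checks locally that $I_k\cdot\bigl((u_k\OO_t)\cap R_t\bigr)=u_kR_t$, so your map is the inverse of that connecting map. The inversion does not change the image, but your appeal to exactness at $\Pic(R_t)$ for your own map needs this identity.

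Alternatively, your direct kernel computation also works. You must complete it with the easy converse: if $u\equiv xr\pmod{\mathfrak f_t}$ with $x\in\OO_t^\times$ and $r\in R_t$ a unit modulo $\mathfrak f_t$, then $uR_t+\mathfrak f_t=xR_t$. This shows that $\{[I_k]\}$ is a subgroup of $\ker\lambda_t$ of order $N_t/\varepsilon_t$. Since \Cref{cor:cardinalities} gives the same order for $\ker\lambda_t$, the two coincide without any appeal to the normalization.

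With these repairs, your distinctness argument and your description of the fibers are correct and agree with the paper's.
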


\begin{proof}
    The $R_t$-modules $\OO_t/\mathfrak f_t$ and $R_t/\mathfrak f_t$ are supported only at $\mathfrak p$, so their localization maps at $\mathfrak p$ are isomorphisms. The quotient-ring calculation in the proof of \Cref{prop:squarefree-local-structure}, together with $\mathfrak f_t=\mathfrak P^2$, gives the isomorphism used in the statement.

    Changing the lift $\varrho_p$ of $\overline{\varrho}_p$ leaves each $I_k$ unchanged, since it changes $u_k$ only by an element of $\mathfrak f_t$. If $\overline{\varrho}_p$ is replaced by $a\overline{\varrho}_p$, where $a\in\FF_{N_t}^\times$, the new ideal indexed by $k$ is the original $I_{ak}$. Thus the choice of coordinate only relabels the same family of ideals.

    In $\OO_t/\mathfrak f_t$ one has
    \[
        \overline{u_k}\,\overline{u_\ell}
        =(1+k\overline{\varrho}_p)(1+\ell\overline{\varrho}_p)
        =1+(k+\ell)\overline{\varrho}_p
        =\overline{u_{k+\ell}}.
    \]
    Since $\overline{u_k}$ is a unit, $u_k\OO_t+\mathfrak f_t=\OO_t$. Multiplying this ideal equality by $\mathfrak f_t$ gives
    \[
        u_k\mathfrak f_t+\mathfrak f_t^2=\mathfrak f_t.
    \]
    Also $u_ku_\ell-u_{k+\ell}\in\mathfrak f_t$. Therefore
    \[
        \begin{aligned}
            I_kI_\ell
             & =
            u_ku_\ell R_t
            +u_k\mathfrak f_t
            +u_\ell\mathfrak f_t
            +\mathfrak f_t^2                \\
             & =u_ku_\ell R_t+\mathfrak f_t
            =u_{k+\ell}R_t+\mathfrak f_t
            =I_{k+\ell}.
        \end{aligned}
    \]
    Thus $I_0=R_t$, $I_k^{-1}=I_{-k}$, and every $I_k$ is invertible. Extension gives $I_k\OO_t=u_k\OO_t+\mathfrak f_t=\OO_t$.

    The residue classes $1+k\overline{\varrho}_p$ exhaust
    \[
        (\OO_t/\mathfrak f_t)^\times/
        (R_t/\mathfrak f_t)^\times\cong(\FF_{N_t},+).
    \]
    With the contraction convention used in Section~8 of Kl\"uners and Pauli \cite[Section~8, ``Computing Pic($O$)'']{KluenersPauli2005}, the connecting homomorphism
    \[
        \partial_t:\frac{(\OO_t/\mathfrak f_t)^\times}{(R_t/\mathfrak f_t)^\times}\longrightarrow\Pic(R_t)
    \]
    sends the class of $1+k\overline{\varrho}_p$ to the class of $\mathfrak d_k=(u_k\OO_t)\cap R_t$.

    We relate this convention to our ideal representatives by proving $I_k\mathfrak d_k=u_kR_t$ locally. Localization commutes with the contraction defining $\mathfrak d_k$.

    At $\mathfrak p$, the element $u_k$ is a unit in $\OO_{t,\mathfrak P}$, so $(\mathfrak d_k)_{\mathfrak p}=R_{t,\mathfrak p}$. Moreover, $(I_k)_{\mathfrak p}=u_kR_{t,\mathfrak p}$ because $u_k^{-1}(\mathfrak f_t)_{\mathfrak p}=(\mathfrak f_t)_{\mathfrak p}\subseteq R_{t,\mathfrak p}$.

    At every maximal ideal $\mathfrak q\neq\mathfrak p$, the localized orders agree and $(\mathfrak f_t)_{\mathfrak q}=R_{t,\mathfrak q}$. Hence $(\mathfrak d_k)_{\mathfrak q}=u_kR_{t,\mathfrak q}$ and $(I_k)_{\mathfrak q}=R_{t,\mathfrak q}$.

    Since fractional ideals are equal when all their localizations are equal, these local identities give $I_k\mathfrak d_k=u_kR_t$.

    This identity gives $[I_k]=[\mathfrak d_k]^{-1}$. In $\OO_t/\mathfrak f_t$ one has
    \[
        (1+k\overline{\varrho}_p)(1-k\overline{\varrho}_p)=1
    \]
    because $\overline{\varrho}_p^2=0$. Since $\partial_t$ is a homomorphism,
    \[
        \partial_t([1-k\overline{\varrho}_p])
        =\partial_t([1+k\overline{\varrho}_p])^{-1}
        =[\mathfrak d_k]^{-1}
        =[I_k].
    \]
    As $k$ ranges over $\FF_{N_t}$, these classes are precisely the image of $\partial_t$, which is $\ker(\lambda_t)$ by exactness. By \Cref{cor:cardinalities}, this kernel has order $N_t$ in the nonexceptional cases and order one in the exceptional cases.

    Every ideal class of the Dedekind domain $\OO_t$ has an integral representative coprime to the conductor. Choose an integral ideal $\mathfrak b$ in the inverse class $[\mathfrak a]^{-1}$. The module form of the Chinese remainder theorem gives an element $\alpha\in \mathfrak b$ such that
    \[
        \alpha\notin\mathfrak Q \mathfrak b
    \]
    for every prime ideal $\mathfrak Q$ dividing $\mathfrak f_t$. Since $\alpha\in \mathfrak b$, the ideal $(\alpha)\mathfrak b^{-1}$ is integral and belongs to the class $[\mathfrak a]$. For each such $\mathfrak Q$, the choice of $\alpha$ gives
    \[
        v_{\mathfrak Q}((\alpha))=v_{\mathfrak Q}(\mathfrak b).
    \]
    Therefore $\mathfrak Q\nmid(\alpha)\mathfrak b^{-1}$. Thus $(\alpha)\mathfrak b^{-1}$ is coprime to $\mathfrak f_t$.

    For this choice of $\mathfrak a$, we check extension and contraction locally. At every maximal ideal $\mathfrak q\neq\mathfrak p$, the localized orders agree. Thus $\mathfrak j_{\mathfrak q}=\mathfrak a_{\mathfrak q}$ inside their common localization, and this ideal is principal. At the prime ideal $\mathfrak p$ containing the conductor, coprimality gives $\mathfrak a_{\mathfrak P}=\OO_{t,\mathfrak P}$ and hence $\mathfrak j_{\mathfrak p}=R_{t,\mathfrak p}$.

    It follows that $\mathfrak j=\mathfrak a\cap R_t$ is locally principal at every maximal ideal. Marseglia's local-principality criterion \cite[Lemma~2.9]{Marseglia2020} then shows that it is invertible. The same local calculation gives $\mathfrak j\OO_t=\mathfrak a$.

    Every class in $\lambda_t^{-1}([\mathfrak a])$ is a product of $[\mathfrak j]$ and an element of $\ker(\lambda_t)$. Conversely, every such product lies in the fiber, giving the stated classes $[I_k\mathfrak j]$.
\end{proof}

\begin{theorem}\label{thm:explicit-fibers}
    Under the same hypotheses and with the same notation as in \Cref{thm:explicit-picard-fibers}, define
    \[
        \pi_t:\ICM(R_t)\longrightarrow\Pic(\OO_t),\qquad
        [\mathfrak a]\longmapsto[\mathfrak a\OO_t].
    \]
    If $[\mathfrak a]\in\Pic(\OO_t)$ and $\mathfrak j=\mathfrak a\cap R_t$ is chosen as in \Cref{thm:explicit-picard-fibers}, then
    \begin{equation}
        \pi_t^{-1}([\mathfrak a])
        =
        \{[I_k\mathfrak j]:k\in\FF_{N_t}\}
        \sqcup\{[\mathfrak a]_{R_t}\},
        \label{eq:icm-fiber}
    \end{equation}
    where $[\mathfrak a]_{R_t}$ denotes the class of the $\OO_t$-ideal $\mathfrak a$ regarded as an $R_t$-ideal.

    This fiber has cardinality $2$ in the exceptional cases and $N_t+1$ otherwise. The multiplication action of $\ker(\lambda_t)$ on this fiber is free and transitive on its invertible part $\lambda_t^{-1}([\mathfrak a])$ and fixes the remaining class $[\mathfrak a]_{R_t}$. The multiplicator ring of the latter class is $\OO_t$.

    More generally, if $I$ is an invertible fractional $R_t$-ideal and $\mathfrak b$ is a fractional $\OO_t$-ideal, then
    \[
        [I][\mathfrak b]_{R_t}=[(I\OO_t)\mathfrak b]_{R_t}.
    \]
    Multiplication within the $\Pic(\OO_t)$ stratum is the usual multiplication in $\Pic(\OO_t)$.
\end{theorem}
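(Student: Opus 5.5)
The plan is to combine the Bass decomposition from \Cref{cor:cardinalities} with the explicit Picard fibers of \Cref{thm:explicit-picard-fibers}. Since $R_t$ is Bass and its only overorders are $R_t$ and $\OO_t$, every class in $\ICM(R_t)$ has multiplicator ring $R_t$ or $\OO_t$. In the first case the class is invertible; in the second it is the class of a fractional $\OO_t$-ideal regarded as an $R_t$-module. This is Marseglia's description \cite[Proposition~3.7]{Marseglia2020}, which underlies $\ICM(R_t)=\Pic(R_t)\sqcup\Pic(\OO_t)$.

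I first check that $\pi_t$ is well defined on the $\OO_t$-stratum. If $\mathfrak b$ is an $\OO_t$-ideal, then $\mathfrak b\OO_t=\mathfrak b$. Moreover, an $R_t$-isomorphism between two $\OO_t$-ideals is multiplication by an element of $K_t^\times$ \cite[Corollary~3.4]{Marseglia2020}, hence is also an $\OO_t$-isomorphism. So $[\mathfrak b]_{R_t}\mapsto[\mathfrak b]$ is a bijection from the second stratum onto $\Pic(\OO_t)$, and exactly one non-invertible class lies over $[\mathfrak a]$, namely $[\mathfrak a]_{R_t}$. Its multiplicator ring is $(\mathfrak a:\mathfrak a)=\OO_t$ because $\mathfrak a$ is invertible over the Dedekind domain $\OO_t$.

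On the invertible stratum, $\pi_t$ restricts to $\lambda_t$. Hence \Cref{thm:explicit-picard-fibers} gives the invertible part $\{[I_k\mathfrak j]\}$ of the fiber. The union in \eqref{eq:icm-fiber} is disjoint because multiplicator rings differ. For the cardinality: \Cref{thm:explicit-picard-fibers} says the $[I_k]$ are distinct outside the exceptional cases, and multiplying by the invertible class $[\mathfrak j]$ preserves distinctness, giving $N_t$ invertible classes and $N_t+1$ in total. In the exceptional cases all $[I_k]$ equal $[R_t]$, so the fiber has $1+1=2$ elements. Freeness and transitivity of the $\ker(\lambda_t)$-action on $\lambda_t^{-1}([\mathfrak a])$ hold because it is a coset of the subgroup $\ker(\lambda_t)$ in the group $\Pic(R_t)$.

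For the product formula, I will first prove the module identity $I\mathfrak b=(I\OO_t)\mathfrak b$. The inclusion $\subseteq$ is clear. For $\supseteq$, an element $\sum x_io_i b$ with $x_i\in I$, $o_i\in\OO_t$ and $b\in\mathfrak b$ lies in $I\mathfrak b$ since $o_ib\in\mathfrak b$. Taking classes gives $[I][\mathfrak b]_{R_t}=[(I\OO_t)\mathfrak b]_{R_t}$. Specializing to $I=I_k$, where $I_k\OO_t=\OO_t$, shows that $\ker(\lambda_t)$ fixes $[\mathfrak a]_{R_t}$. Finally, the product of two $\OO_t$-ideals taken as $R_t$-modules is their $\OO_t$-product, so multiplication on this stratum agrees with $\Pic(\OO_t)$.

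The main obstacle is the identification of isomorphism classes with homothety classes used in the well-definedness step, together with the claim that a non-invertible class has multiplicator ring exactly $\OO_t$. Both reduce to the cited Bass/ICM results, and the remaining steps are bookkeeping.
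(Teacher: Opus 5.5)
Your proof is correct and follows essentially the same route as the paper. Both arguments use the Bass decomposition $\ICM(R_t)=\Pic(R_t)\sqcup\Pic(\OO_t)$, the restriction of $\pi_t$ to $\lambda_t$ and to the identity on the two strata, disjointness via multiplicator rings, the torsor structure of the fibers of $\lambda_t$, and the identity $I\mathfrak b=(I\OO_t)\mathfrak b$. There are two minor differences: you obtain the fixed class $[\mathfrak a]_{R_t}$ by specializing the mixed-product formula to $I_k\OO_t=\OO_t$ (the paper computes $I_k\mathfrak a=(u_k\OO_t+\mathfrak f_t)\mathfrak a=\mathfrak a$ directly), and you spell out the cardinality count, which the paper leaves implicit.
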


\begin{proof}
    For every full fractional $R_t$-ideal $\mathfrak a$, the extension $\mathfrak a\OO_t$ is a nonzero fractional $\OO_t$-ideal and hence is invertible. If $\mathfrak a'=x\mathfrak a$ for some $x\in K_t^\times$, then $\mathfrak a'\OO_t=x(\mathfrak a\OO_t)$. Thus $\pi_t$ defines a monoid homomorphism on homothety classes.

    The Bass decomposition used in \Cref{cor:cardinalities} shows that $\pi_t$ restricts to $\lambda_t$ on $\Pic(R_t)$ and to the identity on the $\Pic(\OO_t)$ stratum. Therefore \eqref{eq:icm-fiber} follows. Lemmas~2.5 and~3.6 of Marseglia \cite[Lemmas~2.5 and~3.6]{Marseglia2020} show that the union is disjoint: an invertible $R_t$-ideal $I$ has $(I:I)=R_t$, whereas the invertible $\OO_t$-ideal $\mathfrak a$ has $(\mathfrak a:\mathfrak a)=\OO_t$. Since $\lambda_t$ is a surjective group homomorphism, each of its fibers is a torsor under its kernel. Moreover,
    \[
        I_k\mathfrak a=(u_kR_t+\mathfrak f_t)\mathfrak a=(u_k\OO_t+\mathfrak f_t)\mathfrak a=\mathfrak a,
    \]
    so $[\mathfrak a]_{R_t}$ is fixed by the kernel action.

    If $I$ is an invertible fractional $R_t$-ideal and $\mathfrak b$ is a fractional $\OO_t$-ideal, then
    \[
        I\mathfrak b=(I\OO_t)\mathfrak b,
    \]
    because $\mathfrak b$ is an $\OO_t$-module. This proves the mixed-product formula. The assertion about products within the $\Pic(\OO_t)$ stratum follows directly.
\end{proof}

\section{Integral matrix conjugacy}\label{sec:matrix-conjugacy}

The ideal-class fibers give a corresponding classification of integral matrices. After choosing a $\ZZ$-basis of a full $R_t$-lattice, multiplication by $\theta_t$ is represented by an integral matrix with characteristic polynomial $g_t$; its integral conjugacy class is independent of the basis.

\begin{corollary}\label{cor:matrices}
    With the same hypotheses as in \Cref{cor:prime-index-unit-rigidity}, let
    \[
        \mathcal M_t=
        \{M\in M_3(\ZZ):\det(X\operatorname{Id}_3-M)=g_t(X)\}.
    \]
    Then every $M\in\mathcal M_t$ has determinant $1$, and $\GL_3(\ZZ)$-conjugacy and $\SL_3(\ZZ)$-conjugacy define the same equivalence relation on $\mathcal M_t$. Moreover,
    \[
        \#\bigl(\mathcal M_t/\SL_3(\ZZ)\bigr)
        =
        \begin{cases}
            2h(\OO_t),       & (t,N_t)=(3,3)\text{ or }(5,7), \\
            (N_t+1)h(\OO_t), & \text{otherwise}.
        \end{cases}
    \]
    For each $[\mathfrak a]\in\Pic(\OO_t)$, choose an integral representative $\mathfrak a$ coprime to $\mathfrak f_t$ and put $\mathfrak j=\mathfrak a\cap R_t$ as in \Cref{thm:explicit-picard-fibers}. Under the matrix correspondence, the conjugacy classes associated with the invertible part of $\pi_t^{-1}([\mathfrak a])$ are indexed by the distinct classes among
    \[
        \{[I_k\mathfrak j]:k\in\FF_{N_t}\}.
    \]
    The remaining class corresponds to $\mathfrak a$ as a noninvertible $R_t$-ideal.
\end{corollary}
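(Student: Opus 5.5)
The plan is to use the Latimer--MacDuffee--Taussky correspondence. Since $g_t$ is irreducible with root $\theta_t$, $\GL_3(\ZZ)$-conjugacy classes in $\mathcal M_t$ correspond bijectively to isomorphism classes of full $\ZZ[\theta_t]=R_t$-lattices in $K_t$. Equivalently, they correspond to $K_t^\times$-homothety classes of full fractional $R_t$-ideals. By Marseglia's equivalence, this set is $\ICM(R_t)$.

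The correspondence goes as follows. A matrix $M$ makes $\ZZ^3$ into a $\ZZ[X]/(g_t)\cong R_t$-module that is torsion-free of rank one, hence a fractional ideal. Conversely, an ideal with a chosen $\ZZ$-basis gives the matrix of multiplication by $\theta_t$. A change of basis or a homothety changes the matrix by $\GL_3(\ZZ)$-conjugation. The determinant claim is immediate: $\det M=-g_t(0)=1$.

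For the comparison of $\GL_3$- and $\SL_3$-conjugacy, I would show that each $M$ commutes with some element of $\GL_3(\ZZ)$ of determinant $-1$. Take $-\operatorname{Id}_3$: it commutes with everything and has determinant $(-1)^3=-1$. So if $M'=PMP^{-1}$ with $\det P=-1$, replacing $P$ by $-P$ gives an $\SL_3(\ZZ)$-conjugation. Hence the two relations agree.

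The count then follows from the cardinality of $\ICM(R_t)$ in \Cref{cor:cardinalities}. For the fiber description, the conjugacy classes over $[\mathfrak a]$ match the fiber $\pi_t^{-1}([\mathfrak a])$ from \Cref{thm:explicit-fibers}. Its invertible part is the set of classes $[I_k\mathfrak j]$, and these are distinct except in the two exceptional cases, where they all coincide, by \Cref{thm:explicit-picard-fibers}. The one remaining class is $[\mathfrak a]_{R_t}$. This class is noninvertible as an $R_t$-ideal because its multiplicator ring is $\OO_t\ne R_t$.

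The only delicate step is making the Latimer--MacDuffee correspondence precise. It needs $\ZZ[X]/(g_t)\cong R_t$, together with the fact that matrix conjugacy classes correspond to module isomorphism classes rather than to ideal classes up to some coarser relation. Once that is in place, the identification with $\ICM(R_t)$ via Marseglia's Corollary~3.4 makes the rest bookkeeping.
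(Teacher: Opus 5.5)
Your proposal is correct and follows essentially the same route as the paper. It uses the generalized Latimer--MacDuffee correspondence (Marseglia) to identify $\GL_3(\ZZ)$-classes in $\mathcal M_t$ with $\ICM(R_t)$, reads $\det M=1$ off the constant term of $g_t$, and replaces $P$ by $-P$ to pass from $\GL_3(\ZZ)$- to $\SL_3(\ZZ)$-conjugacy. It then takes the count and the fibers directly from \Cref{cor:cardinalities}, \Cref{thm:explicit-picard-fibers}, and \Cref{thm:explicit-fibers}.
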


\begin{proof}
    The polynomial $g_t$ is irreducible, hence squarefree, and evaluation at $\theta_t$ induces $R_t\cong\ZZ[X]/(g_t)$. For $M\in\mathcal M_t$, its minimal polynomial is a nonconstant divisor of the irreducible $g_t$, hence equals $g_t$. The generalized Latimer--MacDuffee correspondence \cite[Corollary~8.2(a)]{Marseglia2020} therefore identifies $\ICM(R_t)$ with the $\GL_3(\ZZ)$-conjugacy classes in $\mathcal M_t$; see also Conrad's theorem \cite[Theorem~2.1, pp.~2--5]{ConradIdealClasses}.

    On a full $R_t$-lattice $\mathfrak a\subset K_t$, a $\ZZ$-basis of $\mathfrak a$ gives the multiplication matrix of $\theta_t$. Its determinant is $\Norm_{K_t/\QQ}(\theta_t)=1$, since the constant term of $g_t$ is $-1$.

    If a conjugating matrix $C\in\GL_3(\ZZ)$ has determinant $-1$, then
    \[
        \det(-C)=(-1)^3\det(C)=1,
    \]
    and $-C$ induces the same conjugation. Thus the two conjugacy relations coincide.

    The count follows from \Cref{cor:cardinalities}, and the fibers from \Cref{thm:explicit-fibers}. Here invertible and noninvertible refer to $R_t$-ideals.
\end{proof}

Wallace's correspondence \cite[Theorem~4, pp.~182--183]{Wallace1984} applies because $g_t$ is irreducible with three distinct real roots and constant term $-1$. Under her definition \cite[Definition~2, p.~181]{Wallace1984}, ``narrow ideal classes'' are homothety classes modulo scalars of positive $K_t/\QQ$-norm. The standard narrow class group uses totally positive scalars.

Since $[K_t:\QQ]=3$, if $\mathfrak j=xI$ and $\Norm_{K_t/\QQ}(x)<0$, then $\mathfrak j=(-x)I$ and $\Norm_{K_t/\QQ}(-x)>0$. This is Wallace's odd-degree corollary \cite[Corollary~1, p.~183]{Wallace1984}: her narrow and wide classes coincide in this setting. Conrad's remark \cite[Remark~2.7]{ConradIdealClasses} contains the same odd-degree sign argument.

\bibliographystyle{alpha}
\bibliography{references}

\end{document}